\tikzset{snake it/.style={decorate, decoration=snake}}
\theoremstyle{plain}
\newtheorem{thm}{Theorem}[section]
\newtheorem{cor}[thm]{Corollary}
\newtheorem{lem}[thm]{Lemma}
\newtheorem{prop}[thm]{Proposition}
\newtheorem{question}[thm]{Question}
\theoremstyle{definition}
\theoremstyle{remark}
\newtheorem{rmk}[thm]{Remark}
\newcommand{\BC}{{\mathbb{C}}}
\newcommand{\BD}{{\mathbb{D}}}
\newcommand{\BP}{{\mathbb{P}}}
\newcommand{\BQ}{{\mathbb{Q}}}
\newcommand{\BZ}{{\mathbb{Z}}}
\newcommand{\CA}{{\mathcal A}}
\newcommand{\CC}{{\mathcal C}}
\newcommand{\CE}{{\mathcal E}}
\newcommand{\CF}{{\mathcal F}}
\newcommand{\CG}{{\mathcal G}}
\newcommand{\CI}{{\mathcal I}}
\newcommand{\CL}{{\mathcal L}}
\newcommand{\CM}{{\mathcal M}}
\newcommand{\CO}{{\mathcal O}}
\newcommand{\CT}{{\mathcal T}}
\DeclareFontFamily{OT1}{rsfs}{}
\DeclareFontShape{OT1}{rsfs}{n}{it}{<-> rsfs10}{}
\DeclareMathAlphabet{\curly}{OT1}{rsfs}{n}{it}
\begin{document}
\title[Derived categories, O'Grady's filtration, and zero-cycles]{Derived categories of $K3$ surfaces, O'Grady's filtration, and zero-cycles on holomorphic symplectic varieties}
\date{\today}

\author{Junliang Shen}
\address{ETH Z\"urich, Department of Mathematics}
\email{junliang.shen@math.ethz.ch}

\author{Qizheng Yin}
\address{Peking University, Beijing International Center for Mathematical Research}
\email{yinqizheng@math.pku.edu.cn}

\author{Xiaolei Zhao}
\address{Northeastern University, Department of Mathematics}
\email{x.zhao@neu.edu}

\begin{abstract}
Moduli spaces of stable objects in the derived category of a $K3$ surface provide a large class of holomorphic symplectic varieties. In this paper, we study the interplay between Chern classes of stable objects and  zero-cycles on holomorphic symplectic varieties which arise as moduli spaces.

First, we show that the second Chern class of any object in the derived category lies in a suitable piece of O'Grady's filtration on the $\mathrm{CH}_0$-group of the $K3$ surface. This solves a conjecture of O'Grady and improves on previous results of Huybrechts, O'Grady, and Voisin. Then we propose a candidate of the Beauville--Voisin filtration on the $\mathrm{CH}_0$-group of the moduli space of stable objects. We discuss its connection with Voisin's recent proposal via constant cycle subvarieties. In particular, we deduce the existence of algebraic coisotropic subvarieties in the moduli space.

Further, for a generic cubic fourfold containing a plane, we establish a connection between zero-cycles on the Fano variety of lines and on the associated $K3$ surface.


\end{abstract}

\baselineskip=14.5pt
\maketitle

\setcounter{tocdepth}{1} 

\tableofcontents
\setcounter{section}{-1}

\section{Introduction}

The purpose of this paper is twofold. On one hand, we study objects in the derived category of a $K3$ surface and their Chern classes. We locate the second Chern classes in the $\mathrm{CH}_0$-group of the $K3$ surface with respect to a filtration introduced by O'Grady, proving and generalizing a conjecture of his. On the other hand, we consider $0$-cycles on holomorphic symplectic varieties which arise as moduli spaces in the derived category. We search for a filtration envisioned by Beauville and Voisin on the $\mathrm{CH}_0$-group of the moduli space, suggesting that it should come from the derived category.

Aspects of derived categories, moduli spaces, and algebraic cycles are brought together.

\subsection{Zero-cycles on $K3$ surfaces}

Let $X$ be a nonsingular projective $K3$ surface. In \cite{BV}, Beauville and Voisin proved that $X$ carries a canonical $0$-cycle class of degree $1$,
\[ [o_X] \in \mathrm{CH}_0(X), \]
where $o_X$ can be taken any point lying on a rational curve in $X$. It has the remarkable property that all intersections of divisor classes in $X$, as well as the second Chern class of $X$, lie in $\mathbb{Z} \cdot [o_X]$.

In \cite{OG2}, O'Grady introduced an increasing filtration $S_\bullet(X)$ on $\mathrm{CH}_0(X)$,
\[S_0(X) \subset S_1(X) \subset \cdots \subset S_i(X) \subset \cdots \subset \mathrm{CH}_0(X),\]
where $S_i(X)$ is the union of $[z] + \mathbb{Z} \cdot [o_X]$ for all effective $0$-cycles $z$ of degree~$i$. In particular, we have
\[S_0(X) = \mathbb{Z} \cdot [o_X].\]
An alternative characterization of $S_\bullet(X)$ via effective orbits is given by Voisin in \cite{V1}.

\subsection{Derived categories}
Let $D^b(X)$ denote the bounded derived category of coherent sheaves on $X$. Given an object $\mathcal{E} \in D^b(X)$, we write
\[v(\mathcal{E}) \in H^0(X, \mathbb{Z}) \oplus H^2(X, \mathbb{Z}) \oplus H^4(X, \mathbb{Z})\]
for the Mukai vector of $\mathcal{E}$, and define
\[d(\mathcal{E}) = \frac{1}{2}\dim \mathrm{Ext}^1(\mathcal{E}, \mathcal{E}) \in \mathbb{Z}_{\geq 0}.\]

An interesting link between the second Chern classes of objects in $D^b(X)$ and the filtration $S_\bullet(X)$ was discovered by Huybrechts and O'Grady. In~\cite{Huy}, \mbox{Huybrechts} showed under certain assumptions\footnote{The assumptions (on the Picard rank of $X$ or on the Mukai vector $v(\mathcal{E})$) were subsequently removed following \cite{V1}.} that if $\mathcal{E} \in D^b(X)$ is a spherical object (and hence $d(\mathcal{E}) = 0$), then
\[c_2(\mathcal{E}) \in \mathbb{Z} \cdot [o_X].\]
Later, O'Grady conjectured\footnote{The statement in \cite[Conjecture 0.4]{OG2} takes a slightly stronger form. However, as is shown in \cite[Proposition 1.3]{OG2}, it is equivalent to what is stated here.} in \cite{OG2} that if $\mathcal{E}$ is a Gieseker-stable sheaf with respect to a polarization $H$ on $X$, then
\[c_2(\mathcal{E}) \in S_{d(\mathcal{E})}(X).\]
He verified the conjecture again under certain assumptions on the Picard rank of $X$ and/or on the Mukai vector $v(\mathcal{E})$. Further, in \cite{V1}, Voisin proved (a generalization of) the conjecture for any simple vector bundle $\mathcal{E}$ on $X$.

Our first result completes the proof of O'Grady's conjecture and generalizes it to arbitrary objects in $D^b(X)$.

\begin{thm} \label{mainthm}
For any object $\mathcal{E} \in D^b(X)$, we have
\[c_2(\mathcal{E}) \in S_{d(\mathcal{E})}(X).\]
\end{thm}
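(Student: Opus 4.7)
The plan is to lift Voisin's theorem for simple sheaves to all of $D^b(X)$ by using Bridgeland stability conditions on $D^b(X)$ together with the wall-crossing formalism of Bayer--Macr\`i, which relates moduli of Bridgeland-stable complexes to Gieseker moduli of stable sheaves.

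First I would reduce to $\sigma$-stable objects. Fix a generic Bridgeland stability condition $\sigma$ on $D^b(X)$ and decompose $\mathcal{E}$ by Harder--Narasimhan followed by Jordan--H\"older into $\sigma$-stable factors $\mathcal{E}_1,\dots,\mathcal{E}_k$. Since the Chern character is additive on distinguished triangles, $c_2(\mathcal{E})$ equals $\sum_i c_2(\mathcal{E}_i)$ minus a sum of pairwise intersections $c_1(\mathcal{E}_i)\cdot c_1(\mathcal{E}_j)$, the latter lying in $\mathbb{Z}\cdot [o_X]\subset S_0(X)$ by Beauville--Voisin. Using $S_a(X)+S_b(X)\subset S_{a+b}(X)$, the theorem reduces to bounding each $c_2(\mathcal{E}_i)\in S_{d(\mathcal{E}_i)}(X)$, provided $\sum_i d(\mathcal{E}_i)\le d(\mathcal{E})$. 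The last inequality follows by expanding $\mathrm{Ext}^1(\mathcal{E},\mathcal{E})$ block-diagonally: the cross terms $\mathrm{Ext}^1(\mathcal{E}_i,\mathcal{E}_j)$ with $i\ne j$ contribute nonnegatively to $d(\mathcal{E})$.

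For a $\sigma$-stable $\mathcal{E}$ with primitive Mukai vector $v$, the Bridgeland moduli $M_\sigma(v)$ is a smooth projective hyperk\"ahler variety of dimension $2d(\mathcal{E})$. By Bayer--Macr\`i, a sequence of wall-crossings, combined if necessary with a derived autoequivalence $\Phi$, realizes $M_\sigma(v)$ as birational to a Gieseker moduli $M_H(\Phi(v))$ of simple sheaves, on which Voisin's theorem directly gives $c_2(\mathcal{F})\in S_{d(\mathcal{E})}(X)$ for every $\mathcal{F}\in M_H(\Phi(v))$. The task is then to transfer this bound back through the birational equivalence: using the (possibly twisted) universal family on $M_\sigma(v)\times X$ and Grothendieck--Riemann--Roch, one aims to show that the map $c_2 \colon M_\sigma(v) \to \mathrm{CH}_0(X)/\mathbb{Z}\cdot[o_X]$ takes values in a single coset of $S_{d(\mathcal{E})}(X)/\mathbb{Z}\cdot[o_X]$, determined by $v$. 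Imprimitive $v$ are then handled by decomposing $\sigma$-polystable factors and reapplying Step~1.

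The main obstacle is the wall-crossing transfer. The birational surgeries (Mukai flops, divisorial contractions) appearing across walls induce correspondences on $\mathrm{CH}_0(X)$ whose effect on $c_2$ must be controlled: one needs the induced fiber $0$-cycles on $X$ to be effective of degree at most $d(\mathcal{E})$, so that Voisin's orbit characterization of $S_\bullet(X)$ from \cite{V1} applies. An auxiliary difficulty is keeping the index bookkeeping in Step~1 sharp enough that the filtration level is not inflated beyond $d(\mathcal{E})$ when the stable factors have large pairwise $\mathrm{Ext}^1$.
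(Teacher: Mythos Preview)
Your wall-crossing transfer is circular. Granting that Bayer--Macr\`i produce a derived autoequivalence $\Phi$ and an open immersion of $U\subset M_\sigma(v)$ into a Gieseker moduli space, you would know at best that $c_2(\Phi(\mathcal{E}))\in S_{d(\mathcal{E})}(X)$ for $\mathcal{E}\in U$; to get $c_2(\mathcal{E})\in S_{d(\mathcal{E})}(X)$ you must know that $\Phi^{\mathrm{CH}}$ preserves the filtration $S_\bullet$, which is exactly Corollary~\ref{cortomainthm} --- proved in the paper \emph{as a consequence} of Theorem~\ref{mainthm}. Your alternative, that $c_2$ lands in ``a single coset of $S_{d(\mathcal{E})}(X)/\mathbb{Z}\cdot[o_X]$'', is not well-posed: $c_2$ varies over the moduli space and $S_d(X)$ is not a coset. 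A separate gap is that Voisin's input (Theorem~\ref{Voisin}) is only for simple \emph{vector bundles}; the statement for Gieseker-stable sheaves is O'Grady's conjecture itself and cannot be assumed.

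Your reduction step is also not justified: the inequality $\sum_i d(\mathcal{E}_i)\le d(\mathcal{E})$ does not come from any block-diagonal expansion of $\mathrm{Ext}^1$. The right tool is Mukai's lemma (Lemma~\ref{MukLem}), but it needs $\mathrm{Hom}(\mathcal{F},\mathcal{G})=0$ across the triangle, which fails for repeated Jordan--H\"older factors. The paper circumvents this entirely and never uses Bridgeland stability: starting from Voisin's vector-bundle case, it passes to $\mu$-stable sheaves via double duals (Proposition~\ref{Prop11}), handles iterated self-extensions of a stable $\mathcal{F}$ by the crucial observation that $m\,c_2(\mathcal{F})\in S_{d(\mathcal{F})}(X)$ (Lemma~\ref{Lemma1}(ii)), builds a tailored splitting of $\mu$-semistable bundles (Proposition~\ref{splitting}) so that Mukai's lemma does apply, and then climbs to all torsion-free sheaves, all sheaves, and finally all complexes by Harder--Narasimhan and truncation.
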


Note that Theorem \ref{mainthm} does not involve any stability condition and should be viewed as a statement purely on the derived category $D^b(X)$. However, the proof uses (slope) stability and ultimately relies on Voisin's proof of the vector bundle case.

Theorem \ref{mainthm} has an important consequence. Let $\widetilde{S}_\bullet(X)$ be the extension of O'Grady's filtration to the Chow ring $\mathrm{CH}^*(X)$ by the trivial filtration on $\mathrm{CH}^0(X)$ and $\mathrm{CH}^1(X)$. In particular, we have
\[\widetilde{S}_0(X) = R^*(X)\]
which is the Beauville--Voisin ring of $X$ generated by divisor classes. Let
\[\Phi : D^b(X) \xrightarrow{\sim} D^b(X')\]
be a derived equivalence between two nonsingular projective $K3$ surfaces. It induces an isomorphism of (ungraded) Chow groups
\[\Phi^{\mathrm{CH}} : \mathrm{CH}^*(X) \xrightarrow{\sim} \mathrm{CH}^*(X').\]
We have the following generalization of Huybrechts' result in \cite{Huy} that $\Phi^{\mathrm{CH}}$ preserves the Beauville--Voisin ring.\footnote{As before, the assumption in \cite{Huy} on the Picard rank can be removed following \cite{V1}.}

\begin{cor} \label{cortomainthm}
The isomorphism $\Phi^{\mathrm{CH}}$ preserves O'Grady's filtration $\widetilde{S}_\bullet$.
\end{cor}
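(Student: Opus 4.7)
The plan is to realize $\widetilde{S}_i(X)$ as built from Mukai vectors of explicit objects with controlled Ext-dimension $d$, and then to combine Huybrechts' base case with Theorem~\ref{mainthm}. Since $\Phi^{\mathrm{CH}}$ is additive and $\widetilde{S}_i(X) = \mathrm{CH}^0(X) \oplus \mathrm{CH}^1(X) \oplus S_i(X)$, one can treat the summands separately. Huybrechts' theorem (valid without the Picard-rank assumption via \cite{V1}) gives $\Phi^{\mathrm{CH}}(\widetilde{S}_0(X)) \subseteq \widetilde{S}_0(X') \subseteq \widetilde{S}_i(X')$, which handles the $\mathrm{CH}^0$ and $\mathrm{CH}^1$ parts. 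Every class in $S_i(X)$ differs from some $[z]$, with $z$ effective of degree $i$, by a multiple of $[o_X]$, so the problem reduces to showing $\Phi^{\mathrm{CH}}([z]) \in \widetilde{S}_i(X')$.

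The key move is to choose a length-$i$ closed subscheme $Z \subset X$ with $[\mathcal{O}_Z] = [z]$ in $\mathrm{CH}_0(X)$ (for instance a disjoint union of curvilinear fat points with the prescribed lengths), and to work with the ideal sheaf $\mathcal{I}_Z \subset \mathcal{O}_X$ rather than with $\mathcal{O}_Z$ itself. A direct computation yields
\[
v(\mathcal{I}_Z) = (1,\, 0,\, [o_X] - [z]),
\]
and since $\mathcal{I}_Z$ is a rank-one torsion-free sheaf on a smooth surface it is simple, so Riemann--Roch for the Mukai pairing gives $\chi(\mathcal{I}_Z, \mathcal{I}_Z) = 2 - 2i$ and hence $d(\mathcal{I}_Z) = i$. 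As elements of $\mathrm{CH}^*(X)$ one has $[z] = v(\mathcal{O}_X) - v(\mathcal{I}_Z)$, whence
\[
\Phi^{\mathrm{CH}}([z]) = v(\Phi(\mathcal{O}_X)) - v(\Phi(\mathcal{I}_Z)).
\]

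For any $\mathcal{E} \in D^b(X')$ the codimension-two part of $v(\mathcal{E})$ equals $r(\mathcal{E})[o_{X'}] + \tfrac{1}{2} c_1(\mathcal{E})^2 - c_2(\mathcal{E})$, and Beauville--Voisin puts the first two summands into $\mathbb{Z} \cdot [o_{X'}]$. Since $\Phi$ is a derived equivalence it preserves $\mathrm{Ext}^1$, so $d(\Phi(\mathcal{O}_X)) = 0$ and $d(\Phi(\mathcal{I}_Z)) = i$; Theorem~\ref{mainthm} then places $c_2(\Phi(\mathcal{O}_X)) \in S_0(X')$ and $c_2(\Phi(\mathcal{I}_Z)) \in S_i(X')$. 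The codimension-two component of $\Phi^{\mathrm{CH}}([z])$ is therefore $c_2(\Phi(\mathcal{I}_Z)) - c_2(\Phi(\mathcal{O}_X))$ modulo $\mathbb{Z}\cdot [o_{X'}]$, which lies in $S_i(X')$; the codimension-zero and codimension-one parts are automatically in $\widetilde{S}_i(X')$. The subtle step -- and the reason for working with $\mathcal{I}_Z$ rather than $\mathcal{O}_Z$ -- is getting the sign right: $c_2$ enters the Mukai vector with a minus, and $S_i$ is not closed under negation in general, so if one tried to use $\mathcal{O}_Z$ directly Theorem~\ref{mainthm} would only guarantee that $-c_2(\Phi(\mathcal{O}_Z))$ lies in $-S_i(X')$. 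Taking $\mathcal{I}_Z$ and forming $v(\mathcal{O}_X) - v(\mathcal{I}_Z)$ introduces the required second minus sign and delivers the correct conclusion.
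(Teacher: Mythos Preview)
Your proof is correct and follows the same overall strategy as the paper: reduce via Huybrechts' theorem to classes of effective $0$-cycles $[z]$, realize $[z]$ through the Mukai vector of an explicit object with $d=i$, and apply Theorem~\ref{mainthm} on the other side.

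The difference is only in the choice of object. The paper takes the direct sum of skyscraper sheaves $\CE=\BC_{x_1}\oplus\cdots\oplus\BC_{x_i}$ at distinct points, notes that $d(\CE)=i$ and that $v^{\mathrm{CH}}(\CE)=(0,0,[z])$, and applies Theorem~\ref{mainthm} to $\Phi(\CE)$ in one stroke. You instead split $[z]=v(\CO_X)-v(\CI_Z)$ and apply Theorem~\ref{mainthm} to both $\Phi(\CO_X)$ and $\Phi(\CI_Z)$. Both work; the paper's version is a line shorter.

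One point of exposition is wrong, though it does not affect validity: your claim that ``$S_i$ is not closed under negation in general'' is false. Lemma~\ref{Lemma1}(ii) (O'Grady) states that $\alpha\in S_i(X)$ implies $m\alpha\in S_i(X)$ for every $m\in\BZ$, in particular for $m=-1$. So there is no sign subtlety to worry about, and the detour through $\CI_Z$ rather than $\CO_Z$ (or the skyscraper sum) is unnecessary. The paper in fact relies on this closure implicitly when passing from $c_2(\Phi(\CE))\in S_i(X')$ to $\Phi^{\mathrm{CH}}([z])\in\widetilde{S}_i(X')$.
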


The generality of Theorem \ref{mainthm} also suggests a natural increasing filtration on $D^b(X)$,
\[S_0(D^b(X)) \subset S_1(D^b(X)) \subset \cdots \subset S_i(D^b(X)) \subset \cdots \subset D^b(X),\]
where $S_i(D^b(X))$ consists of objects $\mathcal{E} \in D^b(X)$ with $c_2(\mathcal{E}) \in S_i(X)$. By Corollary~\ref{cortomainthm}, this filtration does not depend on the $K3$ surface $X$ and is ``intrinsic'' to the triangulated category $\mathbf{D} = D^b(X)$.

\subsection{Moduli spaces of stable objects}
The work of Bridgeland \cite{Br, Br1} and Bayer--Macr\`i \cite{BM1, BM} provides a large class of holomorphic symplectic varieties\footnote{They are also referred to as irreducible holomorphic symplectic varieties, or equivalently, hyper-K\"ahler varieties following \cite{B}. In this paper, we emphasize the holomorphic symplectic point of view.} of $K3^{[d]}$-type. They are given as moduli spaces of objects in $D^b(X)$ satisfying certain stability conditions.

Let
\[\mathbf{v} \in H^0(X, \mathbb{Z}) \oplus H^2(X, \mathbb{Z}) \oplus H^4(X, \mathbb{Z})\]
be a primitive algebraic class with Mukai self-intersection $\mathbf{v}^2 >0$. In \cite{Br1}, Bridgeland described a connected component $\mathrm{Stab}^{\dagger}(X)$ of the space of stability conditions on $D^b(X)$, which admits a chamber decomposition depending on~${\bf v}$. When $\sigma \in \mathrm{Stab}^{\dagger}(X)$ is a generic\footnote{The word ``generic'' means away from the walls.} stability condition with respect to~$\mathbf{v}$, there is a nonsingular projective moduli space $M_{\sigma}(\mathbf{v})$ of $\sigma$-stable \mbox{objects} $\mathcal{E} \in D^b(X)$ with Mukai vector $v (\mathcal{E}) = \mathbf{v}$. The moduli space $M_{\sigma}(\mathbf{v})$ only depends on the chamber containing $\sigma$. It is of dimension\footnote{By Riemann--Roch and Serre duality, we have $d(\mathbf{v}) = d(\mathcal{E})$ for any $\mathcal{E} \in M_\sigma(\mathbf{v})$.}
\[2d(\mathbf{v}) = \mathbf{v}^2 + 2 > 2\]
and is holomorphic symplectic by the pairing 
\[\mathrm{Ext}^1(\mathcal{E}, \mathcal{E}) \times \mathrm{Ext}^1(\mathcal{E}, \mathcal{E}) 
\rightarrow \mathrm{Ext}^2(\mathcal{E}, \mathcal{E})\xrightarrow{\mathrm{tr}} \mathbb{C}.\]
When $\sigma$ is in the chamber corresponding to the large volume limit, the moduli space $M_{\sigma}(\mathbf{v})$ recovers the moduli space of Gieseker-stable sheaves with respect to a generic polarization $H$ on $X$.

The following result relates the second Chern classes of objects in $M_{\sigma}(\mathbf{v})$ to the corresponding point classes on $M_{\sigma}(\mathbf{v})$. It was proposed as a conjecture in the first version of this paper, and later shown to hold by Marian and the third author.

\begin{thm}[\cite{MZ}] \label{mainconj} \label{Conj}
Two objects $\mathcal{E}, \mathcal{E}' \in M_{\sigma}(\mathbf{v})$ satisfy
\[[\mathcal{E}] = [\mathcal{E}'] \in \mathrm{CH}_0(M_{\sigma}(\mathbf{v}))\]
if and only if
\[c_2(\mathcal{E}) = c_2(\mathcal{E}') \in \mathrm{CH}_0(X).\]
\end{thm}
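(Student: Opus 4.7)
\emph{Forward direction.} Any (quasi-)universal family $\mathcal{U}$ on $M_\sigma(\mathbf{v}) \times X$ produces a correspondence class $c_2(\mathcal{U}) \in \mathrm{CH}^2(M_\sigma(\mathbf{v}) \times X)$, and the associated cylinder map
\[\mathrm{CH}_0(M_\sigma(\mathbf{v})) \to \mathrm{CH}_0(X), \qquad [\mathcal{E}] \mapsto c_2(\mathcal{E}),\]
factors through rational equivalence on $M_\sigma(\mathbf{v})$. This immediately yields the implication.

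\emph{Converse, reduction step.} The plan is to reduce the problem to the case where the moduli space is (birational to) a Hilbert scheme of points on a $K3$ surface. By work of Mukai and Yoshioka combined with Bayer--Macr\`i's wall-crossing, for any primitive $\mathbf{v}$ with $\mathbf{v}^2>0$ there exists a (possibly twisted) $K3$ surface $Y$ and a Fourier--Mukai equivalence $\Phi: D^b(X) \xrightarrow{\sim} D^b(Y)$ sending $\mathbf{v}$ to the ideal-sheaf Mukai vector $(1,0,1-d(\mathbf{v}))$, so that $M_\sigma(\mathbf{v})$ becomes birational to $Y^{[d(\mathbf{v})]}$. A Grothendieck--Riemann--Roch computation on the kernel of $\Phi$ gives $\Phi^{\mathrm{CH}}(v(\mathcal{E})) = v(\Phi(\mathcal{E}))$ in $\mathrm{CH}^*(Y)$. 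For $\mathcal{E}, \mathcal{E}' \in M_\sigma(\mathbf{v})$, the rank and $c_1$ components of $v(\mathcal{E})-v(\mathcal{E}')$ vanish in Chow (the $c_1$ part because $\mathrm{Pic}(X) \hookrightarrow H^2(X,\mathbb{Z})$ on a $K3$), so $v(\mathcal{E})-v(\mathcal{E}')$ reduces to $-(c_2(\mathcal{E})-c_2(\mathcal{E}'))$ in the degree-zero part of $\mathrm{CH}_0(X)$. Hence $\Phi^{\mathrm{CH}}$ transports the hypothesis $c_2(\mathcal{E})=c_2(\mathcal{E}')$ on $X$ to the analogous equality $c_2(\Phi\mathcal{E})=c_2(\Phi\mathcal{E}')$ on $Y$.

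\emph{Converse, Hilbert scheme case.} It remains to prove that $[I_Z]=[I_{Z'}] \in \mathrm{CH}_0(Y^{[n]})$ if and only if $[Z]=[Z'] \in \mathrm{CH}_0(Y)$. The ``only if'' follows by pushforward along the Hilbert--Chow morphism $Y^{[n]} \to \mathrm{Sym}^n(Y)$ composed with the summation map to $\mathrm{CH}_0(Y)$. For the ``if'' direction, two subschemes supported on the same $0$-cycle lie in a common fibre of Hilbert--Chow; these fibres are products of punctual Hilbert schemes, hence rationally connected, so they contribute trivially to $\mathrm{CH}_0(Y^{[n]})$. Combined with the previous paragraph, this completes the proof.

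\emph{Main obstacle.} The heart of the argument lies in the reduction step: realising $M_\sigma(\mathbf{v})$ as birational to a Hilbert scheme via a suitable derived equivalence. This requires the full classification of birational models of $M_\sigma(\mathbf{v})$ from Bayer--Macr\`i, together with Rie{\ss}'s theorem that $\mathrm{CH}_0$ is a birational invariant of holomorphic symplectic manifolds (to pass from a birational to a set-theoretic identification). If a nontrivial Brauer class $\alpha$ on $Y$ is necessary, one must also run the Hilbert--Chow argument for twisted rank-one sheaves on $(Y,\alpha)$, introducing further bookkeeping in the GRR identification $\Phi^{\mathrm{CH}}(v) = v \circ \Phi$.
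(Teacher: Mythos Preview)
Your argument for the Hilbert scheme case contains a genuine error. You write that ``two subschemes supported on the same $0$-cycle lie in a common fibre of Hilbert--Chow''. But the hypothesis $c_2(\CI_Z)=c_2(\CI_{Z'})$ means only that the effective $0$-cycles $[Z]$ and $[Z']$ are \emph{rationally equivalent} on $Y$; it does \emph{not} say that $Z$ and $Z'$ have the same image in $Y^{(n)}$. Already for $n=1$ this is visible: $Y^{[1]}=Y$, and two distinct points $p,q\in Y$ can satisfy $[p]=[q]\in\mathrm{CH}_0(Y)$ (e.g.\ both lie on a rational curve) while having different images in $Y^{(1)}=Y$. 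So the rational connectedness of Hilbert--Chow fibres is irrelevant here, and the ``if'' direction for $Y^{[n]}$ remains entirely unproven.

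The actual argument in this case is considerably more delicate. One uses the de~Cataldo--Migliorini Chow--K\"unneth decomposition of $\Fh(Y^{[n]})$, built from the projectors $\pi_Y^0,\pi_Y^2,\pi_Y^4$ attached to the canonical class $[o_Y]$, together with the Beauville--Voisin relation $([x]-[o_Y])^{\times 2}=0$ in $\mathrm{CH}_0(Y\times Y)$. These combine to show that for a point $\mathbf{x}=\sum x_i\in Y^{(n)}$, each graded piece $[\overline{\mathbf{x}}]_{(2i)}$ is a symmetrisation of $\gamma_{\mathbf{x}}^{\times i}\times[o_Y]^{\times(n-i)}$ where $\gamma_{\mathbf{x}}=\sum([x_i]-[o_Y])$; hence $\gamma_{\mathbf{x}}=\gamma_{\mathbf{x}'}$ forces $[\mathbf{x}]=[\mathbf{x}']$. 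This is precisely what bridges rational equivalence on $Y$ with rational equivalence on $Y^{[n]}$, and it genuinely uses that $Y$ is a $K3$ surface.

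Your reduction step is also incomplete, as you yourself flag: not every $M_\sigma(\mathbf{v})$ is birational to an untwisted Hilbert scheme, and the twisted case cannot be handled by the same fibre argument (which, as noted, fails anyway). The paper in fact does not prove this theorem at all; it was stated as a conjecture in the first version, the Hilbert-scheme case was established there by the motivic method sketched above, and the general statement is cited from Marian--Zhao~\cite{MZ}, whose proof proceeds by a different route using the diagonal of $M_\sigma(\mathbf{v})\times M_\sigma(\mathbf{v})$ and Markman's results on monodromy-invariant classes.
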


\subsection{Beauville--Voisin filtration for zero-cycles}

Our study of $0$-cycles on the moduli spaces $M_{\sigma}(\mathbf{v})$ is motivated by the Beauville--Voisin conjecture for holomorphic symplectic varieties. The conjecture predicts that the Chow ring (with rational coefficients) of a holomorphic symplectic variety admits a multiplicative decomposition; see \cite{B2, V, V2}. Another way to phrase it is the existence of a new filtration on the Chow ring which is opposite to the conjectural Bloch--Beilinson filtration. Recently, rather than proving consequences of the Beauville--Voisin conjecture, much effort has been put to construct this new filtration, which we shall call the Beauville--Voisin filtration.

In the case of a moduli space $M_\sigma(\mathbf{v})$, our previous discussion suggests a natural candidate of the Beauville--Voisin filtration on the Chow group $\mathrm{CH}_0(M_\sigma(\mathbf{v}))$ of $0$-cycles. It is simply given by the restriction of the filtration $S_\bullet(\mathbf{D})$ to $\mathrm{CH}_0(M_\sigma(\mathbf{v}))$. More concretely, we have an increasing filtration
\begin{multline*}
S_0\mathrm{CH}_0(M_\sigma(\mathbf{v})) \subset S_1\mathrm{CH}_0(M_\sigma(\mathbf{v})) \subset \cdots \\
\subset S_i\mathrm{CH}_0(M_\sigma(\mathbf{v})) \subset \cdots \subset \mathrm{CH}_0(M_\sigma(\mathbf{v})),
\end{multline*}
where $S_i\mathrm{CH}_0(M_\sigma(\mathbf{v}))$ is the subgroup spanned by $[\mathcal{E}] \in \mathrm{CH}_0(M_\sigma(\mathbf{v}))$ for all $\mathcal{E} \in M_\sigma(\mathbf{v})$ with $c_2(\mathcal{E}) \in S_i(X)$.

An immediate consequence of Theorem \ref{mainthm} is
\[S_{d(\mathbf{v})}\mathrm{CH}_0(M_\sigma(\mathbf{v})) = \mathrm{CH}_0(M_\sigma(\mathbf{v})) \]
where $2d(\mathbf{v}) = \mathbf{v}^2 + 2$ is the dimension of $M_\sigma(\mathbf{v})$. Moreover, by an argument in \cite{OG2}, the subset
\[\{c_2(\mathcal{E}) : \mathcal{E} \in M_\sigma(\mathbf{v})\} \subset S_{d(\mathbf{v})}(X)\]
equals the full subset of $S_{d(\mathbf{v})}(X)$ of the given degree. In particular, we have
\[S_0\mathrm{CH}_0(M_\sigma(\mathbf{v})) \neq 0.\]
Further, since $S_0(X) = \mathbb{Z} \cdot [o_X]$, Theorem \ref{mainconj} implies that
\[S_0\mathrm{CH}_0(M_\sigma(\mathbf{v})) \simeq \mathbb{Z}.\]
In other words, the moduli space $M_\sigma(\mathbf{v})$ carries a canonical $0$-cycle class of degree $1$, which matches the predictions of the Beauville--Voisin conjecture.


We also show that the filtration $S_\bullet\mathrm{CH}_0$ is independent of birational models or modular interpretations. Hence $S_\bullet\mathrm{CH}_0$ is ``intrinsic'' to $M = M_{\sigma}(\mathbf{v})$ as a moduli space of stable objects in the triangulated category $\mathbf{D} = D^b(X)$.

\begin{prop} \label{presfil}
For any $(X', \sigma', \mathbf{v}')$ such that $M_{\sigma'}(\mathbf{v}')$ is birational to $M_{\sigma}(\mathbf{v})$, the canonical isomorphism of Chow groups
\[\mathrm{CH}_0(M_{\sigma}(\mathbf{v})) \xrightarrow{\sim} \mathrm{CH}_0(M_{\sigma'}(\mathbf{v}'))\]
preserves the filtration $S_\bullet\mathrm{CH}_0$.
\end{prop}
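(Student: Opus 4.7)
My plan is to combine Theorem \ref{Conj}, Corollary \ref{cortomainthm}, and the wall-crossing theory of Bayer--Macr\`i. By the latter, any triple $(X', \sigma', \mathbf{v}')$ with $M_{\sigma'}(\mathbf{v}')$ birational to $M_\sigma(\mathbf{v})$ can be reached through a finite chain of wall-crossings, each realized by a derived equivalence $\Phi : D^b(X) \xrightarrow{\sim} D^b(X')$ (possibly with $X = X'$ and $\Phi = \Id$) that identifies a dense open subset $U \subseteq M_\sigma(\mathbf{v})$ with its image in $M_{\sigma'}(\mathbf{v}')$ via $\mathcal{E} \mapsto \Phi(\mathcal{E})$. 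The first step is to check that the canonical isomorphism $\mathrm{CH}_0(M_\sigma(\mathbf{v})) \xrightarrow{\sim} \mathrm{CH}_0(M_{\sigma'}(\mathbf{v}'))$ agrees with this modular identification on $U$; this uses that the complement of $U$ has codimension at least two, together with standard results on $\mathrm{CH}_0$ of birational holomorphic symplectic varieties.

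Once the identification is in place, the next step is to translate the filtration condition from $c_2$ to the Mukai vector. For any object $\mathcal{E}$ of rank $r$ on $X$, the identity $v_2(\mathcal{E}) = \mathrm{ch}_2(\mathcal{E}) + r[o_X]$ together with $\mathrm{ch}_2(\mathcal{E}) = \frac{1}{2} c_1(\mathcal{E})^2 - c_2(\mathcal{E})$ shows that $c_2(\mathcal{E})$ and $-v_2(\mathcal{E})$ differ by an element of the Beauville--Voisin ring $R^*(X) = \widetilde{S}_0(X)$. Consequently
\[c_2(\mathcal{E}) \in S_i(X) \iff v_2(\mathcal{E}) \in S_i(X),\]
and the analogous equivalence holds on $X'$. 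Since the Mukai vector is compatible with the Fourier--Mukai action, $v(\Phi(\mathcal{E})) = \Phi^{\mathrm{CH}}(v(\mathcal{E}))$, and Corollary \ref{cortomainthm} then yields
\[v_2(\mathcal{E}) \in S_i(X) \iff v_2(\Phi(\mathcal{E})) \in S_i(X').\]
Combining these, I obtain $c_2(\mathcal{E}) \in S_i(X)$ if and only if $c_2(\Phi(\mathcal{E})) \in S_i(X')$, so every generator $[\mathcal{E}]$ of $S_i \mathrm{CH}_0(M_\sigma(\mathbf{v}))$ with $\mathcal{E} \in U$ maps to a generator of $S_i \mathrm{CH}_0(M_{\sigma'}(\mathbf{v}'))$.

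The final step is to extend from $U$ to all of $M_\sigma(\mathbf{v})$. Here I would invoke Theorem \ref{Conj}: the class $[\mathcal{E}] \in \mathrm{CH}_0(M_\sigma(\mathbf{v}))$ depends only on $c_2(\mathcal{E}) \in \mathrm{CH}_0(X)$, so any generator of $S_i\mathrm{CH}_0(M_\sigma(\mathbf{v}))$ may be represented by the class of some $\mathcal{E}' \in U$ with the same $c_2$--class, provided the corresponding constant--cycle fiber meets $U$. For $i < d(\mathbf{v})$ these fibers are positive-dimensional while the complement of $U$ has codimension at least two, so this can be arranged; the case $i = d(\mathbf{v})$ is trivial since $S_{d(\mathbf{v})}\mathrm{CH}_0$ equals the whole group by Theorem \ref{mainthm}. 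Symmetry in $(\sigma, \mathbf{v})$ and $(\sigma', \mathbf{v}')$ then gives the desired preservation of $S_\bullet\mathrm{CH}_0$. In my view the main obstacle is the very first step --- pinning down the compatibility between the canonical Chow isomorphism for birational holomorphic symplectic varieties and the derived-categorical identification provided by wall-crossing; all subsequent steps are direct consequences of the theorems stated in the introduction.
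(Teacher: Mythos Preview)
Your overall strategy --- Bayer--Macr\`i plus Corollary \ref{cortomainthm} plus the translation between $c_2$ and the Mukai vector --- matches the paper's, and your ``first step'' is not the obstacle you think it is: the canonical isomorphism on $\mathrm{CH}_0$ is just the pushforward along the birational map $\Sigma$, which on $U$ is literally $\mathcal{E} \mapsto \Phi(\mathcal{E})$.

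The genuine gap is in your final step. Your dimension count does not work: a $(d-i)$-dimensional constant-cycle locus can perfectly well sit inside a codimension-$2$ subvariety of a $2d$-fold, so ``positive-dimensional fiber'' plus ``codimension $\geq 2$ complement'' does not force the fiber to meet $U$. What you would actually need is Zariski density of the orbit $O_{\mathcal{E}}$ in $M$, and that is precisely the open question flagged in Remark~\ref{densi}. So as written, the extension from $U$ to all of $M_\sigma(\mathbf{v})$ fails.

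The paper circumvents this entirely, and without invoking Theorem~\ref{Conj}. The key observation is that the (quasi-)universal family makes $v^{\mathrm{CH}}(\mathcal{E}) \in \mathrm{CH}^*(X)$ depend only on the class $[\mathcal{E}] \in \mathrm{CH}_0(M)$, via the correspondence on $M \times X$. One writes $[\mathcal{E}] = \sum_j a_j [\mathcal{E}_j]$ with $\mathcal{E}_j \in U$ (possible since $U$ is open dense), pushes forward by $\Sigma$, and uses the universal family on the other side to conclude that any $\mathcal{F} \in M_{\sigma'}(\mathbf{v}')$ with $[\mathcal{F}] = \Sigma_*([\mathcal{E}])$ satisfies $v^{\mathrm{CH}}(\mathcal{F}) = \Phi^{\mathrm{CH}}(v^{\mathrm{CH}}(\mathcal{E}))$ (this is Lemma~\ref{funct}). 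Corollary~\ref{cortomainthm} then gives $c_2(\mathcal{F}) \in S_i(X')$ directly. The existence of such a point $\mathcal{F}$ is supplied by the elementary Lemma~\ref{point} (pushforward along a birational map sends point classes to point classes). No appeal to Theorem~\ref{Conj}, no density of orbits, no dimension counts.
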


In \cite{V2}, Voisin proposed a filtration on $\mathrm{CH}_0(M)$ for any holomorphic symplectic variety $M$ of dimension $2d$. Given a (closed) point $x \in M$, consider the orbit of $x$ under rational equivalence
\[O_x = \{x' \in M : [x] = [x'] \in \mathrm{CH}_0(M)\}.\]
It is a countable union of constant cycle subvarieties.\footnote{A constant cycle subvariety is a subvariety whose points all share the same class in the $\mathrm{CH}_0$-group of the ambient variety; see \cite{H2, V2}.} We write $\dim O_x$ for the maximal dimension of these subvarieties. There is an increasing~filtration
\begin{multline*}
S_0^V\mathrm{CH}_0(M) \subset S_1^V\mathrm{CH}_0(M) \subset \cdots \subset S_i^V\mathrm{CH}_0(M) \\
\subset \cdots \subset S_d^V\mathrm{CH}_0(M) = \mathrm{CH}_0(M),
\end{multline*}
where $S_i^V\mathrm{CH}_0(M)$ is the subgroup spanned by $[x] \in \mathrm{CH}_0(M)$ for all $x \in M$ with $\dim O_x \geq d - i$. Many questions around the filtration $S_\bullet^V\mathrm{CH}_0(M)$ remain open, among which the existence of algebraic coisotropic subvarieties
\[\begin{tikzcd}
Z_i \arrow[hook]{r}{} \arrow[dashed]{d}{q} & M, \\
B_i
\end{tikzcd}\]
where $Z_i$ is a subvariety of codimension $i$ and the general fibers of $q$ are constant cycle subvarieties (of $M$) of dimension $i$.

The following result constructs algebraic coisotropic varieties and connects the filtrations $S_\bullet\mathrm{CH}_0(M)$ and $S_\bullet^V\mathrm{CH}_0(M)$ in case $M = M_\sigma(\mathbf{v})$.

\begin{thm} \label{coiso}
For $0 \leq i \leq d = d(\mathbf{v})$, 
\begin{enumerate}
\item[(i)] there exists an algebraic coisotropic subvariety $Z_i \dashrightarrow B_i$ of codimension $i$ with constant cycle general fibers;
\item[(ii)] we have $S_i\mathrm{CH}_0(M) \subset S_i^V\mathrm{CH}_0(M)$.
\end{enumerate}
\end{thm}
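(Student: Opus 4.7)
My plan is to build, for each $0 \leq i \leq d$, an explicit $(2d-i)$-dimensional algebraic subvariety $Z_i \subset M$ with a rational fibration whose general fibers are constant cycle subvarieties of dimension $i$, then deduce (ii) as an immediate corollary.

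The guiding model is the Hilbert scheme case $M = X^{[d]}$. Using Bogomolov--Mumford--Voisin, fix $i$ generic rational curves $R_1, \ldots, R_i \subset X$ and form the addition map
\[
\psi_i\colon X^{[d-i]} \times R_1 \times \cdots \times R_i \dashrightarrow X^{[d]},
\qquad (\xi, c_1, \ldots, c_i) \longmapsto \xi + c_1 + \cdots + c_i.
\]
Over a generic $\xi \in X^{[d-i]}$, the fiber $R_1 \times \cdots \times R_i$ is constant cycle in $X^{[d]}$: every $c_j$ is rationally equivalent to $o_X$ in $\mathrm{CH}_0(X)$, so $[\xi + c_1 + \cdots + c_i] = [\xi] + i[o_X]$ is independent of $(c_1, \ldots, c_i)$, and Theorem~\ref{mainconj} promotes this to an equality of classes in $\mathrm{CH}_0(X^{[d]})$. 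Coisotropy of the image $Z_i$ in $X^{[d]}$ of codimension $i$ then follows from Voisin's fact that constant cycle subvarieties of holomorphic symplectic varieties are isotropic, together with a dimension count for $\psi_i$.

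For a general moduli space $M = M_\sigma(\mathbf{v})$, I would mimic this construction on the derived side using the universal object $\mathcal{U}$ on $X \times M$. The plan is: choose a $(2d-2i)$-dimensional auxiliary family $N$ of objects $\mathcal{E}_0$ with a smaller Mukai vector, and for $(\mathcal{E}_0, c_1, \ldots, c_i) \in N \times R_1 \times \cdots \times R_i$ produce a $\sigma$-stable object $\mathcal{E} \in M$ as an elementary modification (or extension) of $\mathcal{E}_0$ incorporating the skyscrapers $\mathcal{O}_{c_j}$. The Chern character calculation then gives
\[
c_2(\mathcal{E}) = c_2(\mathcal{E}_0) + [c_1] + \cdots + [c_i],
\]
so varying each $c_j$ along $R_j$ preserves $c_2(\mathcal{E})$ in $\mathrm{CH}_0(X)$ because $[c_j] = [o_X]$, and Theorem~\ref{mainconj} upgrades this to preservation of $[\mathcal{E}] \in \mathrm{CH}_0(M)$. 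The image of the assembled rational map is the desired $Z_i$.

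The hardest step is realizing the modification construction algebraically in families while retaining $\sigma$-stability, and arranging the correct dimension count $2d - i$ for arbitrary rank of $\mathbf{v}$: the naive shift $\mathbf{v} \mapsto \mathbf{v} - i(0,0,1)$ only produces the right dimension when the rank is $1$, so in general one needs either a finer choice of Mukai vector, a wall-crossing that renders the relevant extensions semistable in a nearby chamber, or a reduction to a birational Hilbert-scheme model via Proposition~\ref{presfil}. Once (i) is established, part (ii) follows quickly: given $\mathcal{E} \in M$ with $c_2(\mathcal{E}) \in S_i(X)$, degree-matching forces $c_2(\mathcal{E}) = [\eta] + (d-i)[o_X]$ for some effective $\eta \in X^{(i)}$, and applying the construction of $Z_{d-i}$ produces $\mathcal{E}' \in Z_{d-i}$ with $c_2(\mathcal{E}') = c_2(\mathcal{E})$, hence $[\mathcal{E}'] = [\mathcal{E}] \in \mathrm{CH}_0(M)$ by Theorem~\ref{mainconj}; the $(d-i)$-dimensional constant cycle fiber of $Z_{d-i}$ through $\mathcal{E}'$ is contained in $O_\mathcal{E}$, so $\dim O_\mathcal{E} \geq d - i$ and $[\mathcal{E}] \in S_i^V \mathrm{CH}_0(M)$.
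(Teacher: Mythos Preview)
Your Hilbert scheme construction and the overall strategy are on the right track, and your derivation of (ii) from (i) is essentially correct in spirit. But the transfer from $X^{[d]}$ to a general moduli space $M_\sigma(\mathbf{v})$ has a genuine gap, and the paper resolves it by a different device than any of your three suggestions.

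The elementary-modification approach faces exactly the obstacle you name: there is no reason an extension of a $\sigma$-stable object by skyscrapers remains $\sigma$-stable, and the dimension count fails in higher rank. Wall-crossing does not obviously help, and the birational reduction via Proposition~\ref{presfil} is unavailable in general because $M_\sigma(\mathbf{v})$ need not be birational to any Hilbert scheme. The paper bypasses all of this with an \emph{incidence correspondence}
\[
R \;=\; \bigl\{(\mathcal{E},\xi)\in M\times X^{[d]} : c_2(\mathcal{E}) = [\mathrm{Supp}(\xi)] + c[o_X]\bigr\}.
\]
Theorem~\ref{mainthm} makes the projection $R\to M$ dominant, and O'Grady's argument makes $R\to X^{[d]}$ dominant, so one can choose an irreducible component $R_0$ generically finite over both. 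Your Hilbert scheme coisotropic $\psi_i(X^{[i]}\times C_{i+1}\times\cdots\times C_d)$ (the paper uses arbitrary constant cycle curves rather than rational curves, via a density lemma) is then \emph{pulled back along $R_0\to X^{[d]}$ and pushed forward along $R_0\to M$}. Theorem~\ref{mainconj} guarantees that fibers of $R_0$ over $M$ are constant cycle in $X^{[d]}$ and fibers over $X^{[d]}$ are constant cycle in $M$; this is exactly what is needed to transport the constant-cycle fibration structure from $X^{[d]}$ to $M$ without ever constructing a single stable object by hand.

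For part (ii) you also need a density input you have not mentioned: given $\mathcal{E}$ with $c_2(\mathcal{E})=[\eta]+(d-i)[o_X]$, the point $\eta\in X^{[i]}$ may lie over the indeterminacy locus of the rational fibration, so the fiber through the corresponding point could collapse. The paper uses Maclean's density theorem (every rational-equivalence orbit in $X^{[d]}$ is Zariski-dense) to replace $\eta$ by a rationally equivalent point in general position before invoking the construction.
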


\subsection{Special cubic fourfolds}

Special cubic $4$-folds are first introduced by Hassett in \cite{Ha} as elements lying in certain Noether--Lefschetz divisors of the moduli space. They have $K3$ surfaces associated to them via either Hodge theory or derived categories; see \cite{AT, Ha, Kuz}.

We focus on the case of a generic cubic $4$-fold $Y$ containing a plane, which lies in the divisor $\CC_8$ of the moduli space in the sense of Hassett~\cite{Ha}. A canonical $K3$ surface $X$ of degree $2$ is constructed and associated to $Y$ by Kuznetsov in \cite{Kuz}. On the other hand, let $F$ denote the Fano variety of lines in $Y$. It is well-known since \cite{BD} that $F$ is holomorphic symplectic of $K3^{[2]}$-type. In \cite{MS}, Macr\`i and Stellari realized $F$ as a moduli space of stable objects in the bounded derived category of twisted coherent sheaves~$D^b(X ,\alpha)$, where~$\alpha$ is an element in the Brauer group of $X$ of order $2$. In particular, the Fano variety $F$ is not birational to the Hilbert scheme of $2$ points on a $K3$ surface.

The following is an analog of Theorem \ref{mainconj} for the Fano varieties of such special cubic $4$-folds.
\begin{thm} \label{cubic4}
Let $Y$ be a generic cubic $4$-fold containing a plane, and let~$X$ be the associated $K3$ surface.
\begin{enumerate}
\item[(i)] There exists a canonical morphism of Chow groups
\[C_Y : \mathrm{CH}_0(F)_\BQ \to \mathrm{CH}_0(X)_\BQ.\]
\item[(ii)] Two lines $l, l'$ in $Y$ satisfy
\[[l] = [l'] \in \mathrm{CH}_0(F)\]
if and only if
\[C_Y([l]) = C_Y([l']) \in \mathrm{CH}_0(X)_\BQ.\]
\end{enumerate}
\end{thm}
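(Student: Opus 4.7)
The plan is to exploit the Macrì--Stellari realization $F \cong M_\sigma(X,\alpha,\mathbf{v})$ for a suitable generic stability condition $\sigma$ on $D^b(X,\alpha)$ and a primitive Mukai vector $\mathbf{v}$ with $\mathbf{v}^2 = 2$, and then to extend Theorem \ref{mainconj} to this twisted setting. Because $F$ is a (projectively) fine moduli space, there is a universal $\alpha$-twisted complex $\mathcal{U}$ on $F \times X$. Although $\alpha$ has order two and is non-trivial, its fiberwise second Chern class can be made sense of rationally after fixing a $B$-field lift of $\alpha$, or equivalently by extracting the appropriate graded piece of the Chern character of the untwisted object $\mathcal{U}^\vee \otimes \mathcal{U}$; this yields a well-defined class $c_2(\mathcal{U}) \in \mathrm{CH}^2(F \times X)_\BQ$. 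With this in hand I would define
\[
C_Y : \mathrm{CH}_0(F)_\BQ \to \mathrm{CH}_0(X)_\BQ, \qquad \xi \mapsto (\pi_X)_*\bigl( c_2(\mathcal{U}) \cdot \pi_F^*(\xi) \bigr).
\]
Canonicity for part~(i) follows by observing that the $B$-field ambiguity only modifies $c_2(\mathcal{U})$ by pullbacks of classes from $F$, which pushforward to zero against any $0$-cycle.

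For part~(ii), the implication $[l]=[l'] \Rightarrow C_Y([l])=C_Y([l'])$ is automatic from the correspondence definition. The converse is a twisted analogue of Theorem \ref{mainconj}: the $\sigma$-stable $\alpha$-twisted objects $\mathcal{E}_l, \mathcal{E}_{l'} \in D^b(X,\alpha)$ corresponding to the lines $l$ and $l'$ should satisfy $[\mathcal{E}_l] = [\mathcal{E}_{l'}]$ in $\mathrm{CH}_0(F)_\BQ$ whenever $c_2(\mathcal{E}_l) = c_2(\mathcal{E}_{l'})$ in $\mathrm{CH}_0(X)_\BQ$. I would adapt the Marian--Zhao strategy of \cite{MZ} to the $\alpha$-twisted derived category: their argument relies only on a universal Chern character on $M \times X$, flatness of the universal family, and the fact that rational equivalence on $M$ is controlled by fiberwise Chern classes; all three inputs carry over verbatim to $D^b(X,\alpha)$ once a rational $B$-field is fixed.

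The main obstacle I anticipate is the twisted Chern-class bookkeeping needed to ensure that Theorem \ref{mainthm} and its consequences (in particular the filtration $S_\bullet \mathrm{CH}_0$ on the moduli space) still make sense and behave well for $\alpha$-twisted objects. A clean device to circumvent this is to work on the associated $\mu_2$-gerbe $\mathcal{X} \to X$ representing $\alpha$, where $\alpha$-twisted sheaves become honest weight-one sheaves, and to transfer Chow-theoretic conclusions back to $X$ by a Galois-averaging argument with $\BQ$-coefficients, exploiting that $\alpha$ has order exactly two. Once the twisted analogue of Theorem \ref{mainconj} is in place, both parts of Theorem \ref{cubic4} follow by applying it to the universal family on $F \times X$ provided by \cite{MS}.
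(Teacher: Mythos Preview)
Your approach is genuinely different from the paper's, and the key step you rely on is not established.

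The paper does \emph{not} use the Macr\`i--Stellari modular interpretation of $F$ to define $C_Y$ or to prove part~(ii). Instead it proceeds entirely geometrically. First, via the Shen--Vial eigenspace decomposition
\[
\mathrm{CH}_0(F)_\BQ = \mathrm{CH}_0(F)_{(0)} \oplus \mathrm{CH}_0(F)_{(2)} \oplus \mathrm{CH}_0(F)_{(4)}
\]
under Voisin's rational self-map $\varphi$, it proves (Theorem~\ref{thm7}) that $[l]=[l']$ in $\mathrm{CH}_0(F)$ if and only if $[l]_{(2)}=[l']_{(2)}$; the argument uses the incidence correspondence $I$ and the relation $[S_l]^2 = 6[o_F] + \varphi_*[l] - 2[l]$. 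Second, using the quadric fibration $\widetilde{Y}\to\BP^2$ and the $\BP^1$-bundle $\CM\to X$ of lines in the fibers, it builds a uniruled divisor $D\subset F$ and shows (Theorem~\ref{thmcubic}) that the image of $r_*:\mathrm{CH}_0(\CM)_\BQ\to\mathrm{CH}_0(F)_\BQ$ is exactly $\mathrm{CH}_0(F)_{(0)}\oplus\mathrm{CH}_0(F)_{(2)}$ and that $r_*$ is injective, giving $\mathrm{CH}_0(F)_\BQ \simeq \mathrm{CH}_0(X)_\BQ \oplus \mathrm{CH}_0(F)_{(4)}$. The map $C_Y$ is then the projection onto the $\mathrm{CH}_0(X)_\BQ$ summand, and part~(ii) follows by combining these two theorems.

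Your proposal hinges on a twisted analogue of Theorem~\ref{mainconj}, which you assert ``carries over verbatim'' from \cite{MZ}. This is precisely the point the paper treats as open: the authors write that they \emph{expect} $C_Y$ to arise from a characteristic class of twisted sheaves, and in Section~\ref{dimtwo} they pose as a question whether even the simpler statement---that the Chow isomorphism induced by a twisted universal family preserves the Beauville--Voisin ring, let alone O'Grady's filtration---holds. The Marian--Zhao argument is not purely formal in the universal family; extending it to $D^b(X,\alpha)$ requires control over twisted Chern classes and their interaction with $S_\bullet(X)$ that is not available in the paper. So your plan, while natural and likely the ``right'' eventual explanation, contains a genuine gap at the step where you invoke a twisted \cite{MZ}: that result would itself be a theorem requiring proof, not a bookkeeping exercise.
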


Our construction of $C_Y$ uses the motivic decomposition of $F$ as obtained by Shen and Vial in \cite{SV}, plus the geometry of a \mbox{uniruled} divisor in $F$. Since $F$ is isomorphic to a moduli space of stable objects in $D^b(X, \alpha)$, we expect $C_Y$ to be realized geometrically as (the dimension $0$ part of) a natural characteristic class of twisted sheaves.

\subsection{Conventions}

Throughout, we work over the complex numbers $\mathbb{C}$. All varieties are assumed to be (quasi-)projective, and $K3$ surfaces are nonsingular and projective. Equivalences of triangulated categories are $\BC$-linear.

\subsection{Acknowledgements}

We are grateful to Daniel Huybrechts for inspiring the present form of this paper, and to Claire Voisin for a discussion related to Section \ref{VQ}. We also thank Arend Bayer, Zhiyuan Li, Hsueh-Yung Lin, Emmanuele Macr\`i, Alina Marian, Rahul Pandharipande, and Ulrike Rie{\ss} for their interest and for useful discussions.

J.~ S. was supported by grant ERC-2012-AdG-320368-MCSK in the group of Rahul Pandharipande at ETH Z\"urich.

\section{Chern classes and O'Grady's filtration}

In this section, we prove Theorem \ref{mainthm} and Corollary \ref{cortomainthm}.

\subsection{Preliminaries}
We first list a few useful facts. Let $X$ be a $K3$ surface.

\begin{lem}[{\cite[Corollary 1.7]{OG2}}] \label{Lemma1}
Let $\alpha, \alpha' \in \mathrm{CH}_0(X)$.
\begin{enumerate}
\item[(i)] If $\alpha \in S_i(X)$ and $\alpha' \in S_{i'}(X)$, then $\alpha + \alpha' \in S_{i + i'}(X)$. 
\item[(ii)] If $\alpha \in S_i(X)$, then $m\alpha \in S_i(X)$ for any $m \in \mathbb{Z}$. 
\end{enumerate}
\end{lem}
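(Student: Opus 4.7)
The plan is to treat the two parts separately, with part (ii) being the substantive one. For part (i), the argument is immediate by unpacking the definitions: picking effective representatives $\alpha = [z] + n[o_X]$ with $\deg z = i$ and $\alpha' = [z'] + n'[o_X]$ with $\deg z' = i'$, one has
\[
\alpha + \alpha' = [z + z'] + (n + n')[o_X],
\]
and since $z + z'$ is effective of degree $i + i'$, this witnesses $\alpha + \alpha' \in S_{i + i'}(X)$.

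For part (ii), I would first dispose of the cases $m = 0$ and $m = 1$: one has $0 \in \BZ \cdot [o_X] \subset S_i(X)$ and $1 \cdot \alpha = \alpha$. For general $m \in \BZ$, writing $\alpha = [z] + n[o_X]$ with $z = x_1 + \cdots + x_i$ effective of degree $i$, we have
\[
m\alpha = \sum_{j=1}^{i} m[x_j] + mn[o_X],
\]
and my plan is to reduce to the case $i = 1$. Concretely, if for each point $x_j$ we can exhibit $y_j \in X$ and $k_j \in \BZ$ with $m[x_j] = [y_j] + k_j[o_X]$, then $z'' := y_1 + \cdots + y_i$ is effective of degree $i$ and
\[
m\alpha = [z''] + \Bigl(mn + \sum_{j} k_j\Bigr)[o_X]
\]
is visibly in $S_i(X)$. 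Hence the essential content is the pointwise claim: for each $x \in X$ and $m \in \BZ$, there exists $y \in X$ with $m[x] - [y] \in \BZ \cdot [o_X]$; matching degrees forces the identity $m[x] = [y] + (m-1)[o_X]$.

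The hard part will be this pointwise identity. My plan is to invoke the Beauville--Voisin relation --- that any intersection of two divisor classes on $X$ lies in $\BZ \cdot [o_X]$ --- together with a Riemann--Roch argument on a suitably chosen curve $C \subset X$ through both $x$ and $o_X$ in an appropriate linear system. On such a $C$, one produces a linear equivalence between the divisor $m[x]$ and a divisor of the form $y + (m-1) o_X$, and pushing forward to $X$ yields the desired identity. The delicate point is to guarantee the existence of a curve $C$ on which the relevant degree-$1$ class is effective; when a single curve does not suffice, one would vary $C$ in a family and use the flexibility afforded by the Beauville--Voisin rigidity to collapse the auxiliary contributions into multiples of $[o_X]$. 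This is the nontrivial geometric content carried out in \cite[\S 1]{OG2}, which we would invoke rather than reproduce.
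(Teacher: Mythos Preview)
Your reduction is correct and matches the paper's (commented-out) argument: part (i) is immediate from the definition, and part (ii) reduces to the pointwise claim that for each $x \in X$ and $m \in \BZ$ there exists $y \in X$ with $m[x] = [y] + (m-1)[o_X]$ in $\mathrm{CH}_0(X)$. The paper itself simply cites \cite[Corollary 1.7]{OG2} without proof, so in that sense your proposal of ``reduce to the pointwise statement, then invoke O'Grady'' is exactly what the paper does.

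However, your speculative description of \emph{how} the pointwise claim is established is off-track. A Riemann--Roch argument on a curve through $x$ and $o_X$ in a general linear system will not work: on a curve of genus $g \geq 2$ there is no reason for $m[x] - (m-1)[o_X]$ to be effective of degree $1$. The actual mechanism (visible in the paper's commented-out proof and in O'Grady) is the Bogomolov--Mumford-type fact that every point of a $K3$ surface lies on a (possibly singular) curve $C$ whose normalization has genus $\leq 1$. If $C$ is rational, then $[x] = [o_X]$ already and one may take $y = x$. If the normalization is elliptic, choose a point $o_C \in C$ lying on a rational curve in $X$ (so $[o_C] = [o_X]$), and use the group law on the elliptic curve to find $y \in C$ with $m\,x \sim y + (m-1)\,o_C$ on $C$; pushing forward to $X$ gives the claim. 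No ``varying $C$ in a family'' or Beauville--Voisin collapse is needed.
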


\begin{cor} \label{Cor1}
Let 
\[
  \CF \to \CE \to \CG \to \mathcal{F}[1]
\]
be a distinguished triangle in $D^b(X)$. If two of $c_2(\mathcal{E}), c_2(\mathcal{F}), c_2(\mathcal{G})$ lie in $S_i(X)$ and $S_{i'}(X)$ respectively, then the third lies in $S_{i+i'}(X)$.
\end{cor}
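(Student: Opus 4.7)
The plan is to exploit the additivity of the Chern character in a distinguished triangle, combined with the Beauville--Voisin property that intersections of divisor classes land in $S_0(X)$, and then feed everything into Lemma \ref{Lemma1}.

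First I would write down the K-theoretic identity coming from the distinguished triangle: $\operatorname{ch}(\CE) = \operatorname{ch}(\CF) + \operatorname{ch}(\CG)$ in $\mathrm{CH}^*(X)_\BQ$. Expanding degree by degree, this gives $c_1(\CE) = c_1(\CF) + c_1(\CG)$ and $\operatorname{ch}_2(\CE) = \operatorname{ch}_2(\CF) + \operatorname{ch}_2(\CG)$. Rewriting $\operatorname{ch}_2 = \tfrac{1}{2}(c_1^2 - 2c_2)$ and substituting the relation for $c_1$, the cross-term $2 c_1(\CF) \cdot c_1(\CG)$ appears and one obtains the clean identity
\[
c_2(\CE) \;=\; c_2(\CF) + c_2(\CG) + c_1(\CF) \cdot c_1(\CG) \quad \in \mathrm{CH}_0(X).
\]
This identity is really the engine of the whole argument; the rest is bookkeeping inside the filtration $S_\bullet(X)$.

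Next I would invoke the Beauville--Voisin theorem \cite{BV}, which says that any intersection of two divisor classes on $X$ lies in $\BZ \cdot [o_X] = S_0(X)$. Applied to our identity, $c_1(\CF) \cdot c_1(\CG) \in S_0(X) \subset S_j(X)$ for every $j \geq 0$.

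Finally I would conclude by three parallel applications of Lemma \ref{Lemma1}, one for each of the three possible ``two out of three'' hypotheses. For instance, if $c_2(\CF) \in S_i(X)$ and $c_2(\CG) \in S_{i'}(X)$, then part (i) of the lemma (together with $S_0 \subset S_{i+i'}$) immediately gives $c_2(\CE) \in S_{i+i'}(X)$. In the remaining two cases one solves the identity for the missing term, e.g.\ $c_2(\CG) = c_2(\CE) - c_2(\CF) - c_1(\CF)\cdot c_1(\CG)$, and uses part (ii) with $m = -1$ to flip signs before applying part (i). I do not anticipate a real obstacle here: the only subtlety is remembering that $c_2$ does not add naively along triangles (there is a $c_1 \cdot c_1$ correction), and the whole point is that on a $K3$ surface this correction is automatically trivial in the filtration sense.
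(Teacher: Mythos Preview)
Your proof is correct and follows essentially the same approach as the paper: write $c_2(\CE) = c_2(\CF) + c_2(\CG) + D$ with $D$ an intersection of divisor classes, invoke \cite{BV} to place $D \in S_0(X)$, and conclude via Lemma~\ref{Lemma1}. The paper's proof is simply terser, leaving the explicit computation of $D = c_1(\CF)\cdot c_1(\CG)$ and the sign-flipping for the ``solve for the third term'' cases implicit.
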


\begin{proof}
By the distinguished triangle, we have
\[
c_2(\CE) = c_2(\CF)+c_2(\CG)+D,
\]
where $D$ is spanned by intersections of divisor classes. Hence $D \in S_0(X)$ by~\cite{BV} and the statement follows immediately from Lemma \ref{Lemma1}.
\end{proof}

We will need the following generalization of a lemma of Mukai \cite[Corollary 2.8]{Muk1}.

\begin{lem}[{\cite[Lemma 2.5]{BB}}]\label{MukLem}
Let
\[
\CF \to \CE \to \CG \to \mathcal{F}[1]
\]
be a distinguished triangle in $D^b(X)$. If $\mathrm{Hom}(\CF , \CG)=0$, then there is an inequality
\[
d(\CF) + d(\CG) \leq d(\CE).
\]
\end{lem}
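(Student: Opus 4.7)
The plan is to extend Mukai's argument \cite[Corollary 2.8]{Muk1} to the derived setting, using the $2$-Calabi--Yau structure on $X$ to control the Ext groups throughout. Write $h^i(A, B) = \dim \Ext^i(A, B)$; the target inequality reads $h^1(\CE, \CE) \geq h^1(\CF, \CF) + h^1(\CG, \CG)$.

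First, I would use Serre duality on $X$, namely $\Ext^i(A, B) \cong \Ext^{2-i}(B, A)^\vee$, to translate the hypothesis $\Hom(\CF, \CG) = 0$ into the vanishing $\Ext^2(\CG, \CF) = 0$. Considering the long exact sequences in $\Ext^*(-, \CF)$ and $\Ext^*(\CG, -)$ obtained from the distinguished triangle, this vanishing forces the two surjections
\[ \Ext^1(\CE, \CF) \twoheadrightarrow \Ext^1(\CF, \CF), \qquad \Ext^1(\CG, \CE) \twoheadrightarrow \Ext^1(\CG, \CG). \]

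Next, from the long exact sequence in $\Ext^*(-, \CE)$ associated to the triangle, I would extract the exact segment
\[ \Hom(\CF, \CE) \xrightarrow{\delta_0} \Ext^1(\CG, \CE) \to \Ext^1(\CE, \CE) \to \Ext^1(\CF, \CE) \xrightarrow{\delta_1} \Ext^2(\CG, \CE). \]
Exactness (combined with $h^1(\CF, \CE) = h^1(\CE, \CF)$ via Serre duality) gives the identity
\[ h^1(\CE, \CE) = h^1(\CG, \CE) + h^1(\CE, \CF) - \dim \mathrm{im}(\delta_0) - \dim \mathrm{im}(\delta_1). \]
Both $\delta_0$ and $\delta_1$ are Yoneda multiplication with the boundary class $\partial \in \Ext^1(\CG, \CF)$ of the triangle, and I would verify two factorizations. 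For $\delta_0$: the vanishing $\Hom(\CF, \CG) = 0$ forces $\Hom(\CF, \CF) \xrightarrow{\sim} \Hom(\CF, \CE)$, so $\delta_0$ factors as $\Hom(\CF, \CF) \xrightarrow{\cdot \partial} \Ext^1(\CG, \CF) \to \Ext^1(\CG, \CE)$ and lands inside the image of $\Ext^1(\CG, \CF) \to \Ext^1(\CG, \CE)$, whose dimension is exactly $h^1(\CG, \CE) - h^1(\CG, \CG)$. For $\delta_1$: the subspace $\Ext^1(\CF, \CF) \hookrightarrow \Ext^1(\CF, \CE)$ is annihilated because Yoneda multiplication with $\partial$ then factors through $\Ext^2(\CG, \CF) = 0$, so $\delta_1$ descends to the quotient and $\dim \mathrm{im}(\delta_1) \leq h^1(\CE, \CF) - h^1(\CF, \CF)$.

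Substituting these two bounds into the displayed identity yields $h^1(\CE, \CE) \geq h^1(\CF, \CF) + h^1(\CG, \CG)$, which is the desired inequality $d(\CE) \geq d(\CF) + d(\CG)$. The main obstacle is making precise the identification of $\delta_0$ and $\delta_1$ with Yoneda multiplication by $\partial$ together with the two factorizations above; once the Calabi--Yau vanishing $\Ext^2(\CG, \CF) = 0$ is in hand, the remainder is careful diagram chasing across the relevant $\Ext$ long exact sequences.
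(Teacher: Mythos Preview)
The paper does not prove this lemma: it is quoted verbatim from \cite[Lemma 2.5]{BB} and used as a black box, so there is no ``paper's own proof'' to compare against.

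Your argument is essentially the correct Mukai-style proof and goes through. One small inaccuracy: the map $\Hom(\CF,\CF)\to\Hom(\CF,\CE)$ is only a \emph{surjection} under the hypothesis $\Hom(\CF,\CG)=0$ (injectivity would require $\Ext^{-1}(\CF,\CG)=0$, which need not hold for arbitrary complexes). Fortunately surjectivity is all you use: any $f\in\Hom(\CF,\CE)$ lifts to some $h\in\Hom(\CF,\CF)$, and then $\delta_0(f)$ is the image of $h\cdot\partial\in\Ext^1(\CG,\CF)$ under $\Ext^1(\CG,\CF)\to\Ext^1(\CG,\CE)$, so $\mathrm{im}(\delta_0)$ indeed lands in that image. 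With this correction the two bounds
\[
\dim\mathrm{im}(\delta_0)\le h^1(\CG,\CE)-h^1(\CG,\CG),\qquad
\dim\mathrm{im}(\delta_1)\le h^1(\CE,\CF)-h^1(\CF,\CF)
\]
are valid, and substituting them into your exact-sequence identity gives $h^1(\CE,\CE)\ge h^1(\CF,\CF)+h^1(\CG,\CG)$ as claimed.
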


The following is a direct consequence of Corollary \ref{Cor1} and Lemma \ref{MukLem}.

\begin{prop}\label{proptri}
Let \[
\CF \to \CE \to \CG \to \mathcal{F}[1]
\]
be a distinguished triangle in $D^b(X)$ satisfying $\mathrm{Hom}(\CF , \CG)=0$. If
\[c_2(\CF) \in S_{d(\CF)}(X) \ \ \textrm{and} \ \ c_2(\CG) \in S_{d(\CG)}(X),\]
then
\[c_2(\CE)\in S_{d(\CE)}(X).\]
\end{prop}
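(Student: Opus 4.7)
The proposition is stated as a direct consequence of Corollary \ref{Cor1} and Lemma \ref{MukLem}, and indeed the proof is essentially a two-line combination of these two inputs. The plan is to extract a bound on the $S_\bullet$-index of $c_2(\CE)$ from the distinguished triangle via Corollary \ref{Cor1}, and then use the hypothesis $\mathrm{Hom}(\CF,\CG)=0$ to upgrade that bound through Lemma \ref{MukLem}.

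Concretely, I would first apply Corollary \ref{Cor1} to the given distinguished triangle. Since by assumption $c_2(\CF) \in S_{d(\CF)}(X)$ and $c_2(\CG) \in S_{d(\CG)}(X)$, the corollary yields
\[
c_2(\CE) \in S_{d(\CF)+d(\CG)}(X).
\]
Next, I would invoke Lemma \ref{MukLem}: the vanishing $\mathrm{Hom}(\CF,\CG)=0$ is exactly the hypothesis needed, and it gives the inequality
\[
d(\CF)+d(\CG) \leq d(\CE).
\]
Since $S_\bullet(X)$ is an increasing filtration, combining the two displays yields $c_2(\CE) \in S_{d(\CE)}(X)$, as desired.

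There is no real obstacle in this argument, since all the work has been packaged into the two preceding statements; the only conceptual point to emphasize is the role of $\mathrm{Hom}(\CF,\CG)=0$, which is what converts the purely additive bound from Corollary \ref{Cor1} (index $d(\CF)+d(\CG)$) into the sharper bound (index $d(\CE)$) demanded by the conclusion. Without this hypothesis one would only obtain $c_2(\CE) \in S_{d(\CF)+d(\CG)}(X)$, which could be strictly weaker than $S_{d(\CE)}(X)$ when $\CE$ admits nontrivial self-extensions beyond those accounted for by $\CF$ and $\CG$.
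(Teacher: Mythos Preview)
Your proof is correct and matches the paper's approach exactly: the paper simply states that the proposition is a direct consequence of Corollary \ref{Cor1} and Lemma \ref{MukLem}, and your two-step argument (Corollary \ref{Cor1} gives $c_2(\CE)\in S_{d(\CF)+d(\CG)}(X)$, then Lemma \ref{MukLem} and the increasing property of $S_\bullet$ upgrade this to $S_{d(\CE)}(X)$) is precisely the intended justification.
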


We also recall the theorems of Huybrechts and Voisin which serve as the starting point of our proof.

\begin{thm}[{\cite[Theorem 1]{Huy}} and {\cite[Corollary 1.10]{V1}}] \label{Huybrechts}
If $\mathcal{E} \in D^b(X)$ is spherical, \textit{i.e.}, $\mathrm{Ext}^*(\CE, \CE) = H^*(\mathbb{S}^2, \mathbb{C})$, then
\[c_2(\CE) \in S_0(X).\]
\end{thm}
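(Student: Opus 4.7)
The plan is to deduce the spherical case from Voisin's theorem \cite[Corollary 1.10]{V1}, which asserts that any simple vector bundle $\CF$ on $X$ satisfies $c_2(\CF) \in S_{d(\CF)}(X)$. Applied to a spherical vector bundle (which is simple and has $d = 0$), this immediately gives $c_2 \in S_0(X)$. The task is therefore to reduce a general spherical object $\CE \in D^b(X)$ to the vector bundle case while tracking $c_2$ modulo $S_0(X)$.

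The reduction I propose has two steps. First, from a complex to a sheaf: since $\CE$ is rigid with $v(\CE)^2 = -2$, Mukai--Kuleshov-type rigidity arguments suggest that $\CE$ is related to a spherical coherent sheaf via shifts and spherical twists. The cone triangles defining spherical twists have error terms whose Chern characters are built from divisor classes of the spherical object, so their $c_2$ contributions lie in the Beauville--Voisin ring $R^*(X) \subset S_0(X)$; combined with Corollary \ref{Cor1}, this shows $c_2(\CE)$ is determined modulo $S_0(X)$ by that of a spherical sheaf. Second, from a spherical sheaf to a vector bundle: for torsion-free $\CE$, the double dual $\CE^{**}$ is locally free and simple, and the triangle $\CE \to \CE^{**} \to Q \to \CE[1]$ has $Q$ zero-dimensional, so the discrepancy contributes an effective $0$-cycle that sits in some $S_j(X)$. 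Analogous bookkeeping handles torsion parts (the $0$-dimensional piece directly, and the $1$-dimensional piece via filtration by line bundles on rational curves, where every point represents $[o_X]$); Voisin's theorem applied to $\CE^{**}$ then closes the argument via another application of Corollary \ref{Cor1}.

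The main obstacle I expect will be controlling $c_2$ under spherical twists when $\CE$ is a genuine complex with several nonzero cohomology sheaves that individually need not be rigid, since one cannot simply take cohomology and apply the sheaf case to each factor. A cleaner alternative, closer in spirit to Huybrechts' original method, would be to specialize $X$ to a K3 surface equipped with an elliptic fibration with a section: spherical objects there admit explicit descriptions in terms of line bundles supported on fibres, the class $[o_X]$ is represented by points on rational fibres, and the rigidity of $\CE$ ensures that it deforms uniquely through the specialization, so that $c_2$ can be computed directly on the special member. I would keep this specialization route in reserve if the twist-by-twist analysis proves too intricate.
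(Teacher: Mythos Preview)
The paper does not prove this statement: Theorem~\ref{Huybrechts} is quoted from \cite{Huy} and \cite{V1} as a known input and then used as a black box (in the proof of Proposition~\ref{Prop2}). So there is no proof in the paper to compare against directly.

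Your outline matches the shape of the literature argument---Huybrechts reduces an arbitrary spherical object to a spherical vector bundle, and Voisin's simple-vector-bundle theorem finishes---but two points need tightening. In the sheaf-to-bundle step you say the cokernel $Q$ of $\CE \hookrightarrow \CE^{\vee\vee}$ lands in ``some $S_j(X)$'', which via Corollary~\ref{Cor1} only yields $c_2(\CE) \in S_j(X)$; you need $j = 0$. The fix is that Lemma~\ref{MukLem} applied to $Q[-1] \to \CE \to \CE^{\vee\vee}$ forces $d(Q) \leq d(\CE) = 0$, hence $Q = 0$: a torsion-free spherical sheaf is already locally free, and the torsion analysis is likewise unnecessary. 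For the complex-to-sheaf step via spherical twists, beware circularity: to know that twisting by a spherical $\CL$ moves $c_2$ only within $S_0(X)$ you need $c_2(\CL) \in S_0(X)$, which is the assertion being proved.

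A cleaner route is to notice that the paper's own Section~1 already reproves Theorem~\ref{Huybrechts} from Theorem~\ref{Voisin} alone. The only place Theorem~\ref{Huybrechts} is invoked is in Proposition~\ref{Prop2}, for a $\mu$-stable torsion-free $\CF$ with $v(\CF)^2 = -2$; but such an $\CF$ is already covered by Proposition~\ref{Prop11} (equivalently, it is a simple vector bundle by the argument above, so Theorem~\ref{Voisin} applies directly), and the citation is redundant. With that one-line replacement, Theorem~\ref{mainthm} follows from Theorem~\ref{Voisin} only, and specializing to $d(\CE) = 0$ recovers Theorem~\ref{Huybrechts}. This bypasses both the spherical-twist bookkeeping and the specialization-to-elliptic-$K3$ argument you had in reserve.
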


\begin{thm}[{\cite[Theorem 1.9]{V1}}] \label{Voisin}
If $\mathcal{E}$ is a simple vector bundle on $X$,~then
\[c_2(\CE) \in S_{d(\mathcal{E})}(X).\]
\end{thm}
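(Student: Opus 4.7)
The plan is to reduce the statement to a dimension count via the Serre construction and Voisin's orbit characterization of the filtration $S_\bullet(X)$: for an effective $0$-cycle $z$ of degree $n$ on $X$, the class $[z]$ lies in $S_i(X)$ if and only if the preimage of $[z]$ under $X^{(n)} \to \mathrm{CH}_0(X)$ contains an irreducible component of dimension $\geq n - i$.

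First I would pick $L$ sufficiently ample on $X$ so that $\CE \otimes L$ is globally generated with vanishing higher cohomology, and so that a generic $(r-1)$-tuple of sections of $\CE \otimes L$, with $r = \mathrm{rk}\,\CE$, fits into a Serre exact sequence
\[
0 \to \CO_X^{\oplus(r-1)} \to \CE \otimes L \to I_Z(D) \to 0,
\]
where $D = \det(\CE \otimes L)$ and $Z \subset X$ is a Cayley--Bacharach $0$-dimensional subscheme of length $n = c_2(\CE \otimes L)$. The Beauville--Voisin relations give $c_2(\CE) - [Z] \in \BZ \cdot [o_X]$ in $\mathrm{CH}_0(X)$, so the problem reduces to proving $[Z] \in S_{d(\CE)}(X)$.

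Varying the tuple of sections in the Grassmannian $G := \mathrm{Gr}(r-1, H^0(\CE \otimes L))$ defines a rational map $\phi : G \dashrightarrow \mathrm{Hilb}^n(X)$. Since $G$ is rational, any two points of $\mathrm{Im}(\phi)$ are joined by a chain of rational curves, whose pushforward to $X^{(n)}$ realizes a rational equivalence between the corresponding $0$-cycles; thus $\mathrm{Im}(\phi)$ sits inside a single fiber of $X^{(n)} \to \mathrm{CH}_0(X)$. The Cayley--Bacharach property forces $\phi$ to be generically injective (for $L$ positive enough), and a direct Riemann--Roch computation using the formulas for $\chi(\CE \otimes L)$, $c_2(\CE \otimes L)$, and $d(\CE) = \tfrac{1}{2} v(\CE)^2 + 1$ gives the striking equality
\[
\dim G \;=\; (r-1)\bigl(h^0(\CE \otimes L) - (r-1)\bigr) \;=\; n - d(\CE).
\]
The orbit characterization then delivers $[Z] \in S_{d(\CE)}(X)$.

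The hardest step will be establishing the orbit characterization of $S_i(X)$ itself---the implication ``the orbit has an irreducible component of dimension $\geq n - i$ $\Rightarrow$ $[z] \in S_i(X)$'' is subtle and rests on Roitman's theorem together with a Mumford-style infinitesimal analysis of the incidence variety $\{(z, z') \in X^{(n)} \times X^{(n)} : [z] = [z']\}$. A secondary technical point is to check that $L$ can be chosen positive enough to guarantee simultaneously the Cayley--Bacharach condition, the generic injectivity of $\phi$, and the local freeness of the Serre extension for all $r \geq 2$; the rank $1$ case is trivial since then $c_2(\CE) = 0$ and $d(\CE) = 0$.
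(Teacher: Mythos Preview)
The paper does not prove this statement; it is quoted verbatim from Voisin \cite[Theorem 1.9]{V1} and used as a black-box input to the proof of Theorem \ref{mainthm}. Your sketch is essentially a faithful reconstruction of Voisin's original argument: the orbit characterization of $S_\bullet(X)$ is precisely her Theorem 2.1 (what you correctly identify as the hardest step), and the Serre-construction-plus-dimension-count is exactly how she deduces the vector bundle case from it. The identity $\dim G = n - d(\CE)$ is correct as stated.

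Two small remarks. First, the fact that $\mathrm{Im}(\phi)$ lies in a single rational-equivalence orbit is more direct than the rationality of $G$: every $Z$ arising from a section tuple satisfies $[Z] = c_2(\CE \otimes L)$ in $\mathrm{CH}_0(X)$ by the Whitney formula, so the class is fixed from the outset. Second, the phrase ``Cayley--Bacharach forces $\phi$ to be generically injective'' is slightly off: Cayley--Bacharach is what guarantees the Serre extension is locally free, not injectivity. What you actually need is that the generic fiber of $\phi$ is finite, which follows from the observation that the $(r-1)$-plane $W$ is recovered as the kernel of $H^0(\CE\otimes L)\to H^0(I_Z(D))$, together with a check that $h^0(I_Z(D))$ has the expected value for generic $Z$ in the image. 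None of this affects the validity of the overall strategy.
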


\subsection{Slope-stable sheaves}
From now on, we fix a polarization $H$ on $X$. The slope of a torsion-free sheaf $\CE$ with respect to $H$ is
\[
\mu(\CE) = \frac{c_1(\CE)\cdot H}{\mathrm{rank}(\CE)}.
\]
A torsion-free sheaf $\CE$ is called $\mu$-stable (resp.~$\mu$-semistable) if $\mu(\CF) < \mu(\CE)$ (resp.~$\mu(\CF) \leq \mu(\CE)$) for all subsheaves $\CF \subset \CE$ with $\mathrm{rank}(\CF) < \mathrm{rank}(\CE)$. A $\mu$-stable sheaf is Gieseker-stable (see \cite{HL}), but the converse is not true.

The following proposition proves Theorem \ref{mainthm} for $\mu$-stable sheaves.

\begin{prop}\label{Prop11}
If $\CE$ is torsion-free and $\mu$-stable on $(X, H)$, then
\[
c_2(\CE) \in S_{d(\CE)}(X).
\]
\end{prop}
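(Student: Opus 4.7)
My plan is to reduce the proposition to Voisin's theorem (Theorem \ref{Voisin}) by passing to the reflexive hull $\CE^{\vee\vee}$. On a smooth surface the natural map $\CE \hookrightarrow \CE^{\vee\vee}$ is injective with $\CE^{\vee\vee}$ locally free, and the quotient $Q := \CE^{\vee\vee}/\CE$ is supported in dimension $0$; write $\ell$ for its length. I would first verify that $\mu$-stability of $\CE$ forces $\mu$-stability of $\CE^{\vee\vee}$. Indeed, for any subsheaf $\CF' \subset \CE^{\vee\vee}$ of intermediate rank, the intersection $\CF' \cap \CE \subset \CE$ has the same rank and first Chern class as $\CF'$ (since $\CF'/(\CF' \cap \CE) \hookrightarrow Q$ is zero-dimensional), so $\mu(\CF') = \mu(\CF' \cap \CE) < \mu(\CE) = \mu(\CE^{\vee\vee})$. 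In particular $\CE^{\vee\vee}$ is a simple vector bundle, and Theorem \ref{Voisin} applies to give
\[c_2(\CE^{\vee\vee}) \in S_{d(\CE^{\vee\vee})}(X).\]

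Next I would feed the distinguished triangle $\CE \to \CE^{\vee\vee} \to Q \to \CE[1]$ into Corollary \ref{Cor1}. Since $Q$ is zero-dimensional of length $\ell$, its second Chern class is minus the effective zero-cycle $\sum_i \ell(Q_{p_i})[p_i]$ of degree $\ell$, so $c_2(Q) \in S_\ell(X)$ by the definition of O'Grady's filtration together with Lemma \ref{Lemma1}(ii). Combining this with the containment of $c_2(\CE^{\vee\vee})$, Corollary \ref{Cor1} yields
\[c_2(\CE) \in S_{d(\CE^{\vee\vee}) + \ell}(X).\]

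To finish, I need the numerical inequality $d(\CE^{\vee\vee}) + \ell \leq d(\CE)$. Since $\CE$ and $\CE^{\vee\vee}$ are both simple, Mukai's Riemann--Roch and Serre duality on the $K3$ surface give $d(\CF) = 1 + v(\CF)^2/2$ for each. The Mukai vectors $v(\CE)$ and $v(\CE^{\vee\vee})$ share their first two components $(r, c_1)$, while the top component satisfies $v_2(\CE^{\vee\vee}) = v_2(\CE) + \ell$ (from additivity of the Chern character). A direct computation then gives $v(\CE^{\vee\vee})^2 = v(\CE)^2 - 2r\ell$, hence $d(\CE) = d(\CE^{\vee\vee}) + r\ell$ with $r = \mathrm{rank}(\CE) \geq 1$. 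Thus $d(\CE^{\vee\vee}) + \ell \leq d(\CE)$, which combined with the previous display gives $c_2(\CE) \in S_{d(\CE)}(X)$. I expect the propagation of $\mu$-stability under reflexivization (standard but essential, since Voisin's input requires simplicity of $\CE^{\vee\vee}$) to be the only genuine subtlety; the rest is bookkeeping between $d(\CE)$, $d(\CE^{\vee\vee})$, and the length of $Q$.
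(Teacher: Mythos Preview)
Your proof is correct and follows essentially the same approach as the paper: pass to the reflexive hull $\CE^{\vee\vee}$, apply Voisin's theorem there, and control the discrepancy via the zero-dimensional quotient. The only cosmetic difference is in the final step: the paper rotates the triangle to $\mathcal{Q}[-1] \to \CE \to \CE^{\vee\vee}$, checks $\mathrm{Hom}(\mathcal{Q}[-1], \CE^{\vee\vee})=0$, and invokes Proposition~\ref{proptri} (i.e.\ Mukai's lemma packaged with Corollary~\ref{Cor1}), whereas you use Corollary~\ref{Cor1} directly and then verify the inequality $d(\CE^{\vee\vee}) + \ell \le d(\CE)$ by an explicit Riemann--Roch computation giving $d(\CE) = d(\CE^{\vee\vee}) + r\ell$.
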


\begin{proof}
The double dual $\CE^{\vee\vee}$ of $\CE$ is locally free. There is a short exact sequence of sheaves
\begin{equation}\label{equation11}
0 \to \CE \to \CE^{\vee\vee} \to \mathcal{Q} \to 0,
\end{equation}
where $\mathcal{Q}$ is a $0$-dimensional sheaf whose support is of length $l$. A direct calculation yields $d(\mathcal{Q}) \geq l$ and
\[c_2(\mathcal{Q}) \in S_l(X) \subset S_{d(\mathcal{Q})}(X).\] 

Now since $\CE$ is $\mu$-stable, the double dual $\CE^{\vee\vee}$ is also $\mu$-stable and hence simple. Applying Theorem \ref{Voisin}, we find
\[
c_2(\CE^{\vee\vee}) \in S_{d(\CE^{\vee\vee})}(X).
\]
Consider \eqref{equation11} as a distinguished triangle
\[\mathcal{Q}[-1] \to \CE \to \CE^{\vee\vee} \to \mathcal{Q}.\]
Since $\mathcal{Q}$ is $0$-dimensional and $\CE^{\vee\vee}$ is locally free, we have
\[\mathrm{Hom}(\mathcal{Q}[-1], \CE^{\vee\vee})=0.\] Applying Proposition \ref{proptri}, we conclude that $c_2(\CE) \in S_{d(\CE)}(X)$.
\end{proof}

We continue to treat sheaves which can be obtained as iterated extensions of $\mu$-stable sheaves.

Recall that the Mukai vector of a sheaf $\CE$ on $X$ is
\[v(\CE) = \mathrm{ch}(\CE)\sqrt{\mathrm{td}_X}=(\mathrm{rk}(\CE), c_1(\CE), \mathrm{ch}_2(\CE)+\mathrm{rk}(\CE)).\]
By Riemann--Roch and Serre duality, we have
\[
v(\CE)^2 = 2d(\CE) - 2\dim \mathrm{Hom}(\CE, \CE),
\]
where the left-hand side is the Mukai self-intersection.

\begin{prop}\label{Prop2}
Let $\CF$ be torsion-free and $\mu$-stable on $(X, H)$. If $\CE$ is an iterated extension of $\CF$, then
\[
c_2(\CE) \in S_{d(\CE)}(X).
\]
\end{prop}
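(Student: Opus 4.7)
The plan is to reduce the statement to Proposition \ref{Prop11} via multiplicativity of Chern classes in short exact sequences, and then verify a short numerical inequality relating $d(\CF)$ and $d(\CE)$ via the Mukai pairing.

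By definition, $\CE$ admits a filtration $0 = \CE_0 \subset \CE_1 \subset \cdots \subset \CE_n = \CE$ with each successive quotient $\CE_i/\CE_{i-1}$ isomorphic to $\CF$. Multiplicativity of total Chern classes across each short exact sequence gives $c(\CE) = c(\CF)^n$ in $\mathrm{CH}^*(X)$, whose degree $2$ component is
\[ c_2(\CE) = n\, c_2(\CF) + \binom{n}{2} c_1(\CF)^2. \]
The term $\binom{n}{2} c_1(\CF)^2$ is an intersection of divisor classes and lies in $S_0(X)$ by \cite{BV}. For the remaining term, Proposition \ref{Prop11} applied to $\CF$ gives $c_2(\CF) \in S_{d(\CF)}(X)$; Lemma \ref{Lemma1}(ii) then yields $n\,c_2(\CF) \in S_{d(\CF)}(X)$, and Lemma \ref{Lemma1}(i) produces $c_2(\CE) \in S_{d(\CF)}(X)$.

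It remains to show $d(\CF) \leq d(\CE)$, which will provide the inclusion $S_{d(\CF)}(X) \subset S_{d(\CE)}(X)$. The Mukai vector is additive in short exact sequences, so $v(\CE) = n \, v(\CF)$, and the $\mu$-stability (hence simplicity) of $\CF$ gives $v(\CF)^2 = 2(d(\CF) - 1)$. Combining,
\[ d(\CE) = \tfrac{1}{2} v(\CE)^2 + \dim \mathrm{Hom}(\CE, \CE) = n^2(d(\CF) - 1) + \dim \mathrm{Hom}(\CE, \CE). \]
When $d(\CF) \geq 1$, since $n^2 \geq 1$ we obtain $n^2(d(\CF) - 1) + 1 \geq d(\CF)$, so $d(\CE) \geq d(\CF)$. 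When $d(\CF) = 0$ the sheaf $\CF$ is spherical, so $\mathrm{Ext}^1(\CF, \CF) = 0$ forces every iterated extension to split, giving $\CE \cong \CF^{\oplus n}$ and $d(\CE) = 0 = d(\CF)$. The only potential obstacle is this last numerical step, but it reduces to elementary arithmetic once simplicity of $\CF$ pins down $v(\CF)^2 = 2d(\CF) - 2$; the rest of the argument is a direct invocation of Proposition \ref{Prop11} combined with the Beauville--Voisin relation.
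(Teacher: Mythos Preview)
Your proof is correct and follows essentially the same route as the paper: compute $c_2(\CE)$ as $n\,c_2(\CF)$ plus a divisor-intersection term, invoke Proposition~\ref{Prop11} and Lemma~\ref{Lemma1} to land in $S_{d(\CF)}(X)$, then compare $d(\CF)$ and $d(\CE)$ via the Mukai pairing, splitting off the spherical case. The only cosmetic difference is that in the spherical case the paper simply notes $c_2(\CE)\in S_0(X)\subset S_{d(\CE)}(X)$ without computing $d(\CE)$, whereas you observe the extensions split and verify $d(\CE)=0$ explicitly; both are fine.
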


\begin{proof}
Assume that $\CE$ contains $m$ $\CF$-factors. Then we have
\begin{equation}\label{eqn23}
c_2(\CE) = c_2(\CF^{\oplus m}) = mc_2(\CF) +D,
\end{equation}
where $D$ is spanned by intersections of divisor classes and hence lies in~$S_0(X)$. Combining Lemma \ref{Lemma1} and Proposition \ref{Prop11}, we find 
\begin{equation}\label{eqn32}
c_2(\CE) \in S_{d(\CF)}(X).
\end{equation}

If $\CF$ is spherical, \emph{i.e.}, $v(\CF)^2 = -2$, then $c_2(\CF) \in S_0(X)$ by Theorem \ref{Huybrechts}. By (\ref{eqn23}), we see that $c_2(\CE) \in S_0(X)$, and hence the statement holds.

We may focus on the case $v(\CF)^2 \geq 0$. Then we have
\begin{align*}
2d(\CE) & = v(\CE)^2 +2 \dim \mathrm{Hom}(\CE, \CE)\\
        & = m^2 v(\CF)^2 +2 \dim \mathrm{Hom}(\CE, \CE)\\
        & \geq v(\CF)^2 +2 \\
        & = 2d(\CF),
\end{align*}
where we use that $\CF$ is simple in the last equality. In this case the proposition follows from (\ref{eqn32}).
\end{proof}

\subsection{Torsion-free sheaves}
The next step is to prove Theorem \ref{mainthm} for arbitrary torsion-free sheaves.

The following proposition provides a nice splitting of a $\mu$-semistable vector bundle.

\begin{prop}\label{splitting}
Let $\CE$ be a $\mu$-semistable vector bundle on $(X, H)$. There exists a short exact sequence of sheaves
\begin{equation} \label{ses33}
0 \to \CM \to \CE \to \CG \to 0
\end{equation}
with the following properties:
\begin{enumerate}
\item[(i)] the sheaf $\CM$ is an iterated extension of a $\mu$-stable vector bundle $\CF$;
\item[(ii)] the quotient sheaf $\CG$ is torsion-free;
\item[(iii)] we have $\mathrm{Hom}(\CM, \CG) =0$.
\end{enumerate}
\end{prop}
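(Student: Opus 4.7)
The plan is to construct $\CM$ by iterating a socle-type operation in the abelian category $\mathcal{A}$ of torsion-free $\mu$-semistable sheaves on $(X, H)$ of slope $\mu := \mu(\CE)$; the simple objects of $\mathcal{A}$ are the $\mu$-stable sheaves of slope $\mu$, and every object of $\mathcal{A}$ has finite length (bounded by rank). First I choose $\CF$ to be any simple subobject of $\CE$ in $\mathcal{A}$, equivalently a saturated $\mu$-stable subsheaf of slope $\mu$. Such $\CF$ exists because the $\mathcal{A}$-socle of the nonzero object $\CE$ is nonzero. Since $\CE$ is locally free and $\CF$ is saturated in $\CE$, on the smooth surface $X$ any such $\CF$ is automatically reflexive and hence a $\mu$-stable vector bundle, as required by (i).

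Next I build $\CM$ inductively. Starting from $\CM_0 = 0$, I will assume that $\CM_k \subset \CE$ is already saturated, with $\CE/\CM_k$ torsion-free and $\CM_k$ an iterated extension of $\CF$. I then define $\CM_{k+1}/\CM_k \subset \CE/\CM_k$ to be the $\CF$-isotypic component of the $\mathcal{A}$-socle of $\CE/\CM_k$. Since the socle is semisimple in $\mathcal{A}$ and $\CF$ is locally free, this $\CF$-isotypic component is literally isomorphic to $\CF^{\oplus a_k}$ as a coherent sheaf and is saturated in $\CE/\CM_k$. Thus $\CM_{k+1}$ is saturated in $\CE$ with $\CE/\CM_{k+1}$ torsion-free, and it inherits the structure of an iterated extension of $\CF$. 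By the finite $\mathcal{A}$-length of $\CE$ this iteration terminates at some $\CM := \CM_N$, and termination is equivalent to $\Hom(\CF, \CG) = 0$ where $\CG := \CE/\CM$.

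The three properties are then straightforward: (ii) holds by saturation; (i) holds via the filtration $0 \subset \CM_1 \subset \cdots \subset \CM_N$ with successive quotients $\CF^{\oplus a_k}$; and (iii) follows by propagating $\Hom(\CF, \CG) = 0$ to $\Hom(\CM, \CG) = 0$ by induction on this filtration, using the long exact sequence obtained from $\Hom(-, \CG)$.

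The main obstacle lies in carefully matching abstract constructions in $\mathcal{A}$ with honest sheaf-theoretic statements: a priori, the $\mathcal{A}$-socle and its $\CF$-isotypic component are defined up to $\mathcal{A}$-isomorphism only, which could differ from sheaf isomorphism by a $0$-dimensional torsion cokernel. The choice of $\CF$ as a vector bundle is what resolves this: since $\CF$ is reflexive, any nonzero morphism from $\CF$ to a torsion-free sheaf in $\mathcal{A}$ is injective (by $\mu$-stability of $\CF$ combined with the slope bound in $\mathcal{A}$) and its saturation is again isomorphic to $\CF$, so the $\CF$-isotypic socle yields an honest $\CF^{\oplus a_k}$ as a subsheaf rather than an object only $\mathcal{A}$-isomorphic to it.
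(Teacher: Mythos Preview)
Your approach is essentially the paper's: choose a $\mu$-stable locally free subsheaf $\CF \subset \CE$ of the same slope, then iteratively enlarge by copies of $\CF$ until $\Hom(\CF,\CG)=0$, using local-freeness of $\CF$ to keep each successive quotient torsion-free; the paper simply adds one copy of $\CF$ at a time rather than the whole $\CF$-socle. One caution on your framing: the category $\mathcal{A}$ of torsion-free $\mu$-semistable sheaves of fixed slope is \emph{not} abelian --- the inclusion $\CI_Z \hookrightarrow \CO_X$ is both mono and epi there but not an isomorphism --- and the paper's remark immediately following this proposition flags exactly this obstruction. Your argument survives because you restrict to a locally free $\CF$, whose image in any torsion-free sheaf is automatically saturated; but then the claim that the $\CF$-isotypic socle is literally $\CF^{\oplus a_k}$ cannot rest on ``the socle is semisimple in $\mathcal{A}$'' and needs a short direct check (choose a maximal direct-sum family of copies of $\CF$; any further copy either lies inside it by saturation or forces a subsheaf of slope $>\mu$ in the quotient). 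Adding one $\CF$ at a time, as the paper does, avoids this detour entirely.
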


\begin{proof}
We only need to consider the case when $\CE$ is not $\mu$-stable. First, we can always find a $\mu$-stable sub-vector bundle $\CF \subset \CE$ with $\mu(\CF) = \mu(\CE)$. 

The construction goes as follows. Let $\CF_0$ be any $\mu$-stable subsheaf of $\CE$ with $\mu(\CF_0) = \mu(\CE)$. The double dual $\CF=\CF_0^{\vee\vee}$ is both $\mu$-stable (of the same slope) and locally free, which admits a nontrivial map
\[
i:\CF= \CF_0^{\vee\vee} \rightarrow \CE^{\vee\vee} = \CE.
\]
The map $i$ is injective according to the stability condition. Hence we obtain a short exact sequence of sheaves
\begin{equation}\label{ses1}
0 \to \CF \to \CE \to \CG_0 \to 0.
\end{equation}

\medskip
\noindent{\bf Claim.} The quotient sheaf $\CG_0$ is torsion-free and $\mu$-semistable.

\begin{proof}[Proof of the Claim]
The stability condition ensures that the torsion part of~$\CG_0$ is at most $0$-dimensional. Now assume that there is a short exact sequence of sheaves
\[
0 \rightarrow \mathcal{T} \to \CG_0 \rightarrow \CG^F_0 \rightarrow 0
\]
with $\mathcal{T}$ a nontrivial $0$-dimensional sheaf and $\CG^F_0$ torsion-free. We have a surjective map $\CE \rightarrow \CG^F_0$ given by $\CE \to \CG_0 \to \CG^F_0$ with kernel $\CF'$. It follows that $\CF'$ is a nontrivial extension of the $0$-dimensional sheaf $\mathcal{T}$ by the vector bundle $\CF$, which is a contradiction. This shows that $\CG_0$ is torsion-free.

Since $\mu(\CG_0)= \mu(\CF)$, the $\mu$-semistability follows from a standard argument by considering quotients of $\CG_0$ and comparing slopes.
\end{proof}

If $\mathrm{Hom}(\CF, \CG_0)=0$, then we are done by setting $\CM= \CF$ and $\CG = \CG_0$, and (\ref{ses1}) gives the desired exact sequence. Otherwise, there exists a nontrivial~map
\[
i_1: \CF \rightarrow \CG_0,
\]
which must be injective according to the stability condition. We define $\CG_1$ to be the quotient $\CG_0/\CF$. The same argument as in the Claim implies that  $\CG_1$ is torsion-free and $\mu$-semistable. Hence we obtain a short exact sequence of sheaves
\[
0 \to \CF_1 \to \CE \to \CG_1 \to 0,
\]
where $\CF_1$ is a self-extension of $\CF$. 

If $\mathrm{Hom}(\CF_1, \CG_1) \neq 0$, we can continue this process until we reach the desired exact sequence \eqref{ses33}.
\end{proof}

\begin{rmk}
One may expect a similar splitting for any $\mu$-semistable sheaf via the Jordan--H\"older filtration (for slope stability). However, the difficulty is that there exist nontrivial morphisms between nonisomorphic $\mu$-stable sheaves with the same slope. For example, there is the inclusion
\[
\CI_Z \hookrightarrow \CO_X
\]
with $\CI_Z$ the ideal sheaf of a $0$-dimensional subscheme $Z \subset X$. Here we use a $\mu$-stable locally free factor $\CF$ to avoid this trouble.
\end{rmk}

\begin{prop}\label{torsionfree}
If $\CE$ is a torsion-free sheaf on $X$, then
\[
c_2(\CE)\in S_{d(\CE)}(X).
\]
\end{prop}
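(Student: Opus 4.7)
The plan is to combine the Harder--Narasimhan filtration with Propositions \ref{Prop11}, \ref{Prop2}, and \ref{splitting}, glued together by Proposition \ref{proptri}, and then to induct on rank so as to reduce to the already-settled cases. The first reduction is standard: for a general torsion-free $\CE$, let $0 = \CE_0 \subset \CE_1 \subset \cdots \subset \CE_n = \CE$ be the Harder--Narasimhan filtration with respect to $H$, with graded pieces $\mathrm{gr}_i = \CE_i/\CE_{i-1}$ that are $\mu$-semistable torsion-free with strictly decreasing slopes. A standard slope comparison gives $\mathrm{Hom}(\CE_{i-1}, \mathrm{gr}_i) = 0$, since any nonzero map from a sheaf whose HN pieces all have slope exceeding $\mu(\mathrm{gr}_i)$ to a $\mu$-semistable sheaf of slope $\mu(\mathrm{gr}_i)$ is forbidden. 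Induction on $i$ using Proposition \ref{proptri} then reduces the problem to the case where $\CE$ itself is $\mu$-semistable and torsion-free.

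For a $\mu$-semistable torsion-free $\CE$, I would argue by induction on the rank. The $\mu$-stable case (which covers the base $\mathrm{rank}(\CE) = 1$) is immediate from Proposition \ref{Prop11}. Otherwise, first pass to the double dual: the sequence $0 \to \CE \to \CE^{\vee\vee} \to \CQ \to 0$ with $\CQ$ zero-dimensional gives a distinguished triangle in which $\mathrm{Hom}(\CQ[-1], \CE^{\vee\vee}) = 0$, so by Proposition \ref{proptri} it suffices to handle the $\mu$-semistable vector bundle $\CE^{\vee\vee}$, whose rank equals that of $\CE$. Apply Proposition \ref{splitting} to obtain $0 \to \CM \to \CE^{\vee\vee} \to \CG \to 0$ with $\CM$ an iterated extension of a $\mu$-stable vector bundle, with $\CG$ torsion-free and $\mu$-semistable (this being visible from the proof of Proposition \ref{splitting}), and with $\mathrm{Hom}(\CM, \CG) = 0$. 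Proposition \ref{Prop2} places $c_2(\CM)$ in $S_{d(\CM)}(X)$, and since $\CM$ is nonzero the rank of $\CG$ is strictly less than that of $\CE$, so the induction hypothesis for torsion-free $\mu$-semistable sheaves gives $c_2(\CG) \in S_{d(\CG)}(X)$. A final application of Proposition \ref{proptri} closes the loop, delivering $c_2(\CE^{\vee\vee}) \in S_{d(\CE^{\vee\vee})}(X)$ and therefore $c_2(\CE) \in S_{d(\CE)}(X)$.

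The main point requiring care is to verify the vanishing hypothesis of Proposition \ref{proptri} at each of its invocations: for the HN triangles it comes from the strict slope drop, for the double-dual triangle from the zero-dimensionality of $\CQ$ against the local freeness of $\CE^{\vee\vee}$, and for the last triangle from clause (iii) of Proposition \ref{splitting}. Since the rank strictly decreases at each recursive call to Proposition \ref{splitting}, the induction terminates, and I do not anticipate any further obstacle beyond this bookkeeping.
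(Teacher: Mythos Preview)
Your proof is correct and follows essentially the same approach as the paper's: both use the Harder--Narasimhan filtration together with Propositions~\ref{Prop11}, \ref{Prop2}, \ref{splitting}, and \ref{proptri}, inducting on rank. The only organizational difference is that the paper runs a single rank induction over all torsion-free sheaves (so that when applying Proposition~\ref{splitting} it only needs $\CG$ torsion-free, as stated), whereas you first peel off the HN reduction and then induct on rank within the $\mu$-semistable class, which forces you to invoke the $\mu$-semistability of $\CG$ from the proof of Proposition~\ref{splitting}; both arrangements work.
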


\begin{proof}
We proceed by induction on the rank of $\CE$. If $\mathrm{rank}(\CE)=1$, then $\CE$ is $\mu$-stable, and Proposition \ref{Prop11} gives the base case of the induction.

Now assume that $\CE$ is torsion-free of rank $r>0$. If $\CE$ is not $\mu$-semistable, then by the Harder--Narasimhan filtration (for slope stability), we have a short exact sequence of sheaves
\[
0 \to \CF \to \CE \to \CG \to 0.
\]
Here $\CF$ and $\CG$ are nonzero and torsion-free, and the slope of every $\mu$-stable factor of $\CF$ is greater than the slope of any $\mu$-stable factor of~$\CG$. In particular, we have $\mathrm{rank}(\CF)<\mathrm{rank}(\CE)$, $\mathrm{rank}(\CG)<\mathrm{rank}(\CE)$, and $\mathrm{Hom}(\CF, \CG) =0$. The induction hypothesis yields
\[
c_2(\CF) \in S_{d(\CF)}(X) \ \ \textrm{and} \ \ c_2(\CG) \in S_{d(\CG)}(X).
\]
Applying Proposition \ref{proptri}, we find $c_2(\CE) \in S_{d(\CE)}(X)$.

It remains to treat the case when $\CE$ is $\mu$-semistable. By the same argument as in Proposition \ref{Prop11}, it suffices to prove Theorem \ref{mainthm} for $\CE^{\vee\vee}$, which is a $\mu$-semistable locally free sheaf satisfying $\mathrm{rank}(\CE)= \mathrm{rank}(\CE^{\vee\vee})$.

Hence we may assume $\CE$ to be $\mu$-semistable and locally free. We apply Proposition \ref{splitting} to $\CE$. Either $\CE$ is an iterated extension of some $\mu$-stable sheaf $\CF$, or the extension (\ref{ses33}) is nontrivial. In the first case, the statement of the proposition holds by Proposition \ref{Prop2}. In the second case, the induction hypothesis and Proposition \ref{proptri} complete the proof.   
\end{proof}

\subsection{Torsion sheaves}

Theorem \ref{mainthm} for torsion sheaves is essentially proven in \cite{OG2}. We begin by recalling the following criterion of O'Grady.

\begin{lem}[{\cite[Claim 0.2]{OG2}}]\label{OGlem}
Let $C$ be an irreducible nonsingular curve of genus $g$, and let $f: C \rightarrow X$ be a nonconstant map. Then
\[
f_\ast \mathrm{CH}_0(C) \subset S_g(X).
\]
\end{lem}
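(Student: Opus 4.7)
The plan is to reduce the lemma to Riemann--Roch on $C$ after fixing a convenient basepoint $c_0 \in C$ whose image in $X$ represents the Beauville--Voisin class. First I would pick $c_0 \in C$ so that $f(c_0)$ lies on a rational curve $R \subset X$: for any ample class $H$ on $X$ there is an irreducible (nodal) rational curve $R \in |mH|$ for some $m$ by classical existence results on projective $K3$ surfaces, and $[R]\cdot[f(C)] = m(H \cdot f(C)) > 0$ forces $R \cap f(C) \neq \emptyset$, so we may take $c_0 \in f^{-1}(R \cap f(C))$; then $[f(c_0)] = [o_X] \in \mathrm{CH}_0(X)$ by \cite{BV}.

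Given any class $\alpha \in \mathrm{CH}_0(C)$ of degree $d$, I would observe that $\alpha + (g-d)c_0$ is a divisor class on $C$ of degree $g$. Riemann--Roch on the genus-$g$ curve $C$ gives $h^0 \geq 1$, so this class is linearly equivalent to an effective divisor $D \subset C$ of degree $g$. Rewriting $\alpha = D + (d-g)c_0$ in $\mathrm{CH}_0(C)$ and applying $f_*$ yields
\[ f_*\alpha = [f_* D] + (d-g)\,[o_X] \in \mathrm{CH}_0(X), \]
where $f_* D$ is an effective 0-cycle of degree $g$ on $X$. This exhibits $f_*\alpha$ in the form $[z] + n[o_X]$ of the definition of $S_g(X)$, so $f_*\alpha \in S_g(X)$.

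The only nonformal input is the existence of a rational curve on $X$ meeting $f(C)$, used in the choice of basepoint; this is the main obstacle, though classical, and all remaining steps are direct manipulations with Riemann--Roch and the pushforward on $\mathrm{CH}_0$.
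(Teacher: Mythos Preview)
Your argument is correct and is essentially the proof O'Grady gives in \cite{OG2}; the paper does not reprove the lemma but simply cites it. One small imprecision: the classical result (Bogomolov--Mumford/Mori--Mukai, as used in \cite{BV}) guarantees that an ample linear system $|H|$ contains a member which is a \emph{sum} of rational curves, not necessarily an irreducible one. This is harmless for your purposes: if $f(C)$ is itself rational then $f_\ast\mathrm{CH}_0(C)\subset\mathbb{Z}\cdot[o_X]$ trivially, and otherwise $f(C)$ meets some rational component $R_i$ of that member since $H\cdot[f(C)]>0$ and distinct irreducible curves have nonnegative intersection. With this adjustment the choice of $c_0$ goes through, and the Riemann--Roch step is exactly as you wrote.
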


Let $\CE$ be a pure $1$-dimensional torsion sheaf on $X$ with Mukai vector
\[
v(\CE)= (0, l, s) \in H^0(X, \mathbb{Z}) \oplus H^2(X, \mathbb{Z}) \oplus H^4(X, \mathbb{Z}).
\]
By Lemma~\ref{OGlem}, we have at worst 
\[
c_2(\CE) \in S_g(X),
\]
where $g= \frac{1}{2}l^2+1$ is the arithmetic genus of the support curve of $\CE$.

On the other hand, we find
\[
d(\CE) = \frac{1}{2}v(\CE)^2+ \dim \mathrm{Hom}(\CE, \CE) \geq \frac{1}{2}l^2+1 = g.
\]
Hence for any pure $1$-dimensional sheaf $\CE$, we have $c_2(\CE) \in S_{d{(\CE)}}(X)$.

Now we can prove Theorem \ref{mainthm} for arbitrary sheaves.

\begin{prop}\label{base}
If $\CE$ is a coherent sheaf on $X$, then 
\[
c_2(\CE) \in S_{d({\CE})}(X).
\]
\end{prop}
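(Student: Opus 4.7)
The plan is to assemble the general case from the three cases already in hand: torsion-free sheaves (Proposition \ref{torsionfree}), pure $1$-dimensional sheaves (treated in the paragraph immediately preceding this proposition), and $0$-dimensional sheaves (for which the bound $c_2(\CQ) \in S_l(X)$ together with $d(\CQ) \geq l$ was already noted in the proof of Proposition \ref{Prop11}). The two nontrivial ingredients for gluing them together are Proposition \ref{proptri} and the additivity of $d$ under extensions with vanishing $\mathrm{Hom}$.

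First I would dispose of the torsion-free quotient by means of the canonical short exact sequence
\[
0 \to \CE_{\mathrm{tor}} \to \CE \to \CE_{\mathrm{free}} \to 0,
\]
where $\CE_{\mathrm{tor}} \subset \CE$ is the torsion subsheaf and $\CE_{\mathrm{free}}$ is torsion-free. Since there are no nonzero maps from a torsion sheaf to a torsion-free sheaf, $\mathrm{Hom}(\CE_{\mathrm{tor}}, \CE_{\mathrm{free}}) = 0$. Proposition \ref{proptri} then reduces the question to verifying the conclusion separately for $\CE_{\mathrm{free}}$ (done by Proposition \ref{torsionfree}) and for $\CE_{\mathrm{tor}}$.

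Next I would peel off the $0$-dimensional part of $\CE_{\mathrm{tor}}$. Taking the maximal $0$-dimensional subsheaf $\CT_0 \subset \CE_{\mathrm{tor}}$ produces
\[
0 \to \CT_0 \to \CE_{\mathrm{tor}} \to \CT_1 \to 0
\]
with $\CT_1$ either zero or pure $1$-dimensional. A nonzero map $\CT_0 \to \CT_1$ would yield a $0$-dimensional subsheaf of the pure sheaf $\CT_1$, a contradiction; hence $\mathrm{Hom}(\CT_0, \CT_1) = 0$ and Proposition \ref{proptri} applies once more, reducing the remaining piece to the $0$-dimensional and pure $1$-dimensional cases already established.

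I do not anticipate any serious obstacle: the argument is essentially a bookkeeping exercise that glues together previously established cases. The only point requiring care is the verification of the $\mathrm{Hom}$-vanishing hypotheses at each step, so that the inequalities $d(\CA) + d(\CB) \leq d(\CE)$ furnished by Lemma \ref{MukLem} propagate cleanly through the two extensions and the individual bounds on the filtration index add up to $d(\CE)$.
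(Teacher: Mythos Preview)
Your proposal is correct and follows essentially the same approach as the paper: both arguments reduce to the torsion-free, pure $1$-dimensional, and $0$-dimensional cases via the two canonical short exact sequences, checking the requisite $\mathrm{Hom}$-vanishing and invoking Proposition~\ref{proptri} at each step. The only cosmetic difference is the order of the two reductions (the paper first handles torsion sheaves and then peels off the torsion subsheaf of a general $\CE$, whereas you do the reverse), which is immaterial.
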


\begin{proof}
Given a torsion sheaf $\CT$, there is a short exact sequence of sheaves
\[
0 \to \CT_0 \to \CT \to \CT_1 \to 0,
\]
where $\CT_0$ is $0$-dimensional and $\CT_1$ is pure and $1$-dimensional. Clearly 
\[
\mathrm{Hom}(\CT_0, \CT_1)=0.
\]
By the discussion above, we have
\[
c_2(\CT_0)\in S_{d(\CT_0)}(X) \ \ \textrm{and} \ \ c_2(\CT_1) \in S_{d(\CT_1)}(X),
\]
Applying Proposition \ref{proptri}, we find $c_2(\CT) \in  S_{d{(\CT)}}(X)$ which proves the statement for torsion sheaves.

Let $\CE$ be an arbitrary sheaf. There is a short exact sequence of sheaves
\[
0 \to \CT \to \CE \to \CF \to 0
\]
with $\CT$ torsion and $\CF$ torsion-free. In particular, we have $\mathrm{Hom}(\CT, \CF)=0$. Since the statement of the proposition holds for both $\CT$ and $\CF$, we conclude by Proposition \ref{proptri} that $c_2(\CE) \in S_{d(\CE)}(X)$.
\end{proof}

\subsection{Proof of Theorem \ref{mainthm} and Corollary \ref{cortomainthm}}
Given a bounded complex $\CE \in D^b(X)$, we define its length by
\[
\ell(\CE)= \mathrm{max}\{ |i-j|: h^i(\CE)\neq 0, \, h^j(\CE) \neq 0\}.
\]
Clearly $\ell(\CE)=0$ if and only if $\CE$ is a (shifted) sheaf. 

\begin{proof}[Proof of Theorem \ref{mainthm}]
We proceed by induction on $\ell(\CE)$. Proposition \ref{base} provides the base case of the induction.

Now consider a bounded complex $\CE \in D^b(X)$. Let $m$ be the largest integer such that $h^m(\CE) \neq 0$. There is a standard distinguished triangle 
\[
\CF \to \CE \to \CG \to \CF[1].
\]
Here $\CG$ is the shifted sheaf $h^m(\CE)[-m]$ and $\CF \in D^b(X)$ is the truncated complex $\tau^{\leq{m-1}}\CE$ which satisfies
\[
\ell(\CF) < \ell(\CE).
\]
By the induction hypothesis, we have
\[
c_2(\CF) \in S_{d(\CF)}(X) \ \ \textrm{and} \ \ c_2(\CG) \in S_{d(\CG)}(X).
\]

Since $\CF$ is concentrated in degrees $<m$ and $\CG$ in degree~$m$, we have $\mathrm{Hom}(\CF, \CG)=0$. Applying Proposition \ref{proptri}, we find $c_2(\CE) \in S_{d(\CE)}(X)$. The proof of Theorem \ref{mainthm} is complete.
\end{proof}

Let $X$ and $X'$ be two $K3$ surfaces. Suppose there is a derived equivalence
\[
\Phi: D^b(X) \xrightarrow{\sim} D^b(X')
\]
with Fourier--Mukai kernel $\CF \in D^b(X \times X')$. The induced isomorphism of (ungraded) Chow groups
\[
\Phi^{\mathrm{CH}}: \mathrm{CH}^\ast(X) \xrightarrow{\sim} \mathrm{CH}^\ast(X')
\]
is given by the correspondence\footnote{Here the square root is taken with respect to the canonical classes $[o_X] \in \mathrm{CH}_0(X)$ and $[o_{X'}] \in \mathrm{CH}_0(X')$; see \cite{Huy}.}
\[v^{\mathrm{CH}}(\mathcal{F}) = \mathrm{ch}(\CF)\sqrt{\mathrm{td}_{X\times X'}} \in \mathrm{CH}^\ast(X \times X').\]

Recall the following theorem of Huybrechts and Voisin.

\begin{thm}[{\cite[Theorem 2]{Huy}} and {\cite[Corollary 1.10]{V1}}] \label{Huybrechts2}
The isomorphism $\Phi^{\mathrm{CH}}$ preserves the Beauville--Voisin ring.
\end{thm}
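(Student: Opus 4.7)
The plan is to reduce Theorem~\ref{Huybrechts2} to Theorem~\ref{Huybrechts}, exploiting the fact that a derived equivalence carries spherical objects to spherical objects. Recall that $R^*(X) = \BZ \cdot 1 \oplus \Pic(X) \oplus \BZ\cdot[o_X]$ as an abelian group, and that by construction of $\Phi^{\mathrm{CH}}$ via its Fourier--Mukai kernel $\CF$, one has the compatibility $\Phi^{\mathrm{CH}}(v(\CE)) = v(\Phi(\CE))$ in $CH^*(X')$ for every $\CE \in D^b(X)$. Thus the whole strategy reduces to finding a $\BZ$-generating set of $R^*(X)$ of the form $\{v(\CE_i)\}$ with each $\CE_i$ spherical.

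The key observation is that spherical-ness depends only on the $\mathrm{Ext}$-algebra, hence $\Phi(\CE)$ is spherical whenever $\CE$ is. Applying Theorem~\ref{Huybrechts} to $\Phi(\CE)$ gives $c_2(\Phi(\CE)) \in \BZ\cdot[o_{X'}]$; combined with $\mathrm{rk}(\Phi(\CE)) \in \BZ = R^0(X')$ and $c_1(\Phi(\CE)) \in \Pic(X') = R^1(X')$, this shows $v(\Phi(\CE)) = \Phi^{\mathrm{CH}}(v(\CE)) \in R^*(X')$. I would then produce a generating family for $R^*(X)$ from spherical objects: the structure sheaf $\CO_X$ yields $v(\CO_X) = (1,0,[o_X])$; each line bundle $L$ yields $v(L) = (1, c_1(L), \tfrac{1}{2}c_1(L)^2 + [o_X])$ (integral because the K3 intersection form is even); and the structure sheaf $\CO_C$ of a smooth rational curve $C \subset X$ (spherical since $N_{C/X} = \CO_C(-2)$) yields $v(\CO_C) = (0,[C],[o_X])$. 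Linear combinations such as $v(\CO_X) + v(\CO_X(C)) - v(\CO_C) = (2,0,0)$ and $v(\CO_X) + v(\CO_C) - v(\CO_X(C)) = (0,0,2[o_X])$ give a foothold, and by throwing in spherical bundles of higher rank one separates the three $\BZ$-summands of $R^*(X)$. Once $1$, $\Pic(X)$ and $[o_X]$ are each written as $\BZ$-combinations of spherical Mukai vectors, one concludes $\Phi^{\mathrm{CH}}(R^*(X)) \subset R^*(X')$; applying the symmetric argument to $\Phi^{-1}$ upgrades this to an equality.

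The main obstacle is the \emph{integrality} of the last step: line bundles alone only produce the index-two sublattice of $R^*(X)$ (one easily gets $2[o_X]$ but not $[o_X]$), and on a generic projective $K3$ surface of Picard rank one with large polarization there need not exist any smooth rational curve to provide $\CO_C$. Overcoming this is exactly the content of Voisin's extension \cite[Corollary~1.10]{V1}: rather than relying on $\CO_C$, one exhibits spherical vector bundles with prescribed Mukai vector $(r,D,s)$ for coprime combinations of $r$ and $s$, appealing to the nonemptiness and zero-dimensionality of moduli of stable objects with Mukai self-intersection $-2$, and then uses Theorem~\ref{Voisin} in a deformation argument to cover all projective $K3$ surfaces. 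A secondary (purely formal) point is that $\Phi^{\mathrm{CH}}$ need not preserve the Chow grading, so one must verify that the spanning argument is done inside the ungraded $R^*(X)$; this is automatic from the setup.
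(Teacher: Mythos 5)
The paper does not actually prove Theorem \ref{Huybrechts2}: it is imported verbatim from \cite[Theorem 2]{Huy} together with \cite[Corollary 1.10]{V1}, so your attempt has to be measured against Huybrechts' argument rather than against anything in this text. Your overall strategy --- spherical objects are preserved by $\Phi$, the compatibility $\Phi^{\mathrm{CH}}(v^{\mathrm{CH}}(\CE)) = v^{\mathrm{CH}}(\Phi(\CE))$ puts their Mukai vectors into $R^*(X')$ via Theorem \ref{Huybrechts}, and one then tries to span $R^*(X)$ by spherical Mukai vectors --- is the correct starting point, and your explicit computations with $\CO_X$, line bundles, and $\CO_C$ are accurate.

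There is, however, a genuine gap at exactly the step you flag as the ``main obstacle,'' and the repair you propose does not close it. The $\BZ$-span of the Mukai vectors of \emph{all} spherical objects is in general a proper finite-index subgroup of $\BZ \oplus \Pic(X) \oplus \BZ[o_X]$, no matter which higher-rank spherical bundles you adjoin. Concretely, if $\Pic(X) = \BZ H$ with $H^2 = 2d$ and $d$ even, then every spherical Mukai vector $(r, nH, s)$ satisfies $rs = dn^2 + 1$, which is odd, so $r$ and $s$ are both odd; the additive functional $(r, nH, s[o_X]) \mapsto r + s \bmod 2$ therefore kills the entire span while taking the value $1$ on $[o_X]$ (for $d = 2$ one can check the index is in fact $4$). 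So ``coprime combinations of $r$ and $s$'' cannot produce $[o_X]$ integrally, and this is also not what \cite[Corollary 1.10]{V1} supplies: Voisin's corollary removes the Picard-rank hypothesis from the \emph{input} (Theorem \ref{Huybrechts} for spherical bundles), not the integrality of your spanning step. The missing idea in Huybrechts' actual proof is that a finite-index subgroup suffices. One compares $\Phi^{\mathrm{CH}}$ with the cohomological transform $\Phi^H$: the defect $\delta(\beta) = \Phi^{\mathrm{CH}}(\beta) - \lambda\bigl(\Phi^H(\mathrm{cl}(\beta))\bigr)$, where $\lambda$ is the tautological lift of the algebraic Mukai lattice of $X'$ into $R^*(X')$, is a group homomorphism from $R^*(X)$ to the degree-zero subgroup of $\mathrm{CH}_0(X')$, and it vanishes on the span of spherical Mukai vectors. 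Since that target is divisible and torsion-free (Ro{\u\i}tman), any homomorphism into it vanishing on a finite-index subgroup vanishes identically, whence $\Phi^{\mathrm{CH}}(R^*(X)) \subset R^*(X')$. With that divisibility lemma inserted your argument goes through; without it, the proof as written never establishes $\Phi^{\mathrm{CH}}([o_X]) \in R^*(X')$.
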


\begin{proof}[Proof of Corollary \ref{cortomainthm}]
Since $\Phi$ is a derived equivalence, we only need to prove that
\[
\Phi^{\mathrm{CH}}(\widetilde{S}_i(X)) \subset \widetilde{S}_i(X').
\]
Since $\Phi^{\mathrm{CH}}$ preserves the Beauville--Voisin ring by Theorem \ref{Huybrechts2}, it suffices to show that for any effective $0$-cycle $z = x_1 + \cdots + x_i$, we have
\begin{equation}\label{lasteqn}
\Phi^{\mathrm{CH}}([z]) \in \widetilde{S}_i(X').
\end{equation}
Further, we may assume $x_1, \dots, x_i$ distinct, since multiplicities result in $[z] \in S_{i'}(X)$ for some $i' < i$.

Let $\CE$ be the direct sum of skyscraper sheaves \[\BC_{x_1} \oplus \cdots \oplus \BC_{x_i}.\]
Then $c_2(\CE)=[z]$ and $d(\Phi(\CE))= d(\CE) = i$. Applying Theorem \ref{mainthm}, we find
\[
c_2(\Phi(\CE))\in S_i(X'),
\]
which implies (\ref{lasteqn}).
\end{proof}

\section{Moduli spaces of stable objects} \label{msso}

In this section, we discuss the Beauville--Voisin conjecture in the case of moduli spaces of stable objects. We prove Proposition \ref{presfil} and Theorem \ref{coiso}.

\subsection{Independence of modular interpretations}

The proof of Proposition \ref{presfil} uses Bayer and Macr\`i's work \cite{BM1, BM} on the birational transforms of moduli spaces of stable objects.

Let $X$ be a $K3$ surface. Recall that given a primitive Mukai vector\footnote{In this section we always assume $\mathbf{v}^2 > 0$, so that the moduli space $M_\sigma(\mathbf{v})$ is of dimension $> 2$. This assumption is crucial in Theorem \ref{BayerMacri}. See Section \ref{dimtwo} for a discussion of the dimension $2$ case.} $\mathbf{v}$ with $\mathbf{v}^2 > 0$, and a generic stability condition $\sigma \in \mathrm{Stab}^{\dag}(X)$ with respect to~$\mathbf{v}$, there is a moduli space $M_\sigma(\mathbf{v})$ of $\sigma$-stable objects in $D^b(X)$.

Bayer and Macr\`i realized all holomorphic symplectic birational models of~$M_\sigma(\mathbf{v})$ as other moduli spaces of stable objects. Their following theorem describes the procedure concretely.

\begin{thm}[{\cite[Corollary 1.3]{BM1}}] \label{BayerMacri}
With the notation above, let $(X', \sigma', \mathbf{v}')$ be another triple. The moduli spaces $M_\sigma(\mathbf{v})$ and $M_{\sigma'}(\mathbf{v}')$ are birational if and only if there exists a derived (anti-)equivalence 
\[
\Phi: D^b(X) \xrightarrow{\sim} D^b(X')
\]
which sends $\mathbf{v}$ to $\mathbf{v}'$ and induces an isomorphism
\[\Sigma: U \xrightarrow{\sim} U'\]
between two nonempty open subsets $U \subset M_\sigma(\mathbf{v})$ and $U' \subset M_{\sigma'}(\mathbf{v}')$.\footnote{Here $\Sigma$ sends an object $\mathcal{E} \in U$ to $\Phi(\mathcal{E}) \in U'$.}
\end{thm}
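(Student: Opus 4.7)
The plan is to establish the two directions of the equivalence separately. The ``if'' direction is immediate: if $\Phi$ induces an isomorphism $\Sigma : U \xrightarrow{\sim} U'$ between nonempty open subsets of the smooth irreducible projective varieties $M_\sigma(\mathbf{v})$ and $M_{\sigma'}(\mathbf{v}')$, then $\Sigma$ is by definition a birational map.

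The ``only if'' direction is the substantial one, and my approach would combine three major ingredients. \emph{First}, by the global Torelli theorem for hyperk\"ahler manifolds of $K3^{[n]}$-type (Verbitsky, refined by Markman), the given birational map induces a Hodge isometry of the second cohomology lattices. Using Mukai's identification of $H^2(M_\sigma(\mathbf{v}), \mathbb{Z})$ with $\mathbf{v}^\perp$ inside the algebraic Mukai lattice, together with Markman's description of the monodromy group, this isometry extends to a Hodge isometry
\[
H^*_{\mathrm{alg}}(X, \mathbb{Z}) \xrightarrow{\sim} H^*_{\mathrm{alg}}(X', \mathbb{Z})
\]
sending $\mathbf{v}$ to $\mathbf{v}'$. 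The sign ambiguity --- whether the isometry respects or reverses orientation on the positive part of the lattice --- is precisely what accounts for the possible ``anti''-equivalence case.

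\emph{Second}, invoke the derived Torelli theorem for $K3$ surfaces (Mukai--Orlov): any such Hodge isometry of Mukai lattices is induced by a derived (anti-)equivalence $\Phi : D^b(X) \xrightarrow{\sim} D^b(X')$. This produces the candidate $\Phi$ sending $\mathbf{v}$ to $\mathbf{v}'$, unique up to shifts and twists by line bundles.

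\emph{Third} --- and this is where the main obstacle lies --- one must match $\Phi$ with the modular interpretations, i.e., show that $\Phi$ sends $\sigma$-stable objects to $\sigma'$-stable objects over a dense open locus. The key technical tool here is the Bayer--Macr\`i map $\ell : \mathrm{Stab}^{\dagger}(X) \to \mathrm{NS}(M_\sigma(\mathbf{v}))_{\mathbb{R}}$, which identifies the wall-and-chamber structure of $\mathrm{Stab}^{\dagger}(X)$ with the decomposition of the movable cone of $M_\sigma(\mathbf{v})$ into nef cones of its birational hyperk\"ahler models. Using this dictionary, the nef cone of $M_{\sigma'}(\mathbf{v}')$ corresponds via the given birational map and $\Phi_*$ to a specific chamber in $\mathrm{Stab}^{\dagger}(X)$; choosing a stability condition $\tau$ in that chamber and comparing $M_\tau(\mathbf{v})$ with $M_{\sigma'}(\mathbf{v}')$ through $\Phi$ should yield the desired open identification $\Sigma$. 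Controlling what happens at \emph{totally semistable walls} --- where both the heart of the $t$-structure used and the birational type of the moduli space can jump --- is the most delicate aspect, and requires the full wall-crossing classification (flopping, divisorial, or fake walls) of Bayer--Macr\`i, together with a careful construction of birational modifications as derived autoequivalences composed with $\Phi$.
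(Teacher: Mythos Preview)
This theorem is not proved in the paper; it is stated with attribution to Bayer--Macr\`i \cite[Corollary~1.3]{BM1} and then used as a black box in the proof of Proposition~\ref{presfil}. There is therefore no argument in the present paper against which to compare your proposal.

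Your outline is nonetheless a reasonable high-level sketch of the ingredients in Bayer--Macr\`i's work, though the logical order there is somewhat inverted relative to yours. The wall-crossing analysis and the map $\ell$ from $\mathrm{Stab}^\dagger(X)$ to the movable cone --- your third step --- is the core of their papers and is established first; it shows directly that every smooth $K$-trivial birational model of $M_\sigma(\mathbf{v})$ is realized as $M_\tau(\mathbf{v})$ for some generic $\tau$ in another chamber of $\mathrm{Stab}^\dagger(X)$, which already settles the case $X' = X$ without Torelli-type input. The Hodge-theoretic and derived Torelli arguments (your first two steps) then enter only to pass from this case to an arbitrary Fourier--Mukai partner $X'$, via Markman's description of the monodromy and the Mukai--Orlov theorem, much as you indicate.
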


It is well-known that the $\mathrm{CH}_0$-group is invariant under birational transforms of nonsingular projective varieties.\footnote{In \cite{R}, Rie{\ss} proved that birational holomorphic symplectic varieties have isomorphic Chow rings.} The statement can be made slightly more precise.

\begin{lem} \label{point}
Let $f: V \dashrightarrow V'$ be a birational map between nonsingular projective varieties, and let
\[f_*: \mathrm{CH}_0(V) \xrightarrow{\sim} \mathrm{CH}_0(V')\]
be the induced isomorphism of Chow groups. Then for any point $x \in V$, there exists a point $x' \in V'$ such that
\[[x'] = f_*([x]) \in \mathrm{CH}_0(V').\]
\end{lem}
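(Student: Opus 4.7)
The plan is to reduce to a common resolution. By resolution of indeterminacy, there exists a nonsingular projective variety $\tilde V$ fitting into a diagram of proper birational morphisms
\[ V \xleftarrow{\,p\,} \tilde V \xrightarrow{\,q\,} V' \]
with $f = q \circ p^{-1}$ as rational maps. The first fact to invoke is the classical statement that for a proper birational morphism between nonsingular projective varieties, pushforward is an isomorphism on $\mathrm{CH}_0$. Thus both $p_*$ and $q_*$ are isomorphisms, and the canonical isomorphism $f_*$ from the statement agrees with $q_* \circ (p_*)^{-1}$.

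Given $x \in V$, I would simply pick any closed point $\tilde x$ in the nonempty fiber $p^{-1}(x)$ and set $x' := q(\tilde x) \in V'$. Since we work over $\mathbb{C}$, the residue field extension $k(x) \to k(\tilde x)$ is trivial, so $p_*[\tilde x] = [x]$ in $\mathrm{CH}_0(V)$, and hence $(p_*)^{-1}[x] = [\tilde x]$. Pushing forward by $q$ gives
\[ f_*([x]) = q_*\bigl((p_*)^{-1}[x]\bigr) = q_*[\tilde x] = [x'], \]
which is exactly the desired conclusion.

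I do not anticipate any real obstacle: the only nontrivial input is the injectivity of $p_*$ on $\mathrm{CH}_0$, which is what ensures that $(p_*)^{-1}[x]$ is well-defined and represented by $[\tilde x]$ regardless of which closed point of $p^{-1}(x)$ we chose. This injectivity is a standard consequence of weak factorization, reducing to blowups along smooth centers, where fibers of the exceptional divisor are projective bundles and all their closed points are rationally equivalent in the total space. Equivalently, one may appeal directly to the birational invariance of $\mathrm{CH}_0$ for nonsingular projective varieties. Once this is in hand, the argument above is a purely formal manipulation of the resolution diagram.
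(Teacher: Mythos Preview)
Your proposal is correct and follows essentially the same approach as the paper: both pass to a common nonsingular resolution $\widetilde V$ with projections $p,q$, realize $f_*$ as $q_*\circ(p_*)^{-1}$ (the paper writes $q_*p^*$), and appeal to weak factorization to reduce the needed fact about $p$ to the case of a single blow-up along a smooth center. Your version is slightly more explicit in constructing $x'$ as $q(\tilde x)$ for an arbitrary $\tilde x\in p^{-1}(x)$, but the underlying argument is the same.
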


\begin{proof}
Consider a resolution
\[\begin{tikzcd}
& \widetilde{V} \arrow{dl}[swap]{p} \arrow{dr}{q} \\
V \arrow[dashed]{rr}{f} & & V'
\end{tikzcd}\]
with $\widetilde{V}$ nonsingular and projective. Then $f_*$ is realized as $q_*p^*$. By weak factorization, both $p$ and $q$ can be taken a sequence of blow-ups and blow-downs with nonsingular centers. We are reduced to the case of a blow-up, for which the statement is obvious.
\end{proof}

Let $(X, \sigma, \mathbf{v})$ and $(X', \sigma', \mathbf{v}')$ be such that $M_\sigma(\mathbf{v})$ and $M_{\sigma'}(\mathbf{v}')$ are birational. By Theorem \ref{BayerMacri}, a derived (anti-)equivalence
\[\Phi: D^b(X) \xrightarrow{\sim} D^b(X')\]
induces a birational map
\[\Sigma: M_\sigma(\mathbf{v}) \dashrightarrow M_{\sigma'}(\mathbf{v}'),\]
which identifies two nonempty open subsets $U \subset M_\sigma(\mathbf{v})$ and $U' \subset M_{\sigma'}(\mathbf{v}')$. By further composing with $R\mathcal{H}om(-, \mathcal{O}_X)$, we may assume that $\Phi$ is a derived equivalence.

Let $\mathcal{E}$ be an object in $M_\sigma(\mathbf{v})$. By Lemma \ref{point}, there exists an object $\mathcal{F}$ in $M_{\sigma'}(\mathbf{v}')$ such that
\begin{equation} \label{AAAA}
[\mathcal{F}] = \Sigma_*([\mathcal{E}]) \in \mathrm{CH}_0(M_{\sigma'}(\mathbf{v}')).
\end{equation}

\begin{lem} \label{funct}
With the notation above, for any pair of objects $\mathcal{E} \in M_\sigma(\mathbf{v})$ and $\mathcal{F} \in M_{\sigma'}(\mathbf{v}')$ satisfying \eqref{AAAA}, we have\footnote{Recall that $v^{\mathrm{CH}}(\mathcal{E}) = \mathrm{ch}(\mathcal{E})\sqrt{\mathrm{td}_X} \in \mathrm{CH}^*(X)$ for $\mathcal{E} \in D^b(X)$.}
\[v^{\mathrm{CH}}(\mathcal{F}) = \Phi^{\mathrm{CH}}(v^{\mathrm{CH}}(\mathcal{E})) \in \mathrm{CH}^*(X').\]
\end{lem}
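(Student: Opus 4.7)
The plan is to combine the compatibility of the Mukai vector with Fourier--Mukai transforms, its factorization through $\mathrm{CH}_0$ of the moduli space, and Theorem \ref{BayerMacri}. First, let $\mathcal{K} \in D^b(X \times X')$ be a Fourier--Mukai kernel for $\Phi$. By Grothendieck--Riemann--Roch, the identity
\[
v^{\mathrm{CH}}(\Phi(\mathcal{G})) \;=\; \Phi^{\mathrm{CH}}(v^{\mathrm{CH}}(\mathcal{G})) \;\in\; \mathrm{CH}^*(X')
\]
holds for every $\mathcal{G} \in D^b(X)$. Hence the lemma reduces to the equality $v^{\mathrm{CH}}(\mathcal{F}) = v^{\mathrm{CH}}(\Phi(\mathcal{E}))$ in $\mathrm{CH}^*(X')$.

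Second, I would show that the Mukai vector of a stable object depends only on its rational equivalence class in the moduli. Indeed, fixing a (quasi-)universal family $\mathcal{U}'$ on $M_{\sigma'}(\mathbf{v}') \times X'$ and writing $\gamma' := v^{\mathrm{CH}}(\mathcal{U}')$ (normalized over $\mathbb{Q}$ by dividing by the similitude when only a quasi-universal family exists), the correspondence $\gamma'$ sends a class $\alpha \in \mathrm{CH}_0(M_{\sigma'}(\mathbf{v}'))$ to $(p_{X'})_*(\gamma' \cdot p_{M'}^*\alpha) \in \mathrm{CH}^*(X')$, and on a point class $[\mathcal{F}]$ it recovers $v^{\mathrm{CH}}(\mathcal{F})$; an analogous statement holds on $M_\sigma(\mathbf{v})$. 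In particular, when $\mathcal{E}$ lies in the open subset $U$ of Theorem \ref{BayerMacri} the lemma is already complete: $\Phi(\mathcal{E}) = \Sigma(\mathcal{E}) \in M_{\sigma'}(\mathbf{v}')$, and $[\Phi(\mathcal{E})] = \Sigma_*([\mathcal{E}]) = [\mathcal{F}]$, so the factorization just established gives $v^{\mathrm{CH}}(\Phi(\mathcal{E})) = v^{\mathrm{CH}}(\mathcal{F})$.

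For a general $\mathcal{E} \in M_\sigma(\mathbf{v})$, I would construct the relative Fourier--Mukai family $\mathcal{V}$ on $M_\sigma(\mathbf{v}) \times X'$ obtained by applying $\Phi$ fiberwise to a (quasi-)universal family $\mathcal{U}$ on $M_\sigma(\mathbf{v}) \times X$, so that the fiber of $\mathcal{V}$ over $\mathcal{E}$ is $\Phi(\mathcal{E})$. Both $\mathcal{V}$ and $\mathcal{U}'$ pull back to a smooth projective resolution $\widetilde{M}$ of the graph of $\Sigma$, equipped with birational morphisms $p: \widetilde{M} \to M_\sigma(\mathbf{v})$ and $q: \widetilde{M} \to M_{\sigma'}(\mathbf{v}')$. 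On the dense open $p^{-1}(U) \subset \widetilde{M}$, Theorem \ref{BayerMacri} identifies $(p \times \mathrm{id})^*\mathcal{V}$ and $(q \times \mathrm{id})^*\mathcal{U}'$ as two quasi-universal families for the same family of $\sigma'$-stable objects, so they differ by a twist by a line bundle pulled back from $\widetilde{M}$. A universal-property / uniqueness argument then extends the identification of (normalized) Mukai vectors to all of $\widetilde{M} \times X'$. Restricting this global identity to the fiber over any $\tilde{\mathcal{E}} \in p^{-1}(\mathcal{E})$ yields $v^{\mathrm{CH}}(q(\tilde{\mathcal{E}})) = v^{\mathrm{CH}}(\Phi(\mathcal{E}))$, and since $[q(\tilde{\mathcal{E}})] = \Sigma_*([\mathcal{E}]) = [\mathcal{F}]$ in $\mathrm{CH}_0(M_{\sigma'}(\mathbf{v}'))$, the second step concludes the proof.

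The main obstacle is the extension of the identification from $p^{-1}(U)$ to all of $\widetilde{M}$: over the exceptional locus of $p$ the fiber of $(p \times \mathrm{id})^*\mathcal{V}$ need not be $\sigma'$-stable, so the moduli functor of $M_{\sigma'}(\mathbf{v}')$ does not directly compare the two families there. The delicate point is to control this discrepancy using the birationality of both $p$ and $q$, the classical invariance of $\mathrm{CH}_0$ under such birational morphisms, and the fact that any two quasi-universal families on an irreducible base agree up to a single line-bundle twist globally rather than merely over a dense open subset.
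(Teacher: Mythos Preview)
Your ingredients are correct---the factorization of $v^{\mathrm{CH}}$ through $\mathrm{CH}_0$ of the moduli via the (quasi-)universal family, and the compatibility $v^{\mathrm{CH}} \circ \Phi = \Phi^{\mathrm{CH}} \circ v^{\mathrm{CH}}$---but you assemble them in an unnecessarily complicated way, and the ``main obstacle'' you identify is a genuine gap you have not closed. The claim that two quasi-universal families on an irreducible base agree globally up to a line-bundle twist does not hold in your setting: over the exceptional locus of $p$, the family $(p\times\mathrm{id})^*\mathcal{V}$ parametrizes objects $\Phi(\mathcal{E})$ that need not be $\sigma'$-stable, while $(q\times\mathrm{id})^*\mathcal{U}'$ always parametrizes $\sigma'$-stable objects; these cannot differ by a mere line-bundle twist. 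The uniqueness statement for quasi-universal families applies to families of stable objects classified by the moduli, not to arbitrary flat families over a base mapping to the moduli.

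The paper's proof bypasses all of this by a much simpler device that you already have in hand. Since $U \subset M_\sigma(\mathbf{v})$ is a nonempty open subset of a smooth projective variety, every class in $\mathrm{CH}_0(M_\sigma(\mathbf{v}))$ can be supported on $U$: write $[\mathcal{E}] = \sum_j a_j [\mathcal{E}_j]$ with $\mathcal{E}_j \in U$. Your own step~2 then gives $v^{\mathrm{CH}}(\mathcal{E}) = \sum_j a_j v^{\mathrm{CH}}(\mathcal{E}_j)$. Applying $\Sigma_*$ to the same decomposition yields $[\mathcal{F}] = \sum_j a_j [\Phi(\mathcal{E}_j)]$ in $\mathrm{CH}_0(M_{\sigma'}(\mathbf{v}'))$, and by step~2 on the $X'$ side, $v^{\mathrm{CH}}(\mathcal{F}) = \sum_j a_j v^{\mathrm{CH}}(\Phi(\mathcal{E}_j)) = \Phi^{\mathrm{CH}}\big(\sum_j a_j v^{\mathrm{CH}}(\mathcal{E}_j)\big) = \Phi^{\mathrm{CH}}(v^{\mathrm{CH}}(\mathcal{E}))$. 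No resolution of the graph and no global comparison of families is needed: the moving lemma for $0$-cycles lets you reduce immediately to the open set $U$ where your argument already works.
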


\begin{proof}
Since any class in $\mathrm{CH}_0(M_\sigma(\mathbf{v}))$ is supported on $U$, we may write
\[[\mathcal{E}] = \sum_ja_j[\mathcal{E}_j] \in \mathrm{CH}_0(M_\sigma(\mathbf{v}))\]
for some $\mathcal{E}_j \in U$. By the (quasi-)universal family on $M_\sigma(\mathbf{v}) \times X$, we have
\begin{equation} \label{GGGG}
v^{\mathrm{CH}}(\mathcal{E}) = \sum_ja_jv^{\mathrm{CH}}(\mathcal{E}_j) \in \mathrm{CH}^*(X).
\end{equation}
On the other hand, it is clear from the definition that
\[
[\mathcal{F}] = \sum_ja_j[\Sigma(\mathcal{E}_j)] = \sum_ja_j[\Phi(\mathcal{E}_j)] \in \mathrm{CH}_0(M_{\sigma'}(\mathbf{v}')).
\]
By the (quasi-)universal family on $M_{\sigma'}(\mathbf{v}') \times X'$, we have
\begin{equation} \label{HHHH}
v^{\mathrm{CH}}(\mathcal{F}) = \sum_ja_jv^{\mathrm{CH}}(\Phi(\mathcal{E}_j)) \in \mathrm{CH}^*(X').
\end{equation}
Combining \eqref{GGGG} and \eqref{HHHH} and using the commutative diagram
\[\begin{tikzcd}
D^b(X) \arrow{r}{\Phi} \arrow{d}[swap]{v^{\mathrm{CH}}} & D^b(X') \arrow{d}{v^{\mathrm{CH}}} \\
\mathrm{CH}^*(X) \arrow{r}{\Phi^{\mathrm{CH}}} & \mathrm{CH}^*(X'),
\end{tikzcd}\]
we find
\[
v^{\mathrm{CH}}(\mathcal{F}) = \Phi^{\mathrm{CH}}(\sum_ja_jv^{\mathrm{CH}}(\mathcal{E}_j)) \\
= \Phi^{\mathrm{CH}}(v^{\mathrm{CH}}(\mathcal{E})) \in \mathrm{CH}^*(X'). \qedhere
\]
\end{proof}

\begin{proof}[Proof of Proposition \ref{presfil}]
Let $\mathcal{E}$ be an object in $M_\sigma(\mathbf{v})$ such that
\[
c_2(\mathcal{E}) \in S_i(X).
\]
Equivalently, we have
\[v^{\mathrm{CH}}(\mathcal{E}) \in \widetilde{S}_i(X).\]
By Lemma \ref{point}, there exist an object $\mathcal{F}$ in $M_{\sigma'}(\mathbf{v}')$ satisfying
\[[\mathcal{F}] = \Sigma_*([\mathcal{E}]) \in \mathrm{CH}_0(M_{\sigma'}(\mathbf{v}')).\]
Applying Lemma \ref{funct}, we find
\[
v^{\mathrm{CH}}(\mathcal{F}) = \Phi^{\mathrm{CH}}(v^{\mathrm{CH}}(\mathcal{E})) \in \mathrm{CH}^*(X').
\]
Since $\Phi^{\mathrm{CH}}$ preserves the filtration $\widetilde{S}_\bullet$ by Corollary \ref{cortomainthm}, we conclude that
\[v^{\mathrm{CH}}(\mathcal{F}) \in \widetilde{S}_i(X'),\]
or equivalently,
\[c_2(\mathcal{F}) \in S_i(X').\]
The proposition then follows from the definition of $S_\bullet\mathrm{CH}_0$.
\end{proof}

\subsection{The Beauville--Voisin filtration}

As stated in Theorem \ref{coiso}, we compare two proposed filtrations on the $\mathrm{CH}_0$-group of a moduli space of stable objects.

Let $X$ be a $K3$ surface, let $M = M_\sigma(\mathbf{v})$ be a moduli space of stable objects in $D^b(X)$ of dimension $2d = 2d(\mathbf{v})$, and let $X^{[d]}$ be the Hilbert scheme of $d$ points on $X$. Consider the incidence
\[R = \{(\mathcal{E}, \xi) \in M \times X^{[d]} : c_2(\mathcal{E}) = [\mathrm{Supp}(\xi)] + c[o_X] \in \mathrm{CH}_0(X)\},\]
where $\mathrm{Supp}(\xi)$ is the support of $\xi$ and $c \in \BZ$ is a constant determined by the Mukai vector $\mathbf{v}$. This incidence has already appeared in \cite{OG2, V1}.

A standard argument using Hilbert schemes shows that $R$ is a countable union of Zariski-closed subsets of $M \times X^{[d]}$. Let
\[p_M : R \to M \ \ \textrm{and} \ \ p_{X^{[d]}} : R \to X^{[d]}\]
denote the two projections. By Theorem \ref{mainconj} for $X^{[d]}$ or an explicit calculation, all points on the same fiber of $p_M$ have the same class in $\mathrm{CH}_0(X^{[d]})$. Similarly, by Theorem \ref{mainconj} for $M$, all points on the same fiber of $p_{X^{[d]}}$ have the same class in $\mathrm{CH}_0(M)$.

An important consequence of Theorem \ref{mainthm} is that $p_M$ is dominant. Then, by the argument in \cite[Proposition 1.3]{OG2} (see also \cite[Corollary 3.4]{V1}), we also know that $p_{X^{[d]}}$ is dominant. More precisely, there exists a component $R_0 \subset R$ which dominates both $M$ and $X^{[d]}$. Note that $M$ and $X^{[d]}$ have the same dimension.

Further, up to taking hyperplane sections\footnote{This step is unnecessary: by \cite[Theorem 1.3]{V2}, the orbit of a very general point on~$M$ under rational equivalence is discrete.}, we may assume that $R_0$ is generically finite over both $M$ and $X^{[d]}$. To summarize, we have a diagram
\begin{equation} \label{diag}
\begin{tikzcd}
& R_0 \arrow{dl}[swap]{p_M} \arrow{dr}{p_{X^{[d]}}} \\
U \subset M \phantom{\supset U} & & \phantom{V \subset} X^{[d]} \supset V,
\end{tikzcd}
\end{equation}
where $U \subset M$ and $V \subset X^{[d]}$ are nonempty open subsets over which $p_M$ and $p_{X^{[d]}}$ are finite.

We recall two density results on $X$ and $X^{[d]}$.

\begin{lem}[{\cite[Lemma 2.3]{V1}}; see also {\cite[Lemma 6.3]{H2}}] \label{dense1}
The union of constant cycle curves in $X$ is Zariski-dense.
\end{lem}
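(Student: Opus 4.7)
The plan is to exhibit a large supply of constant cycle curves by noting that every rational curve on $X$ is one. More precisely, if $\nu: \widetilde{C} \to C \subset X$ is the normalization of a (possibly singular) rational curve, then $\widetilde{C} \cong \mathbb{P}^1$ and the composition $\widetilde{C} \to X$ is constant in $\mathrm{CH}_0(X)$. By the Beauville--Voisin theorem, the common class is $[o_X]$. Hence every rational curve on $X$ is automatically a constant cycle curve, and it suffices to prove that the union of rational curves on $X$ is Zariski-dense.

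Next I would reduce this to known results about rational curves on $K3$ surfaces. The classical Bogomolov--Mumford theorem produces one rational curve on every projective $K3$, but density requires more. Fix an ample divisor $H$ on $X$ and, for each $n \geq 1$, let $U_n \subset X$ denote the union of the images of rational curves lying in $|nH|$. Each $U_n$ is constructible, and it is enough to show that $\bigcup_n U_n$ is Zariski-dense. The existence of rational (in fact nodal) curves in $|nH|$ for infinitely many $n$ is given by Chen's theorem for generic polarized $K3$ surfaces, and by subsequent work of Bogomolov--Hassett--Tschinkel, Li--Liedtke, and Chen--Gounelas--Liedtke in greater generality. A dimension count then shows that for $n \gg 0$ the universal rational curve over the relevant component of the Hilbert scheme of $|nH|$ dominates $X$, so $\bigcup_n U_n$ cannot be contained in any proper Zariski-closed subset of $X$.

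As an alternative, more self-contained route I would exploit elliptic fibrations. If $\pi: X \to \mathbb{P}^1$ admits a section (which can always be arranged after passing to a $K3$ with higher Picard rank, Zariski-dense in the moduli of polarized $K3$ surfaces), then the closure of the $n$-torsion in the smooth fibers produces multisections whose points are $n$-torsion equivalent to $n[o_X]$ in $\mathrm{CH}_0(X)$. Combining with Ro{\u\i}tman's theorem, one obtains actual constant cycle curves swept out by these torsion multisections, which are Zariski-dense in $X$ because the $n$-torsion points of a smooth elliptic curve are dense in the curve as $n$ varies. The case of a general $X$ is then handled by specializing $X$ inside the moduli of polarized $K3$ surfaces to an elliptic $K3$ and transporting the resulting rational/constant cycle curves back via the fact that rational curves on $K3$ surfaces deform in their parameter space (Ran).

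The main obstacle is precisely the passage from existence of one rational curve to density of rational curves on an arbitrary projective $K3$ surface. Historically this required substantial input beyond Bogomolov--Mumford, and in the present proof I would cite the above literature on rational curves on $K3$ surfaces rather than reprove density from scratch. Given this input, the conclusion of the lemma is immediate from Beauville--Voisin.
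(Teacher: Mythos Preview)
The paper does not prove this lemma; it is quoted from \cite{V1} and \cite{H2}, so there is no in-paper argument to compare against beyond those references.

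Your second route---torsion multisections of an elliptic fibration with section---is the mechanism behind \cite{H2}, and it is correct on an elliptic $K3$: the section is a $(-2)$-curve with class $[o_X]$, and $n$-torsion relative to it pushes forward to $n$-torsion in $\mathrm{CH}_0(X)$, which vanishes by Ro{\u\i}tman. But your extension to an arbitrary $K3$ does not work as written. You invoke Ran to deform the resulting curves, yet the torsion multisections are \emph{not} rational (their geometric genus grows with $n$), so Ran's theorem does not apply to them; nor is there any reason the Chow-theoretic property ``all points have the same class in $\mathrm{CH}_0(X)$'' should persist once the elliptic structure is lost under deformation of the ambient surface. The argument in \cite{H2} avoids specialization altogether: on any projective $K3$, a sufficiently ample linear system already contains a family of curves of geometric genus $\leq 1$ sweeping out $X$, and the torsion construction is carried out inside that family, with the base point on each elliptic member pinned down by intersecting with a fixed Bogomolov--Mumford rational curve.

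Your first route does yield a correct proof---infinitely many distinct irreducible curves on a surface cannot all lie in a proper closed subset---but it invokes the infinitude of rational curves on an arbitrary projective $K3$, which is substantially harder than the lemma itself and, in that generality, postdates the references the paper cites.
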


\begin{lem}[{\cite[Theorem 1.2]{Mac}}; see also {\cite[Lemma 3.5]{V1}}] \label{dense2}
For any point $\xi \in X^{[d]}$, its orbit under rational equivalence $O_\xi \subset X^{[d]}$ is Zariski-dense.
\end{lem}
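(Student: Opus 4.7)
The plan is to reduce, via the Hilbert--Chow morphism $h : X^{[d]} \to X^{(d)}$, to a Zariski-density statement for zero-cycles on the symmetric product. Since the fibers of $h$ are rationally connected (products of punctual Hilbert schemes of $\BA^2$), the push-forward induces an isomorphism $h_* : \mathrm{CH}_0(X^{[d]}) \xrightarrow{\sim} \mathrm{CH}_0(X^{(d)})$. In particular, every fiber of $h$ is contained in a single rational equivalence orbit, so $O_\xi = h^{-1}(h(O_\xi))$ and it suffices to prove Zariski-density in $X^{(d)}$ of the orbit of each $\mathbf{x} = x_1 + \cdots + x_d$.

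After this reduction, write $\mathbf{x}$ as a reduced effective cycle $x_1 + \cdots + x_d$ (the non-reduced case is handled by the same argument on the appropriate stratum of $X^{(d)}$). The strategy is to build, through $\mathbf{x}$, a $2d$-dimensional algebraic family of zero-cycles with the same class in $\mathrm{CH}_0(X^{(d)})$. The geometric input is Lemma~\ref{dense1}: for a Zariski-dense set of points $x \in X$, there passes a constant cycle curve $C$, along which the class in $\mathrm{CH}_0(X)$ is preserved. When every $x_i$ happens to lie on such a curve $C_i$, the product $C_1 \times \cdots \times C_d$ (modulo $\mathfrak{S}_d$) sweeps out a $d$-dimensional family in a single orbit. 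To reach full dimension $2d$, one varies not just the point on each curve but also the curve itself, using the fact that constant cycle curves move in algebraic families whose union is dense in $X$.

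The main obstacle is that for a \emph{given} $\xi$ one cannot assume the $x_i$ lie on constant cycle curves: these curves only pass through points whose rational equivalence orbit in $X$ is small, whereas the $x_i$ are arbitrary. The workaround, following Maclean, is to first pass from $\xi$ to another element of $O_\xi$ whose support does have this special form, and then apply the construction above. Concretely, by Theorem~\ref{mainthm} one has $c_2(\CI_\xi) \in S_d(X)$, which means there is an effective representative of $[\mathbf{x}]$ in $\mathrm{CH}_0(X)$ as a sum of $d$ points; using Beauville--Voisin relations on an auxiliary rational curve (a constant cycle curve with class $[o_X]$, whose existence is guaranteed by Lemma~\ref{dense1}) one can modify the representative so that its support points all lie on constant cycle curves, and then spread out this rewriting algebraically over $X^{(d)}$.

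With this reduction in place, Zariski-density of $h(O_\xi)$ in $X^{(d)}$ follows from density of the corresponding sweep, which in turn follows from Lemma~\ref{dense1}. Pulling back through $h$ yields Zariski-density of $O_\xi$ in $X^{[d]}$, as required. The most delicate point is the initial representative change, which requires some care to carry out algebraically (rather than cycle-by-cycle) so that the resulting family is actually a subvariety of $X^{(d)}$ rather than merely a set of closed points.
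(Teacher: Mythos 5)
The paper offers no proof of Lemma~\ref{dense2}: it is quoted from Maclean \cite{Mac} and Voisin \cite{V1}, so your sketch can only be measured against those sources. Measured that way, it has a fatal flaw at its center. You propose to build ``a $2d$-dimensional algebraic family of zero-cycles with the same class in $\mathrm{CH}_0(X^{(d)})$,'' reaching dimension $2d$ by varying both the points on the constant cycle curves $C_i$ and the curves themselves. No such family exists: by Mumford--Roitman, every irreducible subvariety of $X^{[d]}$ contained in a single rational equivalence orbit is isotropic for the holomorphic symplectic form and hence has dimension at most $d$ (and the same bound holds in $X^{(d)}$). This bound is not incidental here --- it is exactly why Voisin's filtration $S^V_\bullet$ in this paper terminates at $S^V_d$ and why the constant cycle fibers in Theorem~\ref{coiso} have dimension at most $i$. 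The orbit $O_\xi$ is dense only as a \emph{countable union} of such $\le d$-dimensional pieces. Concretely, the step ``vary the curve itself'' leaves the orbit: distinct constant cycle curves carry distinct constant classes (if they all carried $[o_X]$, the density in Lemma~\ref{dense1} would force $\mathrm{CH}_0(X)=\BZ\cdot[o_X]$, contradicting Mumford), so the union of the products $C_1\times\cdots\times C_d$ over varying curves is a dense union of \emph{different} orbits. That is precisely how Lemma~\ref{dense1} is used in the proof of Theorem~\ref{coiso}; it does not yield density of any single orbit.

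The ``workaround'' does not repair this. First, the justification offered --- $c_2(\CI_\xi)\in S_d(X)$ by Theorem~\ref{mainthm}, hence an effective degree-$d$ representative exists --- is vacuous, since $\mathrm{Supp}(\xi)$ is itself such a representative. Second, the real claim, that $[\mathrm{Supp}(\xi)]$ admits an effective representative supported on constant cycle curves, is left unproved and is essentially the whole difficulty: a very general point of $X$ has countable orbit and therefore lies on no constant cycle curve, so points cannot be moved onto such curves one at a time. Third, even granting that claim, you would only have exhibited a single $d$-dimensional component of $O_\xi$, not Zariski density. A useful sanity check: nothing in your argument uses $d\ge 2$, yet the statement fails for $d=1$ (the orbit of a very general point of $X$ is countable), so any correct proof must exploit the presence of at least two points. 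The arguments of \cite{Mac} and \cite{V1} do exactly that: roughly, one shows that for $x,y\in X$ and a Zariski-general $a\in X$ there exists $b\in X$ with $[a]+[b]=[x]+[y]$, by placing $x,y,a$ on curves in ample linear systems and using effectivity of divisor classes of degree $\ge g$ on a genus-$g$ curve together with the Beauville--Voisin class \cite{BV}; iterating this two-point exchange is what fills out a dense countable union inside one orbit. Your Hilbert--Chow reduction in the first paragraph is correct and can be kept; everything after it needs to be replaced.
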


\begin{proof}[Proof of Theorem \ref{coiso}]
Given $d - i$ constant cycle curves in $X$ labelled as $C_{i + 1}, C_{i + 2}, \ldots, C_d$, we consider the rational map
\[
\phi : X^{[i]} \times C_{i + 1} \times C_{i + 2} \times \cdots \times C_d \dashrightarrow X^{[d]}
\]
which (generically) sums up the points on the factors. By Lemma \ref{dense1}, the union of $\mathrm{Im}(\phi)$ for all choices of constant cycles curves is Zariski-dense in~$X^{[d]}$. In particular, there exists such $\phi$ whose image meets $V \subset X^{[d]}$.

Let $\phi': Z \dashrightarrow R_0$ denote the pull-back of $\phi$ via $p_{X^{[d]}}$.\footnote{If $Z$ is not irreducible, we may take an irreducible component of $Z$.} We have a diagram
\[\begin{tikzcd}
Z \arrow[dashed]{r}{\phi'} \arrow[dashed]{d}[swap]{p'} & R_0 \arrow{d}{p_{X^{[d]}}} \\
X^{[i]} \times C_{i + 1} \times C_{i + 2} \times \cdots \times C_d \arrow[dashed]{r}{\phi} & X^{[d]}.
\end{tikzcd}\]
Again by Lemma \ref{dense1}, we may assume that $\phi'(Z)$ meets $p_M^{-1}(U) \subset R_0$.

Let $q : Z \dashrightarrow X^{[i]}$ denote the composition of $p'$ and the projection to~$X^{[i]}$. For a general point $\xi \in X^{[i]}$, consider the fiber $Z_{\xi} \subset Z$. By construction, the image
\[p_M(\phi'(Z_\xi)) \subset M\]
consists of objects in $M$ with constant second Chern class. By Theorem \ref{mainconj}, this gives a constant cycle subvariety in~$M$. The dimension of $p_M(\phi'(Z_\xi))$ is $d - i$ since $p_M$ and $p_{X^{[d]}}$ are finite over $U$ and $V$.

We have shown that the image $p_M(\phi'(Z))$ is generically covered by constant cycle subvarieties of dimension $d - i$. We conclude by \cite[Theorem~0.7]{V2} that $p_M(\phi'(Z))$ is algebraically coisotropic of codimension~$d - i$ with constant cycle general fibers. This proves part (i) of the theorem.

For part (ii), let $\mathcal{E}$ be an object in $M$ such that $c_2(\mathcal{E}) \in S_i(X)$. By definition, there exists a point $\xi_0 \in X^{[i]}$ satisfying
\[
c_2(\mathcal{E}) = \mathrm{Supp}(\xi_0) + (d - i)[o_X] + c[o_X] \in \mathrm{CH}_0(X).
\]
Applying Lemma \ref{dense2} to $\xi_0 \in X^{[i]}$, we may further assume that $p_M(\phi'(Z_{\xi_0}))$ is well-defined and is of dimension $d - i$.

By construction, the subvariety $p_M(\phi'(Z_{\xi_0}))$ consists of objects in $M$ whose second Chern class equals $c_2(\mathcal{E})$. By Theorem~\ref{mainconj}, it is a subvariety of dimension $d - i$ in the orbit $O_{\mathcal{E}} \subset M$. We conclude that $[\mathcal{E}] \in S_i^V\mathrm{CH}_0(M)$, which proves part (i) of the theorem.
\end{proof}

\begin{rmk} \label{densi}
Our proof relies on the Zariski density of subvarieties of maximal dimension in an orbit of $X^{[d]}$. If one could prove such density for $M$, then an argument using \cite[Theorem 2.1]{V1} would yield the other inclusion
\[S_i^V\mathrm{CH}_0(M) \subset S_i\mathrm{CH}_0(M).\]
\end{rmk}

\section{Special cubic fourfolds}

We prove Theorem \ref{cubic4} in this section. The proof uses the work of Shen and Vial in \cite[Part 3]{SV} on the Chow motive of the Fano variety of lines in a cubic $4$-fold.


\subsection{The Shen--Vial decomposition}
Let $Y$ be a cubic $4$-fold and let $F$ be its Fano variety of lines. For our purpose, we shall restrict to the case of $0$-cycles on $F$.

Given a general line $l$ in $Y \subset \BP^5$, there exists a unique plane $\mathbb{P}_l^2 \subset \mathbb{P}^5$ which is tangent to $Y$ along $l$. We have a proper intersection in $\mathbb{P}^5$, 
\[
\mathbb{P}_l^2 \cdot Y = 2l + l',
\]
where $l'$ is another line in $Y$. In \cite{V0}, Voisin introduced the rational~map
\[\varphi : F \dashrightarrow F\]
of degree $16$ which sends $l$ to $l'$.

Consider the action of $\varphi$ on $\mathrm{CH}_0(F)$ via the push-forward
\[
\varphi_\ast: \mathrm{CH}_0(F) \rightarrow \mathrm{CH}_0(F).
\]

\begin{thm}[{\cite[Theorem 21.9]{SV}}] \label{eigen}
There is a decomposition
\[
\mathrm{CH}_0(F)_\BQ=  \mathrm{CH}_0(F)_{(0)} \oplus \mathrm{CH}_0(F)_{(2)} \oplus \mathrm{CH}_0(F)_{(4)}
\]
into eigenspaces of $\phi_*$ with eigenvalues $1$, $-2$, and $4$ respectively.
\end{thm}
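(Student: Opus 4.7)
The plan is to deduce this eigenspace decomposition from the multiplicative Chow--K\"unneth decomposition of $\mathrm{CH}^*(F)_\BQ$ constructed earlier in \cite{SV}, which restricts to the claimed three-piece splitting on $\mathrm{CH}_0(F)_\BQ$. The first task is to verify that the graph correspondence $[\Gamma_\varphi] \in \mathrm{CH}^4(F \times F)_\BQ$ commutes with the Chow--K\"unneth projectors of $F$, so that $\varphi_*$ preserves each weight subspace $\mathrm{CH}_0(F)_{(2i)}$. Both the projectors and $[\Gamma_\varphi]$ can be written in terms of natural universal classes on $F$ --- Chern classes of the restriction of the tautological rank-$2$ bundle on $\mathrm{Gr}(2,6)$, the incidence divisor $I \subset F \times F$ parametrizing pairs of intersecting lines, and tangent bundle data of $Y$ --- so the compatibility should reduce to a polynomial identity in these classes, checkable by a direct correspondence-calculus computation on $F \times F$.

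Once $\varphi_*$ is known to preserve each piece, I would show it acts as a scalar on each and identify that scalar. On $\mathrm{CH}_0(F)_{(0)}$ the piece is one-dimensional, spanned by the Beauville--Voisin class $[o_F]$; since $\varphi_*$ preserves degree of $0$-cycles, the eigenvalue is forced to be $1$. For the remaining pieces, the strategy is to exhibit representative classes whose images under $\varphi$ can be computed geometrically. A natural family of test classes are differences $[l] - [o_F]$ for lines $l$, decomposed via the Chow--K\"unneth projectors onto the weight pieces. Voisin's formulas from \cite{V0}, which describe the action of $\varphi^*$ on $H^2(F, \BQ)$, yield the eigenvalue of $\varphi^*$ on the transcendental part; combined with $\varphi_* \circ \varphi^* = 16 \cdot \mathrm{id}$ on $\mathrm{CH}_0$, this forces the eigenvalue $-2$ on $\mathrm{CH}_0(F)_{(2)}$. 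The eigenvalue $4$ on $\mathrm{CH}_0(F)_{(4)}$ then follows from the multiplicativity of the Chow--K\"unneth decomposition, which expresses elements of $\mathrm{CH}_0(F)_{(4)}$ (modulo lower pieces) as products of classes in the $(2)$-piece.

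The principal obstacle is the compatibility of $\varphi$ with the Chow--K\"unneth projectors. Because $\varphi$ is only a rational map of degree $16$, one has to pass to a resolution of its indeterminacy locus and verify that the closure of the graph carries the required commutation property. A secondary challenge is exhibiting an explicit test class in $\mathrm{CH}_0(F)_{(4)}$ whose image under $\varphi_*$ can be computed directly; natural candidates come from points on constant cycle curves contained in the uniruled divisor of lines meeting a fixed line, where the residual-line construction defining $\varphi$ can be analyzed by hand. The combination of these geometric computations with the multiplicative structure of the decomposition should pin down all three eigenvalues and complete the proof.
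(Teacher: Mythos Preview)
The paper does not prove this theorem at all: it is stated with the attribution ``\cite[Theorem 21.9]{SV}'' and quoted as an input from Shen--Vial's memoir, then used as a black box in the proofs of Theorem~\ref{thm7} and Theorem~\ref{thmcubic}. There is therefore no argument in the paper to compare your proposal against.

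As a side remark, your outline is broadly in the spirit of how \cite{SV} actually establish the result, but be aware that the decomposition there is \emph{defined} via the eigenspace structure of certain operators (in particular the Fourier transform built from the incidence correspondence and a polarization), not obtained first and then checked to be $\varphi_*$-stable. The eigenvalue computation for $\varphi_*$ in \cite{SV} proceeds through explicit relations among $[\Gamma_\varphi]$, the incidence class $[I]$, and divisor classes on $F\times F$, rather than by testing on individual cycles as you suggest. Your plan to deduce the eigenvalue $4$ from multiplicativity and the eigenvalue $-2$ is plausible heuristics, but making it rigorous requires knowing that $\mathrm{CH}_0(F)_{(4)}$ is generated by products of weight-$2$ classes, which is itself a nontrivial part of the Shen--Vial package.
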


Define the incidence variety $I \subset F \times F$ to be
\[
I=\{ (l_1, l_2) \in F \times F: l_1 \cap l_2 \neq \emptyset \subset Y\}.
\]
The correspondence
\[[I]_\ast : \mathrm{CH}_0(F) \rightarrow \mathrm{CH}^2(F)\] sends the class of a line $[l] \in \mathrm{CH}_0(F)$ to $[S_l] \in \mathrm{CH}_2(F)$, where $S_l$ is the surface formed by lines meeting $l$. 


The following proposition gives another description of $\mathrm{CH}_0(F)_{(4)}$ via $[I]_\ast$.

\begin{prop}[{\cite[Definition 20.1 and Theorem 21.9]{SV}}] \label{CH4}
We have
\[
\mathrm{CH}_0(F)_{(4)} = \mathrm{Ker}( [I]_\ast )_\BQ.
\]
\end{prop}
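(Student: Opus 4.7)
The plan is to combine the eigendecomposition from Theorem \ref{eigen} with a correspondence-level identity on $F \times F$ relating the incidence cycle $I$ to the graph $\Gamma_\varphi$ and the diagonal $\Delta_F$.

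For the inclusion $\mathrm{CH}_0(F)_{(4)} \subset \mathrm{Ker}([I]_\ast)_\BQ$, the key geometric observation is that for a general line $l$ the residual line $\varphi(l)$ is itself a line meeting $l$, so $\Gamma_\varphi \subset I$ as a subcycle, and $l$ itself contributes along $\Delta_F \cap I$ with a specific multiplicity determined by the tangent plane $\mathbb{P}^2_l$. I would look for an identity in $\mathrm{CH}^2(F \times F)_\BQ$ of the shape
\[ [I] \equiv a \, \Gamma_\varphi + b \, \Delta_F \pmod{\mathrm{CH}^{\ast}(F) \otimes \Pic(F) \;+\; \Pic(F)\otimes \mathrm{CH}^{\ast}(F) \;+\; R^\ast(F\times F)}\]
with rational constants $a,b$ determined by Voisin's degree-$16$ construction of $\varphi$ (so that the incidence of $l$ with $\varphi(l)$ is accounted for once, of $l$ with itself with the correct tangent-plane multiplicity, and of $l$ with generic residual lines through the divisorial correction). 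Pushing forward along $\pi_2$ then gives, for a $\varphi_\ast$-eigenvector $[l]$ with eigenvalue $4$, an expression of $[S_l] \in \mathrm{CH}_2(F)_\BQ$ as a combination of $a(\varphi_\ast[l]) + b[l]$ and a class coming from the Beauville--Voisin subring; matching constants against a trivial test case (such as $[l]$ on a constant-cycle surface) would force this combination to vanish.

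For the reverse inclusion $\mathrm{Ker}([I]_\ast)_\BQ \subset \mathrm{CH}_0(F)_{(4)}$, I would compose $[I]_\ast$ with the cap product by $g^2 \in \mathrm{CH}^2(F)$, where $g$ is the Pl\"ucker polarization, to get an endomorphism $g^2 \cdot [I]_\ast$ of $\mathrm{CH}_0(F)_\BQ$. The same correspondence identity shows this endomorphism is a polynomial in $\varphi_\ast$ modulo $\BQ \cdot [o_F]$; by Theorem \ref{eigen} it then has three eigenvalues, two of which (on the $(0)$- and $(2)$-pieces) can be shown nonzero from the coefficients $a,b$ already pinned down. Consequently, if $[I]_\ast(\alpha)=0$ and we write $\alpha = \alpha_0 + \alpha_2 + \alpha_4$ along Theorem \ref{eigen}, then vanishing of $g^2 \cdot [I]_\ast(\alpha)$ forces $\alpha_0 = \alpha_2 = 0$.

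The main obstacle will be fixing the correspondence identity with the correct numerical constants: this is an explicit Schubert-calculus calculation on $\mathbb{G}(1,5)$ combined with an analysis of the multiplicity with which $\Delta_F$ appears inside $I$ (essentially replaying Voisin's residual-line construction at the level of cycles), and any slippage in the multiplicities breaks the eigenvalue matching. A subsidiary difficulty is the hard-Lefschetz-type step needed to conclude $[S_l] = 0$ from $g^2 \cdot [S_l] = 0$ in the first direction: this asks that cap product with $g^2$ be injective on the image of $[I]_\ast$ restricted to $\mathrm{CH}_0(F)_{(4)}$, which should follow from the motivic decomposition of $\mathfrak{h}(F)$ in \cite{SV} but must be checked explicitly on the relevant piece.
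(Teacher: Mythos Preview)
The paper does not prove this proposition at all: it is stated with the attribution ``{\cite[Definition 20.1 and Theorem 21.9]{SV}}'' and is used as a black box imported from Shen--Vial. There is therefore no in-paper argument to compare your proposal against.

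That said, your sketch is in the right spirit for how the result is actually established in \cite{SV}. The correspondence identity you are guessing at does exist there: the incidence class $[I]$ is expressed, up to classes supported on the ``decomposable'' part of $\mathrm{CH}^*(F\times F)$, in terms of $\Gamma_\varphi$, $\Delta_F$, and polarization classes, and this is exactly what drives the eigenspace description. Your plan to recover the constants by Schubert calculus and tangent-plane multiplicities is also the right shape. The one place where your outline is looser than necessary is the reverse inclusion: rather than a hard-Lefschetz-type argument with $g^2\cdot[I]_\ast$, the Shen--Vial approach builds the projectors $\pi^{2i}_F$ directly as explicit polynomials in the correspondences $[I]$, $\Gamma_\varphi$, $\Delta_F$, and divisorial terms, so the identification of $\mathrm{CH}_0(F)_{(4)}$ with $\mathrm{Ker}([I]_\ast)_\BQ$ falls out of the very definition of the $(4)$-piece (this is why the citation points to ``Definition 20.1'' as well as the theorem). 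Your route would work but reproves something that in \cite{SV} is essentially definitional once the correspondence identity is in hand.
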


By \cite{SV, V}, the group $\mathrm{CH}_0(F)_{(0)}$ is spanned by a canonical $0$-cycle class of degree $1$,
\[
\mathrm{CH}_0(F)_{(0)} = \BQ \cdot [o_F].
\]
where $o_F$ can be taken any point lying on a rational surface in $F$. Moreover, all $0$-dimensional intersections of divisor classes and Chern classes of $F$ are multiples of $[o_F]$. 

The next lemma concerns the self-intersection of the surface $S_l$.

\begin{lem}\label{SVrelation}
For a line $l \subset Y$, we have
\[
[S_l]^2 = 6[o_F]+ \varphi_\ast [l] - 2[l] \in \mathrm{CH}_0(F).
\]
\end{lem}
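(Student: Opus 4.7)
The plan is to compute the 0-cycle $[S_l]^2 \in \mathrm{CH}_0(F)$ by interpreting it as the limit of $[S_l \cap S_{l'}]$ as $l'$ specializes to $l$, and then isolate the three geometric contributions that appear on the right-hand side.

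First, I would set up the computation via the universal line. Let $p \colon P \to F$ be the universal line over $F$ and $q \colon P \to Y$ the evaluation map, so that $S_l = p(q^{-1}(l))$ scheme-theoretically. For a general line $l' \subset Y$ distinct from $l$, the intersection $S_l \cap S_{l'}$ is a 0-cycle on $F$ of degree $[S_l]^2$, consisting of lines in $Y$ meeting both $l$ and $l'$. Moving $l'$ in a smooth family to $l$ in $F$ (e.g.\ along a general curve through $[l]$) and applying conservation of number, the class $[S_l]^2$ equals the limit of $[S_l \cdot S_{l'}]$.

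Second, I would identify the geometric limit. As $l' \to l$, the finitely many lines meeting both $l$ and $l'$ split into:
\begin{enumerate}
\item[(a)] \emph{Infinitesimal tangency contributions:} lines $l''$ that become tangent to $Y$ along $l$ in the limit. Because the tangent plane $\mathbb{P}_l^2$ cuts $Y$ as $2l + \varphi(l)$, the scheme of such degenerations picks up the residual line $\varphi(l) \in F$, while the multiplicity $2$ of $l$ in this scheme-theoretic intersection accounts for a correction of $-2[l]$. This produces the term $\varphi_\ast[l] - 2[l]$.
\item[(b)] \emph{Generic (constant cycle) contribution:} the remaining portion of the limit is supported on lines that deform in a rational family inside $F$, and hence lives in $\mathrm{CH}_0(F)_{(0)} = \mathbb{Q} \cdot [o_F]$ by the Shen--Vial results recalled above. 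Call this contribution $c \, [o_F]$.
\end{enumerate}
Thus $[S_l]^2 = c \, [o_F] + \varphi_\ast[l] - 2[l]$ in $\mathrm{CH}_0(F)$ for some universal constant $c$.

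Third, I would pin down $c = 6$ by a degree (or Chern class) computation. Taking degrees on both sides, $c = \deg[S_l]^2 - 1 + 2 = \deg[S_l]^2 + 1$. The degree $\deg[S_l]^2$ is the cohomological self-intersection of the incidence surface on $F$, which is computed in Beauville--Donagi and equals $5$ (e.g.\ by expressing $[S_l]$ in $H^4(F,\mathbb{Q})$ via the Pl\"ucker class $g$ and $c_2(F)$, and using the known intersection numbers $g^4 = 108$, $g^2 \cdot c_2(F) = 240$, etc.); this gives $c = 6$.

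The main obstacle is step (a): making the tangent-plane contribution rigorous. The surface $S_l$ is singular at the point $[l]$ itself (every point of $l$ serves as an intersection point, so $S_l$ has a non-reduced structure along a ruling through $[l]$), and extracting the multiplicity $2$ of $l$ in $\mathbb{P}_l^2 \cdot Y$ as the numerical coefficient $-2$ requires a careful excess-intersection analysis using the normal bundle of $l \subset Y$ and the differential of Voisin's map $\varphi$ at $[l]$. An alternative, which avoids the singular analysis altogether, is to verify the identity component-by-component in the Shen--Vial decomposition (Theorem~\ref{eigen}): the $(0)$-piece reduces to the degree computation above, the $(2)$-piece is forced by the eigenvalue $-2$ of $\varphi_\ast$ on $\mathrm{CH}_0(F)_{(2)}$, and the $(4)$-piece vanishes by applying $[I]_\ast$ to both sides and using Proposition~\ref{CH4}.
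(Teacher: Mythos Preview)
The paper does not prove this identity from scratch: it simply observes that for $l$ in the locus where $\varphi$ is defined the relation is a special case of \cite[Proposition~20.7~(i)]{SV}, and then passes to an arbitrary $l$ by a spreading-out argument. So the ``proof'' is essentially a citation.

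Your proposal is far more ambitious---you are trying to rederive a result that in \cite{SV} sits on top of a substantial amount of machinery. The specialization picture you sketch is the right intuition, but as you yourself flag, step~(a) is not a proof: extracting $\varphi_\ast[l]-2[l]$ as the precise excess contribution at the singular point $[l]\in S_l$ is exactly the hard part, and nothing in your outline pins down the coefficient $-2$ (as opposed to any other integer) without already knowing the answer.

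Your fallback via the eigenspace decomposition also has a gap. Applying $[I]_\ast$ to both sides and invoking Proposition~\ref{CH4} shows at best that the two sides agree \emph{modulo} $\mathrm{CH}_0(F)_{(4)}$, since $\mathrm{CH}_0(F)_{(4)}=\mathrm{Ker}([I]_\ast)_\BQ$; it says nothing about the $(4)$-components themselves. Likewise, saying the $(2)$-piece is ``forced by the eigenvalue $-2$'' presupposes you already know the $(2)$-component of $[S_l]^2$, which is part of what is to be shown. Finally, the decomposition in Theorem~\ref{eigen} is only over~$\BQ$, whereas the lemma is asserted in $\mathrm{CH}_0(F)$ with integral coefficients, so even a successful version of this argument would leave a torsion ambiguity.

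In short: the paper's route is to quote \cite{SV}; if you want an independent proof, the excess-intersection analysis in step~(a) is unavoidable and must be carried out in full.
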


\begin{proof}
If the point $l \in F$ lies in the open subset where the rational map $\varphi$ is well-defined, then $\varphi_\ast [l] = [\varphi(l)]$, and the statement is a special case of \cite[Proposition 20.7 (i)]{SV}. The same result for an arbitrary $l \in F$ follows from a standard ``spreading out'' argument.
\end{proof}



Now we prove the main result of this subsection. For a line $l \subset Y$, the decomposition in Theorem \ref{eigen} yields
\begin{equation}\label{decompeqn}
[l] = [l]_{(0)}+ [l]_{(2)}+ [l]_{(4)} \in \mathrm{CH}_0(F)_\BQ,
\end{equation}
where $[l]_{(0)}=[o_F]$.

\begin{thm}\label{thm7}
Two lines $l$ and $l'$ in $Y$ satisfy
\[
[l] = [l'] \in \mathrm{CH}_0(F)
\]
if and only if
\[[l]_{(2)} = [l']_{(2)} \in \mathrm{CH}_0(F)_{(2)}.\]
\end{thm}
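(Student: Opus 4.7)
The ``only if'' direction is immediate from the uniqueness of the decomposition in Theorem \ref{eigen}. For the ``if'' direction, since $[l]_{(0)} = [o_F] = [l']_{(0)}$ is universal and $[l]_{(2)} = [l']_{(2)}$ by hypothesis, the task reduces to showing $[l]_{(4)} = [l']_{(4)}$ in $\mathrm{CH}_0(F)_{(4)}$, after which $[l] = [l']$ in $\mathrm{CH}_0(F)_\BQ$.

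The plan is to extract the $(4)$-component using the geometric identity of Lemma \ref{SVrelation}. By Proposition \ref{CH4}, the correspondence $[I]_\ast$ annihilates $\mathrm{CH}_0(F)_{(4)}$, so $[S_l] = [I]_\ast [l]$ depends only on $[l]_{(0)} + [l]_{(2)}$. Hence under our hypothesis, $[S_l] = [S_{l'}] \in \mathrm{CH}_2(F)_\BQ$, and consequently $[S_l]^2 = [S_{l'}]^2 \in \mathrm{CH}_0(F)_\BQ$. Applying Lemma \ref{SVrelation} on both sides yields
\[
\varphi_\ast[l] - 2[l] = \varphi_\ast[l'] - 2[l'] \in \mathrm{CH}_0(F)_\BQ.
\]
Now decompose both sides via Theorem \ref{eigen}. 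On the piece $\mathrm{CH}_0(F)_{(4)}$ the operator $\varphi_\ast$ acts as multiplication by $4$, so $\varphi_\ast - 2\cdot \mathrm{id}$ acts as multiplication by $2$. Projecting the displayed equation to $\mathrm{CH}_0(F)_{(4)}$ gives $2[l]_{(4)} = 2[l']_{(4)}$, and hence $[l]_{(4)} = [l']_{(4)}$ since $\mathrm{CH}_0(F)_{(4)}$ is a $\BQ$-vector space. Combining this with $[l]_{(0)} = [l']_{(0)}$ and $[l]_{(2)} = [l']_{(2)}$ proves $[l] = [l'] \in \mathrm{CH}_0(F)_\BQ$.

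Finally, to upgrade from rational to integral coefficients, note that $F$ is a holomorphic symplectic variety of $K3^{[2]}$-type, hence simply connected, so its Albanese variety vanishes. By Ro\u\i tman's theorem, $\mathrm{CH}_0(F)$ is torsion-free, and therefore $[l] = [l'] \in \mathrm{CH}_0(F)_\BQ$ already implies $[l] = [l'] \in \mathrm{CH}_0(F)$.

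The core of the argument is the observation that Lemma \ref{SVrelation} together with Proposition \ref{CH4} provides exactly the algebraic relation needed to recover the $(4)$-component from the $(0)$- and $(2)$-components of $[l]$; the main obstacle (or conceptual subtlety) is recognizing that the apparently quadratic input $[S_l]^2$ becomes linear in $[l]_{(4)}$ after projection, because the constant and $(2)$-contributions cancel out between $[l]$ and $[l']$.
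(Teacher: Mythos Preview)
Your proof is correct and follows essentially the same route as the paper: both use Proposition \ref{CH4} to deduce $[S_l]=[S_{l'}]$, then square and apply Lemma \ref{SVrelation} to obtain a relation which, combined with the eigenvalue action of $\varphi_\ast$ on $\mathrm{CH}_0(F)_{(4)}$, forces $[l]_{(4)}=[l']_{(4)}$; Ro\u\i tman's theorem finishes as you indicate. The only difference is packaging---the paper phrases the endgame as computing $\varphi_\ast([l]-[l'])$ in two ways (getting $4([l]-[l'])$ and $2([l]-[l'])$), whereas you project $(\varphi_\ast-2\,\mathrm{id})$ to the $(4)$-piece---but the algebra is identical.
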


\begin{proof}
We only need to prove the ``if'' part. The decomposition (\ref{decompeqn}) for $[l]$ and $[l']$ gives
\[
[l]- [l'] = ([l]_{(2)}-[l']_{(2)})+ ([l]_{(4)}-[l']_{(4)}) \in \mathrm{CH}_0(F)_\BQ.
\]
If $[l]_{(2)}= [l']_{(2)} \in \mathrm{CH}_0(F)_{(2)}$, then we have 
\begin{equation} \label{cubiceqn1}
[l] - [l'] \in \mathrm{CH}_0(F)_{(4)}.
\end{equation}
Applying Theorem \ref{eigen}, we find
\begin{equation}\label{relation1}
\varphi_\ast\left( [l] - [l']\right) = 4\left([l] - [l']\right) \in \mathrm{CH}_0(F)_\BQ.
\end{equation}

On the other hand, by Proposition \ref{CH4}, the condition \eqref{cubiceqn1} implies that 
\[
[I]_\ast\left([l] - [l'] \right) = [S_l]- [S_{l'}] =0 \in  \mathrm{CH}^2(F)_\BQ.
\] 
Hence $[S_l]^2 = [S_{l'}]^2 \in \mathrm{CH}_0(F)_\BQ$, and by Lemma \ref{SVrelation} we have
\begin{equation}\label{relation1'}
\varphi_\ast \left( [l] - [l'] \right) = 2\left( [l] - [l']\right) \in \mathrm{CH}_0(F)_\BQ.
\end{equation}
Combining (\ref{relation1}) and (\ref{relation1'}), we find $[l]=[l'] \in \mathrm{CH}_0(F)_\BQ$, and we conclude by Ro{\u\i}tman's theorem \cite{Roi} that $[l]=[l'] \in \mathrm{CH}_0(F)$.
\end{proof}

\subsection{Cubic fourfolds containing a plane}
Let $Y$ be a generic cubic $4$-fold which contains a plane $\mathbb{P}^2 \subset \mathbb{P}^5$. We first review the construction of the $K3$ surface associated to $Y$ in \cite[Section 4]{Kuz}.

Let $\widetilde{Y} \subset \mathrm{Bl}_{\mathbb{P}^2}(\BP^5)$ denote the $4$-fold obtained by blowing up the plane~$\mathbb{P}^2$ in $Y$. The rational map $Y \dashrightarrow \mathbb{P}^2$ given by the linear projection from $\mathbb{P}^2 \subset Y$ induces a morphism
\[f: \widetilde{Y} \rightarrow \BP^2,\]
which is a quadric fibration. We call $l \subset \widetilde{Y}$ a line if it is either a line in the exceptional divisor over $Y$ or the strict transform of a line in $Y$. 

We define $\CM$ to be the moduli space of lines in $\widetilde{Y}$ contained in the fibers of $f$. Since $f$ is a quadric fibration, the moduli space $\CM$ admits a natural structure of a $\mathbb{P}^1$-bundle over a $K3$ surface $X$ which is a double cover of $\mathbb{P}^2$ ramified along a plane sextic $C$. Note that the genericity of $Y$ ensures that both $C$ and $X$ are nonsingular.

The construction of $\CM$ provides a rational map
\[
r: \CM \dashrightarrow F
\]
by projecting a line in $\widetilde{Y}$ to $Y$ via the projection $\widetilde{Y} \rightarrow Y$. Let
\[q: \CM \to X\]
be the $\mathbb{P}^1$-bundle. Hence there is a diagram
\[\begin{tikzcd}
\CM \arrow{d}{q} \arrow[dashed]{r}{r} & F. \\
X
\end{tikzcd}\]
By taking (the closure of) the image of $r$, we obtain a uniruled divisor in $F$,
\[\begin{tikzcd}
D \arrow[dashed]{d}{} \arrow[hook]{r}{} & F. \\
X
\end{tikzcd}\]
The map $r$ induces a morphism of Chow groups
\[
r_\ast : \mathrm{CH}_0(\CM) \rightarrow \mathrm{CH}_0(F)
\]
whose image is given by the classes of $0$-cycles supported on $D$.

\begin{prop}\label{image1}
We have
\[
\mathrm{Im}(r_\ast)_\BQ =
\mathrm{CH}_0(F)_{(0)} \oplus \mathrm{CH}_0(F)_{(2)}.
\]
\end{prop}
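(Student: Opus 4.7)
The approach is to analyze the map induced by the correspondence $(q,r) : \CM \to X \times F$. Since $q : \CM \to X$ is a $\BP^1$-bundle, the pushforward $q_\ast : \mathrm{CH}_0(\CM)_\BQ \xrightarrow{\sim} \mathrm{CH}_0(X)_\BQ$ is an isomorphism, so $\mathrm{Im}(r_\ast)_\BQ$ coincides with the image of $R_\ast := r_\ast \circ q_\ast^{-1} : \mathrm{CH}_0(X)_\BQ \to \mathrm{CH}_0(F)_\BQ$. I will match this image against the Shen--Vial eigenspace decomposition of Theorem \ref{eigen}.

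For the inclusion $\subseteq$, I would show that $[l]_{(4)} = 0$ for every line $l = r(\tilde l)$ with $\tilde l \in \CM$. By Theorem \ref{eigen}, the subspace $\mathrm{CH}_0(F)_{(0)} \oplus \mathrm{CH}_0(F)_{(2)}$ is the kernel of $(\varphi_\ast - 1)(\varphi_\ast + 2)$, so it suffices to verify
\[
(\varphi_\ast - 1)(\varphi_\ast + 2)[l] = 0 \in \mathrm{CH}_0(F)_\BQ.
\]
I would substitute $\varphi_\ast [l] = [S_l]^2 - 6[o_F] + 2[l]$ via Lemma \ref{SVrelation}, reducing the identity to a geometric computation of $[S_l]$ for a line $l$ lying in a quadric fiber $\mathbb{Q}_x$ of $f$. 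Here $S_l$ naturally contains the $\BP^1$ of lines in the opposite ruling of $\mathbb{Q}_x$ (which is contained in $D$ and parametrized by the other preimage of $\pi(x) \in \BP^2$), together with a residual surface. Combined with a second application of Lemma \ref{SVrelation} to compute the iterated $\varphi_\ast$-action, this decomposition should yield the required polynomial identity.

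For the inclusion $\supseteq$, first note that $[o_F] \in \mathrm{Im}(r_\ast)_\BQ$: pick a rational curve $C \subset X$ (which exists since $X$ is a K3 surface); then $q^{-1}(C) \subset \CM$ is a rational surface, and its image under $r$ is a rational surface in $F$, so every point on it represents the canonical class $[o_F]$. For the remaining $(2)$-part, I would invoke the Shen--Vial motivic framework together with Kuznetsov's identification of $X$ as the canonical K3 surface attached to $Y$: the correspondence $[Z] \in \mathrm{CH}^\ast(X \times F)$ induced by $(q,r)$ realizes, up to a nonzero rational scalar, the identification between the transcendental part of $\mathrm{CH}_0(X)_\BQ$ and $\mathrm{CH}_0(F)_{(2)}$, giving surjectivity of $R_\ast$ onto $(0) \oplus (2)$.

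The main obstacle is the geometric calculation in the first step: controlling $[S_l]$ and $[S_l]^2$ for a line $l$ contained in a quadric fiber $\mathbb{Q}_x$, where $S_l$ may acquire extra components or degenerate behavior compared with the generic case. This will require a careful use of the incidence geometry on $Y$ and of the two rulings of $\mathbb{Q}_x$, in tandem with Lemma \ref{SVrelation}.
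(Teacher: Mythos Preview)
Your approach diverges substantially from the paper's, and in its current form it has real gaps rather than merely a different route.

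The paper's proof is short and indirect: it invokes the Charles--Pacienza theorem that the subgroup of $0$-cycles supported on a uniruled divisor in a holomorphic symplectic variety of $K3^{[d]}$-type is independent of the choice of uniruled divisor. This reduces the statement for $D$ to the analogous statement for the uniruled divisor arising from the indeterminacy locus of $\varphi$, where it is already established by Shen--Vial. No direct computation with $[S_l]$ or with the quadric fibration is needed.

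Your proposal, by contrast, attempts a direct verification, and both halves are problematic. For the inclusion $\subseteq$, you reduce to the identity $(\varphi_\ast - 1)(\varphi_\ast + 2)[l] = 0$ and then to computing $[S_l]$ and $[S_l]^2$ for $l$ in a quadric fiber; you yourself flag this as the ``main obstacle,'' and indeed you give no indication of how the residual surface in $S_l$ would be controlled or why the resulting expression simplifies. This is not a detail but the entire content of the argument. For the inclusion $\supseteq$, the claim that the correspondence $[Z]$ ``realizes, up to a nonzero rational scalar, the identification between the transcendental part of $\mathrm{CH}_0(X)_\BQ$ and $\mathrm{CH}_0(F)_{(2)}$'' is exactly what Theorem~\ref{thmcubic} establishes \emph{after} and \emph{using} Proposition~\ref{image1}; invoking it here is circular. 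Neither Kuznetsov's derived-category description nor the Shen--Vial framework gives you this surjectivity for free at the level of Chow groups.

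If you want to salvage a direct argument, the $\subseteq$ direction is more naturally handled by observing that any class in $\mathrm{Im}(r_\ast)$ is supported on the uniruled divisor $D$ and hence has $\dim O_x \geq 1$; combining this with Voisin's identification of $S^V_\bullet$ with the Shen--Vial filtration on $F$ kills the $(4)$-part without touching $[S_l]$. But the $\supseteq$ direction still needs an independent input, and the cleanest one available is precisely the Charles--Pacienza invariance that the paper uses.
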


\begin{proof}
It is proven in \cite[Theorem 1.2]{CP} that the Chow subgroup of $0$-cycles supported on a uniruled divisor in a holomorphic symplectic variety of $K3^{[d]}$-type is independent of the choice of the uniruled divisor. Hence it suffices to verify the statement for a particular uniruled divisor in~$F$.

In fact, the exceptional divisor of a desingularization of the rational map $\varphi: F \dashrightarrow F$ maps to a uniruled divisor in $F$; see \cite[Proposition 4.4 (b)]{V2}. This uniruled divisor satisfies the proposition by \cite[Proposition 19.5]{SV}; see also \cite[(44)]{V2}.
\end{proof}

Since $q$ is a $\mathbb{P}^1$-bundle, there is an isomorphism of Chow groups 
\begin{equation}\label{Choweqn}
q_* : \mathrm{CH}_0(\CM) \xrightarrow{\sim} \mathrm{CH}_0(X).
\end{equation}
Recall the Beauville--Voisin decomposition
\begin{equation*}
\mathrm{CH}_0(X)_\BQ = \mathrm{CH}_0(X)_{(0)} \oplus \mathrm{CH}_0(X)_{(2)}
\end{equation*}
with $\mathrm{CH}_0(X)_{(0)} = \mathbb{Q} \cdot [o_X]$. The following result provides a link between $0$-cycles on the Fano variety of lines $F$ and the associated $K3$ surface $X$.

\begin{thm}\label{thmcubic}
Let $Y$ be a generic cubic $4$-fold containing a plane, and let $F$ and $X$ be as before. There is a canonical isomorphism
\begin{equation}\label{Chowiso}
\mathrm{CH}_0(F)_\BQ \simeq \mathrm{CH}_0(X)_\BQ \oplus \mathrm{CH}_0(F)_{(4)},
\end{equation}
with
\[
\mathrm{CH}_0(X)_{(0)} \simeq \mathrm{CH}_0(F)_{(0)} \ \ \textrm{and} \ \ \mathrm{CH}_0(X)_{(2)} \simeq \mathrm{CH}_0(F)_{(2)}.
\]
\end{thm}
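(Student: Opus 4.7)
The plan is to realize the isomorphism via the natural correspondences $q : \CM \to X$ and $r : \CM \dashrightarrow F$. Define
\[
\Psi := r_\ast \circ q_\ast^{-1} : \mathrm{CH}_0(X)_\BQ \to \mathrm{CH}_0(F)_\BQ,
\]
which is well-defined since $q$ is a $\BP^1$-bundle (so $q_\ast$ is an isomorphism on $\mathrm{CH}_0$) and any two points of a $q$-fiber are rationally equivalent in $\CM$. By Proposition \ref{image1}, the image of $\Psi$ equals $\mathrm{CH}_0(F)_{(0)} \oplus \mathrm{CH}_0(F)_{(2)}$. Combined with the Shen--Vial decomposition (Theorem \ref{eigen}), the theorem reduces to showing that $\Psi$ is injective and respects the canonical weight summands.

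For compatibility with the gradings, I would choose $o_X$ on a rational curve $C \subset X$. Then $q^{-1}(C) \subset \CM$ is a $\BP^1$-bundle over $\BP^1$, hence rational, and its image $r(q^{-1}(C)) \subset F$ is a rational surface in the Fano variety. By the Beauville--Voisin theorem for $F$, any point of this surface has class $[o_F]$, giving $\Psi([o_X]) = [o_F]$ and identifying the $(0)$-summands. For any $x \in X$, the degree-one class $\Psi([x])$ lies in $\mathrm{CH}_0(F)_{(0)} \oplus \mathrm{CH}_0(F)_{(2)}$ by Proposition \ref{image1}, and its $(0)$-component must equal $[o_F]$ by the degree condition. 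Hence $\Psi([x] - [o_X]) \in \mathrm{CH}_0(F)_{(2)}$, so $\Psi$ sends $\mathrm{CH}_0(X)_{(2)}$ into $\mathrm{CH}_0(F)_{(2)}$.

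The main obstacle is to prove that $\Psi$ is injective on $\mathrm{CH}_0(X)_{(2)}$, and here I would construct a left inverse using the structure of the quadric fibration $f : \widetilde{Y} \to \BP^2$. A general line $l$ in the uniruled divisor $D \subset F$ sits in a unique quadric fiber $f^{-1}(p)$ and belongs to one of its two rulings, and these two rulings correspond precisely to the two preimages of $p$ under the double cover $\pi : X \to \BP^2$. This defines a rational map $s : D \dashrightarrow X$ of generic degree one satisfying $s \circ r = q$ on a dense open subset of $\CM$. Passing to a smooth common resolution of the graph of $(r, s) : \CM \dashrightarrow F \times X$ and applying the projection formula, one obtains the identity $s_\ast \circ r_\ast = q_\ast$ of operators on $\mathrm{CH}_0(\CM)_\BQ$. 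Since $r : \CM \dashrightarrow D$ is birational, $r_\ast : \mathrm{CH}_0(\CM)_\BQ \xrightarrow{\sim} \mathrm{CH}_0(D)_\BQ$ is an isomorphism, and consequently $s_\ast : \mathrm{CH}_0(D)_\BQ \xrightarrow{\sim} \mathrm{CH}_0(X)_\BQ$ is also an isomorphism. To conclude injectivity of $\Psi$, one shows that $s_\ast$ annihilates $\mathrm{Ker}(\iota_\ast)$ where $\iota : D \hookrightarrow F$; then $s_\ast$ descends to a well-defined left inverse of $\Psi$ on its image $\mathrm{CH}_0(F)_{(0)} \oplus \mathrm{CH}_0(F)_{(2)}$.

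Assembling the pieces, $\Psi$ gives a canonical isomorphism $\mathrm{CH}_0(X)_\BQ \simeq \mathrm{CH}_0(F)_{(0)} \oplus \mathrm{CH}_0(F)_{(2)}$ respecting the weight decompositions on both sides, and together with the Shen--Vial summand $\mathrm{CH}_0(F)_{(4)}$ this yields the desired isomorphism of the theorem. The subtlest step is verifying that $s_\ast$ kills $\mathrm{Ker}(\iota_\ast)$: I expect this to be handled by analyzing how rational equivalences in $F$ between points of $D$ can be represented via the ruled geometry of the quadric fibration, so that they propagate through $s$ to rational equivalences on $X$.
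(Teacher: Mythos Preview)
Your setup and the compatibility of gradings are fine and match the paper: defining $\Psi = r_\ast \circ q_\ast^{-1}$, invoking Proposition~\ref{image1} for the image, and using a rational curve in $X$ to show $\Psi([o_X]) = [o_F]$ is exactly what the paper does. The problem is the injectivity argument, which as written is circular.

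You observe that $s_\ast : \mathrm{CH}_0(D)_\BQ \to \mathrm{CH}_0(X)_\BQ$ is an \emph{isomorphism} (being $q_\ast \circ r_\ast^{-1}$ with both factors isomorphisms). But then ``$s_\ast$ annihilates $\mathrm{Ker}(\iota_\ast)$'' is literally the statement $\mathrm{Ker}(\iota_\ast)=0$, i.e.\ injectivity of $\iota_\ast$, which is equivalent to injectivity of $\Psi$. So your ``subtlest step'' is the very thing you are trying to prove, and the suggestion to ``analyze how rational equivalences in $F$ between points of $D$ propagate through $s$'' gives no mechanism: a rational equivalence in $F$ is realized by curves in $F$ that need not lie in $D$, so there is no reason it should push forward along $s$.

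The paper supplies the missing idea by \emph{not} staying on $D$. It transfers a relation $\sum a_i[l_i]=0$ in $\mathrm{CH}_0(\widetilde{F})_\BQ$ (working with the Fano variety of $\widetilde{Y}$, birational to $F$) to $\mathrm{CH}_1(\widetilde{Y})_\BQ$ via the universal line, and then uses the incidence correspondence
\[
\Gamma = {p_{\CM}}_\ast {p_{\widetilde{Y}}}^\ast : \mathrm{CH}_1(\widetilde{Y}) \to \mathrm{CH}_1(\CM),
\qquad \widetilde{\CM}=\{(l,y)\in \CM\times\widetilde{Y}: y\in l\},
\]
to bring it back to $\CM$. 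The key computation is that for a line $l$ in a smooth quadric fiber, $\Gamma([l])$ is the class of the $q$-fiber over $\tau(q(l))$, where $\tau$ is the covering involution of $X \to \BP^2$. Intersecting with a relative ample class and pushing to $X$ then yields $\tau_\ast\bigl(\sum a_i[q(l_i)]\bigr)=0$, hence $\sum a_i[q(l_i)]=0$ in $\mathrm{CH}_0(X)_\BQ$. This detour through $\mathrm{CH}_1(\widetilde{Y})$ is what allows a rational equivalence taking place in all of $F$ to be converted into one on $X$; your map $s$ alone cannot see it.
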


\begin{proof}
We first prove the isomorphism (\ref{Chowiso}). By Theorem \ref{eigen}, Proposition~\ref{image1}, and the isomorphism (\ref{Choweqn}), it suffices to show the injectivity of \[r_\ast : \mathrm{CH}_0(\CM)_\BQ \rightarrow  \mathrm{CH}_0(F)_\BQ.\] 

For convenience, we work with the moduli space of lines in the $4$-fold $\widetilde{Y}$ and denote it by $\widetilde{F}$. Then $\widetilde{F}$ is birational to $F$ and $\CM$ is a divisor in $\widetilde{F}$,
\[
\widetilde{r} : \CM \hookrightarrow \widetilde{F}.
\]
Since $\mathrm{CH}_0(\widetilde{F})$ is canonically isomorphic to $\mathrm{CH}_0(F)$, we prove instead the injectivity of $\widetilde{r}_\ast : \mathrm{CH}_0(\CM)_\BQ \rightarrow \mathrm{CH}_0(\widetilde{F})_\BQ$.

The Chow group $\mathrm{CH}_0(\CM)$ is spanned by the classes of lines lying in nonsingular quadric fibers of $f: \widetilde{Y} \rightarrow \mathbb{P}^2$. Let $l_1, \ldots, l_k \in \CM$ be a collection of such lines. Assume that they satisfy a nontrivial relation
\begin{equation}\label{relation2}
\sum_{i=1}^k a_i[l_i]=0 \in \mathrm{CH}_0(\widetilde{F})_\BQ.
\end{equation}
We need to show that (\ref{relation2}) also holds in $\mathrm{CH}_0(\CM)_\BQ$. Equivalently, we will prove the relation
\begin{equation}\label{relationK3}
\sum_{i=1}^k a_i[q(l_i)]=0 \in \mathrm{CH}_0(X)_\BQ.
\end{equation}

By abuse of notation, we also write $[l_i] \in \mathrm{CH}_1(\widetilde{Y})$ for the $1$-cycle class of the line $l_i$ in $\widetilde{Y}$. Applying the natural correspondence given by the incidence variety in $\widetilde{F}\times \widetilde{Y}$, we find that the relation (\ref{relation2}) holds in $\mathrm{CH}_1(\widetilde{Y})_\BQ$.

Consider the incidence variety 
\[
\widetilde{\CM}=\{ (l, y)\in \CM \times \widetilde{Y}: y \in l \subset \widetilde{Y}\}
\]
with the projections
\[p_{\CM}: \widetilde{\CM} \rightarrow \CM \ \ \textrm{and} \ \ p_{\widetilde{Y}}: \widetilde{\CM} \rightarrow \widetilde{Y}.\]
We see from the definition that $p_{\CM}$ is a $\mathbb{P}^1$-bundle and $p_{\widetilde{Y}}$ is a double cover. There is a correspondence
\[
\Gamma = {p_{\CM}}_\ast{p_{\widetilde{Y}}}^\ast: \mathrm{CH}_1(\widetilde{Y}) \rightarrow \mathrm{CH}_1(\CM).
\]

Further, let
\[
\tau: X \rightarrow X
\]
be the involution exchanging the two sheets of the double cover $X \rightarrow \mathbb{P}^2$.

\medskip
\noindent{\bf Claim.} Let $l \in \CM$ be a line lying in a nonsingular quadric fiber~$Q$ of $f_Y: \widetilde{Y} \rightarrow \mathbb{P}^2$. Then $\Gamma([l]) \in \mathrm{CH}_1(\CM)$ is the class of the fiber of $q : \CM \to X$ over $\tau(q(l)) \in X$.

\begin{proof}[Proof of the Claim]
By construction, the preimage $p_{\widetilde{Y}}^{-1}(l)$ consists of two components $Z$ and $Z'$ as follows. One is the~locus
\[
Z = \{ (l,y) \in  \CM \times \widetilde{Y}: y \in l \} \subset \widetilde{\CM},
\]
and hence ${p_\CM}_\ast[Z] =0$ in $\mathrm{CH}_1(\CM)$ for dimension reasons. The other component $Z'$ is given by the ruling $\{l_t\}$ in $Q$ where each line $l_t$ meets $l$ at exactly one point $y_t$. It is easy to see that ${p_{\CM}}_\ast[Z']$ is the fiber over $\tau(q(l)) \in X$.
\end{proof}

The injectivity of $\widetilde{r}_\ast : \mathrm{CH}_0(\CM)_\BQ \rightarrow \mathrm{CH}_0(\widetilde{F})_\BQ$ is a direct consequence of the Claim. In fact, starting from the relation (\ref{relation2}) in $\mathrm{CH}_1(\widetilde{Y})_\BQ$, we intersect the class $\Gamma(\sum_{i=1}^k a_i[l_i])$ with a relative ample divisor of $q : \CM \rightarrow X$ and push it forward to~$X$. By the Claim above, we find the vanishing of a $0$-cycle class on $X$, which is a multiple of
\[
\tau_\ast ( \sum_{i=1}^k a_i[q(l_i)] ) \in \mathrm{CH}_0(X)_\BQ.
\]
This proves (\ref{relationK3}), since $\tau_\ast$ is an involution.

Finally, we verify the second part of the theorem. It suffices to prove that $r_\ast$ sends the class of a point on $\CM$, which represents the canonical class $[o_X]$ on $X$ via (\ref{Choweqn}), to the canonical class $[o_F]$ on $F$. 

We choose $x \in \CM$ such that $r$ is well-defined at $x$ and that $q(x) \in X$ lies on a rational curve $R \subset X$. Hence $[q(x)] = [o_X] \in \mathrm{CH}_0(X)$ by \cite{BV}. Then $x$ lies on the rational surface $q^{-1}(R)$ whose image under $r$ is a $2$-dimensional constant cycle subvariety in $F$. By \cite[Proposition 4.5]{V2}, we have
\[[r(x)] = [o_F] \in \mathrm{CH}_0(F).\qedhere\]
\end{proof}


\subsection{Proof of Theorem \ref{cubic4}}

\begin{proof}[Proof of Theorem \ref{cubic4}]
Let $Y$ be a generic cubic $4$-fold containing a plane and let $X$ be the associated $K3$ surface. The morphism
\[C_Y: \mathrm{CH}_0(F)_\BQ \rightarrow \mathrm{CH}_0(X)_\BQ\]
is constructed as the projection to the first factor in the decomposition (\ref{Chowiso}).

More concretely, for $\alpha \in \mathrm{CH}_0(F)_\BQ$, Theorem \ref{thmcubic} implies that there exists a unique $0$-cycle class $\beta \in \mathrm{CH}_0(D)_\BQ$ whose image on $F$ is the projection of $\alpha$ to 
\[
\mathrm{CH}_0(F)_{(0)} \oplus \mathrm{CH}_0(F)_{(2)}.
\]
Then $C_Y(\alpha)$ is the push-forward of $\beta$ to $\mathrm{CH}_0(X)_\BQ$ via the map $D \dashrightarrow X$.

Let $l$ and $l'$ be two lines in $Y$. By Theorem \ref{thmcubic}, we know that
\[C_Y([l]) = C_Y([l']) \in \mathrm{CH}_0(X)_\BQ\]
if and only if
\[[l]_{(2)}= [l']_{(2)} \in \mathrm{CH}_0(F)_{(2)}.\]
Part (ii) of Theorem~\ref{cubic4} then follows from Theorem \ref{thm7}.
\end{proof}

The following corollary is a byproduct of Theorems \ref{cubic4} and \ref{thmcubic}.

\begin{cor}
Let $Y$, $F$, and $X$ be as before. A line $l$ in $Y$ satisfies
\[
[l] = [o_F] \in \mathrm{CH}_0(F)
\]
if and only if
\[
C_Y([l]) \in \mathbb{Q} \cdot [o_X] \subset \mathrm{CH}_0(X)_\BQ.
\]
\end{cor}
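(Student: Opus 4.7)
The plan is to apply Theorem \ref{cubic4}(ii) with the canonical reference class $[o_F]$, after identifying $C_Y([o_F])$ with $[o_X]$ and using that $C_Y$ lands in the Beauville--Voisin decomposition in a degree-respecting way.

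For the forward direction, suppose $[l] = [o_F] \in \mathrm{CH}_0(F)$. I would first compute $C_Y([o_F])$ explicitly. The proof of Theorem \ref{thmcubic} provides a geometric realization of $[o_F]$: any point $x \in \CM$ with $q(x) \in X$ lying on a rational curve $R \subset X$ satisfies $[r(x)] = [o_F]$ (since $r(q^{-1}(R))$ is a rational, hence constant cycle, surface in $F$) and $[q(x)] = [o_X]$ by Beauville--Voisin. Since $[o_F] \in \mathrm{CH}_0(F)_{(0)} \subset \mathrm{CH}_0(F)_{(0)} \oplus \mathrm{CH}_0(F)_{(2)}$ requires no projection, the class $[x] \in \mathrm{CH}_0(\CM)$ lifts $[o_F]$ through $r_\ast$, and unwinding the construction of $C_Y$ yields $C_Y([o_F]) = q_\ast[x] = [o_X]$. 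Then Theorem \ref{cubic4}(ii) gives $C_Y([l]) = C_Y([o_F]) = [o_X] \in \BQ \cdot [o_X]$.

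For the backward direction, suppose $C_Y([l]) \in \BQ \cdot [o_X]$. The Beauville--Voisin decomposition $\mathrm{CH}_0(X)_\BQ = \mathrm{CH}_0(X)_{(0)} \oplus \mathrm{CH}_0(X)_{(2)}$, combined with the fact (from Theorem \ref{thmcubic}) that $C_Y$ kills $\mathrm{CH}_0(F)_{(4)}$ and restricts to isomorphisms on the $(0)$- and $(2)$-summands, lets me decompose
\[C_Y([l]) = C_Y([l]_{(0)}) + C_Y([l]_{(2)}) \in \mathrm{CH}_0(X)_{(0)} \oplus \mathrm{CH}_0(X)_{(2)}.\]
Since $l$ is a single point, $\deg[l] = 1$ forces $[l]_{(0)} = [o_F]$, so the $(0)$-component of $C_Y([l])$ equals $[o_X]$ by the forward computation. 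The hypothesis $C_Y([l]) \in \BQ \cdot [o_X] = \mathrm{CH}_0(X)_{(0)}$ then forces both $C_Y([l]_{(2)}) = 0$ and the coefficient of $[o_X]$ to equal $1$, giving $C_Y([l]) = [o_X] = C_Y([o_F])$. Theorem \ref{cubic4}(ii) concludes $[l] = [o_F] \in \mathrm{CH}_0(F)$.

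The main (and only) conceptual step is the identification $C_Y([o_F]) = [o_X]$, which asserts that the canonical $0$-cycle classes on $F$ and $X$ correspond under the decomposition in Theorem \ref{thmcubic}. This is already essentially contained in the final paragraph of the proof of that theorem, so no new geometric input is needed; the rest is bookkeeping with the direct sum decompositions and a direct appeal to Theorem \ref{cubic4}(ii).
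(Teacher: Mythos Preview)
Your proof is correct and is precisely the argument the paper intends: the corollary is stated without proof as ``a byproduct of Theorems \ref{cubic4} and \ref{thmcubic},'' and you have unwound those two results in the natural way. One small simplification: for the forward direction you do not need the explicit geometric computation of $C_Y([o_F])$; the second part of Theorem \ref{thmcubic} already gives $C_Y(\mathrm{CH}_0(F)_{(0)}) = \mathrm{CH}_0(X)_{(0)} = \BQ\cdot[o_X]$, and for the backward direction you can skip the degree bookkeeping and conclude directly from $C_Y([l]_{(2)})=0$ that $[l]_{(2)}=0=[o_F]_{(2)}$, then invoke Theorem \ref{thm7} (equivalently Theorem \ref{cubic4}(ii)) with any line representing $[o_F]$.
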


\section{Further questions}

\subsection{The dimension two case} \label{dimtwo}
In Section \ref{msso}, we focused on the Beauville--Voisin filtration for moduli spaces of dimension $2d(\mathbf{v})= \mathbf{v}^2+2 >2$. We discuss here the case $\mathbf{v}^2= 0$.

When $\mathbf{v} \in H^\ast(X, \BZ)$ is a primitive Mukai vector satisfying $\mathbf{v}^2=0$, and $\sigma$ is a generic stability condition, the corresponding moduli space $M= M_\sigma(\mathbf{v})$ is a $K3$ surface. Although the Beauville--Voisin filtration on $\mathrm{CH}_0(M)$ is clear by \cite{BV}, its compatibility with the filtration on $\mathbf{D}=D^b(X)$ is not obvious. 

If $M$ is a fine moduli space, then the universal family induces a derived equivalence 
\begin{equation*}\label{equivalence}
D^b(M) \xrightarrow{\sim} D^b(X).
\end{equation*}
Theorem \ref{Huybrechts2} shows that the corresponding isomorphism of Chow groups
\begin{equation*} 
\mathrm{CH}^\ast(M) \xrightarrow{\sim} \mathrm{CH}^\ast(X)
\end{equation*}
preserves the Beauville--Voisin ring. In particular, the canonical class $[o_M] \in \mathrm{CH}_0(M)$ is represented by any object $\CE \in M$ with $c_2(\CE) \in \mathbb{Z} \cdot [o_X]$. The Beauville--Voisin filtration $S_\bullet\mathrm{CH}_0(M)$ indeed comes from the restriction of the filtration $S_\bullet (\mathbf{D})$ on the derived category.

If $M$ is not a fine moduli space, then $D^b(X)$ is equivalent to a derived category of twisted sheaves on $M$,
\[
D^b(M, \alpha) \xrightarrow{\sim} D^b(X).
\]
Recently, Huybrechts showed in \cite[Corollary 2.2]{Motive} that the universal twisted family induces an isomorphism of Chow groups
\begin{equation} \label{isomorphismCH}
\mathrm{CH}^\ast(M)_\BQ \xrightarrow{\sim}  \mathrm{CH}^\ast(X)_\BQ.
\end{equation}
In this case, we also expect that  (\ref{isomorphismCH}) preserves the Beauville--Voisin ring. More generally, we ask the following question.\footnote{By Lemma \ref{Lemma1} (ii), each $\widetilde{S}_i(X)$ has the structure of a cone. Hence it makes sense to extend $\widetilde{S}_\bullet(X)$ to rational coefficients.}


\begin{question}
Does the isomorphism (\ref{isomorphismCH}) preserve O'Grady's filtration~$\widetilde{S}_\bullet$?
\end{question}

One may also ask the same question for arbitrary pairs of twisted $K3$ surfaces which are derived equivalent.

\subsection{More on the Beauville--Voisin filtration} \label{VQ}

Let $M$ be a moduli space of stable objects in $D^b(X)$ as in Section \ref{msso}. Recall the filtration $S_\bullet\mathrm{CH}_0(M)$, where $S_i\mathrm{CH}_0(M)$ is the subgroup spanned by the classes of $\mathcal{E} \in M$ satisfying $c_2(\mathcal{E}) \in S_i(X)$. The following question asks for more precision.

\begin{question} \label{vqest}
For an object $\mathcal{E} \in M$, is it true that \[[\mathcal{E}] \in S_i\mathrm{CH}_0(M)\]
if and only if
\[c_2(\mathcal{E}) \in S_i(X)?\]
\end{question}

By (the proof of) Proposition \ref{presfil}, the answer to Question \ref{vqest} is independent of birational models or modular interpretations.

Question \ref{vqest} for the Hilbert schemes of points on $X$ alone has an interesting interpretation. Let $\gamma \in \mathrm{CH}_0(X)$ be a $0$-cycle class of degree $0$. We may assume
\[\gamma = [\mathrm{Supp}(\xi)] - d[o_X]\]
for some $\xi \in X^{[d]}$ with $d$ sufficiently large.

By an explicit calculation via the motivic decomposition of $X^{[d]}$, we have
\[[\xi] \in S_i\mathrm{CH}_0(X^{[d]})\]
if and only if
\[\gamma^{\times (i + 1)} = 0 \in \mathrm{CH}_0(X^{i + 1}).\]
A positive answer to Question \ref{vqest} for $X^{[d]}$ is then equivalent to the statement~that
\[\gamma \in S_i(X)\]
if and only if
\[\gamma^{\times (i + 1)} = 0 \in \mathrm{CH}_0(X^{i + 1}).\]
The latter is a new characterization of O'Grady's filtration $S_\bullet(X)$ proposed by Voisin.

\end{document}